\title[Multiplication polynomials for EC over finite local rings]{Multiplication polynomials for elliptic curves over finite local rings}
\author{Riccardo Invernizzi}
\address{Department of Computer Science, \\
	KU Leuven\\
	Celestijnenlaan 200A, Leuven, Belgium}
\thanks{R.I. was supported by IUSS Pavia}
\email[R.~Invernizzi]{riccardo.invernizzi@kuleuven.be}
\urladdr[R. Invernizzi]{https://orcid.org/0000-0002-2271-6822} 
\author{Daniele Taufer}
\address{Department of Computer Science, \\
	KU Leuven\\
	Celestijnenlaan 200A, Leuven, Belgium}
\thanks{D.T. was supported by the Research Foundation - Flanders (FWO), project 12ZZC23N}
\email[D.~Taufer]{daniele.taufer@kuleuven.be}
\urladdr[D. Taufer]{https://orcid.org/0000-0003-3402-4863}
\begin{document}

\keywords{Elliptic curve, local finite ring, points at infinity, addition law, multiplication polynomials}
\subjclass[2020]{Primary 11G07; Secondary 11T55, 11C08, 13B25}

\begin{abstract}
	For a given elliptic curve $E$ over a finite local ring, we denote by $E^{\infty}$ its subgroup at infinity.
	Every point $P \in E^{\infty}$ can be described solely in terms of its $x$-coordinate $P_x$, which can be therefore used to parameterize all its multiples $nP$.
	We refer to the coefficient of $(P_x)^i$ in the parameterization of $(nP)_x$ as the $i$-th multiplication polynomial.
	We show that this coefficient is a degree-$i$ rational polynomial without a constant term in $n$.
	We also prove that no primes greater than $i$ may appear in the denominators of its terms. 
	As a consequence, for every finite field $\F_q$ and any $k\in\N^*$, we prescribe the group structure of a generic elliptic curve defined over $\F_q[X]/(X^k)$, and we show that their ECDLP on $E^{\infty}$ may be efficiently solved.
\end{abstract}
\maketitle

\section{Introduction}
Elliptic curves are fascinating objects that have been attracting considerable attention from several different fields, such as number theory \cite{Fermat} and algebraic cryptography \cite{Miller, Koblitz}.
One of the key features of these objects is the fact that they have been proven to be abelian varieties, and as such, they are naturally endowed with a group structure.

Remarkably, the points of such groups can be efficiently handled, but their algebra may almost never be read from their coordinate representation.
Thence, the entries of the multiples of a given point usually look random and therefore provide no information about the underlying group operation.
This feature has been heavily exploited to design discrete logarithm-based cryptosystems, such as key agreement \cite{DH}, signature schemes \cite{sign}, and pseudorandom number generators \cite{Shparlinski}.

However, point multiplication may be read from point coordinates in a few cases.
For instance, the algebra on the group at infinity of non-canonical lifting of anomalous curves has been employed for efficiently solving the discrete logarithm problem on these curves \cite{sala20znz,SatohAraki,Smart}.

In this work, we adopt a novel approach to address the group at infinity $E^{\infty}$ of elliptic curves $E$ defined over finite local rings $(\rcal, \m)$, which include a wide portion of curves with practical interest \cite{sala23surv}.
First, we provide an efficient description of the addition law for these points (Proposition \ref{prop:shortsum}), and we show that every point $P \in E^{\infty}$ may be represented as $P = \big(X:1:\ff(X)\big)$, for a prescribed polynomial $\ff \in \rcal[x]$ (Proposition \ref{prop:zfx}).
Therefore, one can symbolically compute the $n$-th multiple of $P$ as $nP = \big( (nP)_x : 1 : \ff((nP)_x) \big)$.
We define the \emph{multiplication polynomials} $\psi_i$ as the maps sending $n \in \N$ to the coefficient of $X^i$ in $(nP)_x$, namely for every $n \in \N$ we have
\[ (nP)_x = \psi_1(n) X +  \psi_2(n) X^2 + \dots +  \psi_{k-1}(n) X^{k-1}, \]
where $k$ is the minimal integer such that $\m^k = (0)$.
These objects are, a priori, just functions of $n$.
However, we prove that every $\psi_i(n)$ is actually a polynomial of degree $i$ over the rationals and the curve coefficients, and that we have $n | \psi_i(n)$ (Theorem \ref{teo:psi_poli}).
Furthermore, we show that the prime divisors appearing in the denominators of $\psi_i(n)$ may never be larger than $i$ (Theorem \ref{teo:psi_int}).

These facts prescribe general arithmetic properties of the scalar multiplication in $E^{\infty}$, especially when the considered scalar is the characteristic of the residue field $\rcal/\m$ (Corollary \ref{cor:psi_i_zero}).
We present an application of these results for determining the group structure of elliptic curves arising over $\rcal = \F_q[x]/(x^k)$.
This was an open problem \cite[Section 11]{sala23surv}, which we completely solve for \emph{generic} elliptic curves, namely all elliptic curves but those satisfying special conditions (Theorem \ref{teo:grp_struct_c1} and Corollary \ref{cor:grp_struct}).

We also discuss the structure of $E^\infty$ in the remaining cases, providing their classification under three broad conditions on the curve coefficients (Theorem \ref{teo:grp_struct_c1}).
We prove that these conditions always hold for rings of characteristic $2$ or $3$, and we computationally verify them for all the elliptic curves within the reach of our calculators.

Finally, in Section \ref{subs:ecdlp} we observe that solving the elliptic curve discrete logarithm problem over these special rings is not substantially harder than the same problem over their residue fields, and discuss the potential cryptographic implications.

\subsection{Paper organization}
In Section \ref{sec:Notation}, we recall the known definitions and results that we employ in the paper.
Efficient computation of the addition law is presented in Section \ref{sec:Addition}, while the standard form of points at infinity is presented in Section \ref{sec:pointsO}.
Section \ref{sec:MultPoly} is devoted to the definition of multiplication polynomials and to establishing their main properties.
In Section \ref{sec:GrpStructure}, the previous results are applied to determine the group of elliptic curves over $\F_q[x]/(x^k)$.
Symbolic verification and computational tests can be found at \cite{repo}.
\section{Notation and standard results} \label{sec:Notation}
Let $\rcal$ be a finite local ring, whose maximal ideal will be denoted by $\m$.
Since it is finite, its residue field $\rcal / \m \simeq \F_q$ is a finite field, and its size $\#\rcal$ is a power of $q$.
Moreover, there is $k \in \N$ such that $\m^k = 0$.
The minimal such $k$ will be then referred to as the nilpotence degree of $\rcal$.
Hence, every element $r \in \m$ is nilpotent, while $\rcal \backslash \m = \rcal^*$, i.e. every non-nilpotent element is invertible. 

The projective plane over $\rcal$ is the set of classes $(X:Y:Z)$ representing primitive triples $(X,Y,Z)$ modulo the action of $\rcal^*$ given by the component-wise multiplication.
In other terms, the elements of $\P^2(\rcal)$ are the projective points $(X:Y:Z)$ with $\langle X,Y,Z \rangle = \langle 1 \rangle = \rcal$, identified by the equivalence relation
\[\begin{gathered}
    (X_1:Y_1:Z_1)=(X_2:Y_2:Z_2) \ \textnormal{ if and only if} \\
    X_1Y_2-X_2Y_1 = X_1Z_2-X_2Z_1 = Y_1Z_2-Y_2Z_1 = 0.
\end{gathered}\]

An elliptic curve $E$ over $\rcal$ is the set of plane projective points satisfying a non-singular Weierstrass equation over $\rcal$, namely
\[ y^2z + a_1xyz + a_3yz^2 = x^3 + a_2x^2z + a_4xz^2 + a_6z^3. \]
The non-singularity condition amounts to having an invertible prescribed polynomial combination $\Dt_E$ of the coefficients $a_i$, i.e. $\Dt_E \in \rcal^*$.
The precise definition of such $\Dt_E$ may be found in \cite[Section III.1]{silverman09}, with the following minor correction: 
\begin{itemize}
    \item $b_2 = a_1^2 + 4a_2$ instead of $b_2 = a_1^2 + 4a_4$.
\end{itemize}

These objects are known to have a group structure defined via the bihomogeneous polynomials of bidegree $(2,2)$ explicitly given in \cite{bosma95}, modulo the corrections, reported by \cite{bern2011}, of two minor typos:
\begin{itemize}
    \item in $X_3^{(2)}$, write the term $a_3a_4(-2X_1Z_2 - X_2Z_1)X_2Z_1$ in place of $a_3a_4(X_1Z_2 - 2X_2Z_1)X_2Z_1$,
    \item in $Y_3^{(2)}$, write $-(3a_2a_6 - a_4^2)(-2X_1Z_2 - X_2Z_1)X_2Z_1$ instead of $-(3a_2a_6 - a_4^2)(X_1Z_2 + X_2Z_1)(X_1Z_2 - X_2Z1)$.
\end{itemize}
We will denote the law associated to the point $(0:0:1)$ as $+_{(0:0:1)}$, and the one associated to $(0:1:0)$ as $+_{(0:1:0)}$. 
We recall that a pair of points is exceptional for $+_{(0:0:1)}$ if and only if the $z$-coordinate of the sum is a zero divisor, while it is exceptional for $+_{(0:1:0)}$ if and only if the $y$-coordinate of the sum is a zero divisor. 
Therefore, $+_{(0:0:1)}$ and $+_{(0:1:0)}$ form a complete system of addition laws for any elliptic curve, whose combined action will be simply denoted as $+$.
The identity of these groups is $\ocal = (0:1:0)$, and the inverse of a given point $(X:Y:Z)$ is given by
\[ -(X:Y:Z) = (X:-Y-a_1X-a_3Z:Z). \]

Given a point $P = (X:Y:Z) \in \P(\rcal)$ of an elliptic curve defined over $\rcal$, it may be uniquely represented as
\[P = \begin{cases} (X \cdot Z^{-1} : Y \cdot Z^{-1} : 1), & \textnormal{if } Z \in \rcal^*, \\
(X \cdot Y^{-1} : 1 : Z \cdot Y^{-1}), & \textnormal{otherwise.}
\end{cases}\]
The points that admit a representation of the first type are called \emph{affine}, while the others are called \emph{at infinity}.
Those points lie above the affine (resp. at infinity) points of the underlying curve defined over the residue field $\F_q \simeq \rcal / \m$, via the componentwise canonical projection
\begin{equation*}
    \pi : E(R) \rightarrow E(\F_q), \quad (X:Y:Z) \mapsto ([X] : [Y] : [Z]).
\end{equation*}
It is well known that since the addition laws are polynomial in the coordinates of the points, then $\pi$ is a surjective group homomorphism \cite[Sec. 4]{lenstra86}. 
The group of points at infinity of an elliptic curve $E$ is its subgroup denoted by $E^\infty = \pi^{-1}(\ocal)$.
 \section{Addition Law} \label{sec:Addition}
Computing the point addition on elliptic curves defined over a ring usually requires computing a valid linear combination of the triples obtained by a complete system of addition laws \cite[Sec. 3]{lenstra86}.
However, over a finite local ring $\rcal$ this computation is simpler, as the sum is always directly computed by $+_{(0:0:1)}$ or by $+_{(0:1:0)}$.
\begin{proposition}
    Let $P_1, P_2$ two points of an elliptic curve defined over $\rcal$. Then $P_1+P_2$ is always computed by $P_1 +_{(0:0:1)} P_2$ or by $P_1 +_{(0:1:0)} P_2$.
\end{proposition}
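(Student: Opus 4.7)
The plan is to argue by contradiction, exploiting the dichotomy between units and zero divisors in a finite local ring. Recall that since $\rcal$ is a finite local ring, every element lies in $\rcal^* \sqcup \m$, and an element is a zero divisor exactly when it belongs to $\m$ (non-units are nilpotent). I would therefore translate the hypotheses on exceptionality directly into membership in $\m$.

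Assume that both $+_{(0:0:1)}$ and $+_{(0:1:0)}$ are exceptional on the pair $(P_1, P_2)$. By the characterizations recalled just before the statement, this means that both the $y$- and the $z$-coordinate of $S := P_1 + P_2$ are zero divisors, hence both lie in $\m$. Now the point $S = (X_S : Y_S : Z_S)$ is an element of $\P^2(\rcal)$, so it admits a primitive representative $\langle X_S, Y_S, Z_S \rangle = \rcal$. Since $Y_S, Z_S \in \m$, the only way to generate the unit ideal is to have $X_S \in \rcal^*$.

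The core step is then to plug such a candidate triple into the Weierstrass equation
\[ Y_S^2 Z_S + a_1 X_S Y_S Z_S + a_3 Y_S Z_S^2 = X_S^3 + a_2 X_S^2 Z_S + a_4 X_S Z_S^2 + a_6 Z_S^3 \]
and compare both sides modulo $\m$. Every term on the left contains either $Z_S$ or $Y_S^2$, both of which lie in $\m$; hence the left-hand side reduces to $0$ in $\rcal/\m$. On the right-hand side, all terms except $X_S^3$ carry a factor of $Z_S \in \m$, so the right-hand side reduces to $X_S^3$, which is a unit since $X_S$ is. This yields $X_S^3 \equiv 0 \pmod{\m}$ with $X_S \in \rcal^*$, a contradiction.

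The argument is essentially a one-line ring-theoretic observation once one has the correct characterization of the exceptional locus; I expect no real obstacle beyond making sure that the two given characterizations of exceptionality (from the recollection preceding the proposition) are applied correctly, so that the contradiction is drawn on $S$ rather than on one of the summands. A brief remark that the quoted facts about $+_{(0:0:1)}$ and $+_{(0:1:0)}$ being complete (so that the sum is well defined and the two exceptional conditions make sense simultaneously) should make the proof self-contained.
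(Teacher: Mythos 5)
Your proof is correct, but it follows a genuinely different route from the paper's. The paper argues directly from completeness: by \cite[Sec. 3]{lenstra86}, the entries of the two output triples $P_1 +_{(0:0:1)} P_2$ and $P_1 +_{(0:1:0)} P_2$ generate the unit ideal, so over a local ring at least one entry lies outside $\m$, hence is a unit, and that triple is already a valid (primitive) projective point. You instead take the recalled coordinate characterization of the exceptional loci at face value and show that these loci are disjoint over $\rcal$: if both laws were exceptional at $(P_1,P_2)$, the sum $S$ would satisfy $Y_S, Z_S \in \m$, hence $X_S \in \rcal^*$ by primitivity, and reducing the Weierstrass equation modulo $\m$ forces the unit $X_S^3$ to be congruent to $0$ --- equivalently, $\pi(S)$ would be $(1:0:0)$, which lies on no Weierstrass curve. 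Both arguments are sound; yours is more self-contained and explains geometrically \emph{why} completeness suffices here (no point of $E$ reduces to $(1:0:0)$), while the paper's is shorter because it delegates the work to Lenstra's notion of a complete system. The only things to make explicit in your write-up are the two facts you implicitly rely on: that $S = P_1+P_2$ is a well-defined point of $E(\rcal)$ prior to knowing which single law computes it (this is exactly what the combined action of the complete system guarantees), and that in a finite local ring zero divisors are precisely the non-units, so "zero divisor" can be traded for "element of $\m$".
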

\begin{proof}
    Since these addition laws form a complete system, thanks to \cite[Sec. 3]{lenstra86} at least one between $P_1 +_{(0:0:1)} P_2$ and $P_1 +_{(0:1:0)} P_2$ is guaranteed to contain a non-nilpotent entry. Since non-nilpotent elements of $\rcal$ are invertible, then one entry is a unit and therefore we have a valid projective point.
\end{proof}

\begin{notation*} From now on, we will denote the points symbolically, namely their coordinates will be regarded as variables instead of elements of $\rcal$.
With this slight abuse of notation, we shorten and simplify the statements, and every result we prove holds regardless of the specific point one starts from.
As an instance, given a point $P = (X:Y:Z) \in \P^2(\rcal)$, we will consider $XYZ$ as a degree-$3$ polynomial rather than an element of $\rcal$.
\end{notation*}

The following proposition gives an elegant and efficient way of computing $+_{(0:1:0)}$.
\begin{proposition}
    \label{prop:shortsum}
    For $i \in \{1,2\}$, let $P_i = (X_i:Y_i:Z_i) \in \P^2(\rcal)$ be two projective points. Let also
    \[ (X_3,Y_3,Z_3) = P_1 +_{(0:1:0)} P_2, \]
    and
    \begin{equation*}
        \begin{gathered}
            g_1 = X_2(a_1X_1 + a_3Z_1 + Y_1) + X_1Y_2, \\
            g_2 = Z_2(a_1X_1 + a_3Z_1 + Y_1) + Z_1Y_2.
        \end{gathered}
    \end{equation*}
    Then there exist four bihomogenous polynomials 
    \[ H_1,\dots,H_4 \in \rcal[X_i,Y_i,Z_i]_{i \in \{1,2\}} \]
    of bidegree $(1,1)$ such that
    \begin{equation*}
        \begin{gathered}
            X_3 = g_1H_1 + g_2H_2,\\
            Z_3 = g_1H_3 + g_2H_4,\\
            Y_3 = H_1H_4 - H_2H_3.
        \end{gathered}
    \end{equation*}
\end{proposition}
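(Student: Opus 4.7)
The plan is to verify the three identities by direct computation, starting from the explicit Bosma--Lenstra formulas for $+_{(0:1:0)}$ (with the corrections recorded in the introduction) and exploiting a geometric interpretation of $g_1$ and $g_2$.

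First I would decode the geometric role of $g_1, g_2$. Setting $\tilde{Y}_1 := Y_1 + a_1 X_1 + a_3 Z_1$, so that $-P_1 = (X_1 : -\tilde{Y}_1 : Z_1)$ projectively, both $g_1 = X_2 \tilde{Y}_1 + X_1 Y_2$ and $g_2 = Z_2 \tilde{Y}_1 + Z_1 Y_2$ are bilinear in the pair $(\tilde{Y}_1, Y_2)$ with coefficient matrix $\bigl(\begin{smallmatrix} X_2 & X_1 \\ Z_2 & Z_1 \end{smallmatrix}\bigr)$. A short computation shows that in affine coordinates ($Z_1=Z_2=1$) the line through $-P_1$ and $P_2$ has equation $(X_2 - X_1)\, y = g_2\, x - g_1$, so $g_1$ and $g_2$ encode projectively the intercept and slope of this line. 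This motivates the decomposition that follows.

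Next, I would write each $H_i$ as a generic $(1,1)$-bihomogeneous polynomial with undetermined coefficients, and match monomial coefficients against the Bosma--Lenstra expressions of $X_3$ and $Z_3$ in the relations $X_3 = g_1 H_1 + g_2 H_2$ and $Z_3 = g_1 H_3 + g_2 H_4$. Since $g_1, g_2$ are themselves $(1,1)$-bihomogeneous, these become a finite linear system over $\rcal$, which I expect to solve after reducing modulo the Weierstrass equations for $P_1$ and $P_2$. This determines the $H_i$ only up to a syzygy of $(g_1, g_2)$, so at this stage there is genuine freedom in the choice of representative.

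The principal obstacle is the third identity $Y_3 = H_1 H_4 - H_2 H_3$. Being quadratic in the $H_i$, it is \emph{not} implied by the previous two; rather, it selects a specific syzygy-equivalent representative. Verifying this determinantal identity for the $H_i$ extracted above amounts to a large but routine polynomial expansion, best carried out with computer algebra and closed by invoking the Weierstrass relations of $P_1$ and $P_2$. I would cross-check the resulting identity against the symbolic verification in the repository \cite{repo}.
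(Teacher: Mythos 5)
Your proposal is essentially the paper's proof: the paper simply exhibits the four polynomials $H_1,\dots,H_4$ explicitly (Proposition \ref{prop:shortsum_calc}) and checks the three identities symbolically in Magma, which is precisely the direct computation you describe; your undetermined-coefficients search and the geometric reading of $g_1,g_2$ as the line through $-P_1$ and $P_2$ are just a (reasonable) method for discovering those $H_i$, and you correctly note that the determinantal identity for $Y_3$ is the constraint that singles out the right representative modulo syzygies of $(g_1,g_2)$. One caution: the proposition is stated for arbitrary points of $\P^2(\rcal)$, not points of $E$, so you may not reduce modulo the Weierstrass equations of $P_1$ and $P_2$ --- the identities must hold in the free polynomial ring, as they do for the paper's explicit $H_i$. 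Fortunately this costs you nothing: every quantity involved ($X_3$, $Y_3$, $Z_3$, $g_iH_j$, and $H_1H_4-H_2H_3$) is bihomogeneous of bidegree $(2,2)$, while the two Weierstrass cubics have bidegrees $(3,0)$ and $(0,3)$, so no nonzero bidegree-$(2,2)$ polynomial lies in the ideal they generate; hence any identity that holds modulo those relations in this bidegree already holds exactly.
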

\begin{proof}
    Straightforward computation. The explicit polynomials $H_i$ and the actual formal verification may be found in the Appendix (Proposition \ref{prop:shortsum_calc}).
\end{proof}
\begin{remark}
    As we will show shortly, in this work we will always consider points that are not exceptional for the second addition law. These reduced formulas will be of great help both in terms of understanding the sum and computational speed.
\end{remark}

\begin{remark}
    Proposition \ref{prop:shortsum} is the natural generalization of \cite[Lemma 2.1]{salataufer} and works for every elliptic curve with an extended Weierstrass model over any admissible ring, regardless of the characteristic.
\end{remark}

\section{Points over \texorpdfstring{$\ocal$}{O}} \label{sec:pointsO}
In this section, we describe a convenient way of representing points in $E^{\infty}$.
We will establish results that hold symbolically for all such points, i.e. they hold for every specialization of their entries in $\rcal$.
\begin{remark}
    \label{rmk:use+2}
    Let $P = (X:Y:Z) \in E^{\infty}$. Since by definition $\pi(P) = (0:1:0)$, its standard form will always be $P = (X:1:Z)$ with $X,Z \in \m$. This also implies that $+_{(0:1:0)}$ is always valid over $E^{\infty}$. From now when we add two points we are implicitly using this addition law.
\end{remark}

\begin{notation*}
Given a projective point $P \in \P^2(\rcal)$ at infinity, we will denote by $P_x$ (resp. $P_z$) the $x$-coordinate (resp. $z$-coordinate) of its standard form $(P_x:1:P_z)$.
\end{notation*}

\begin{prop}
    \label{prop:zfx}
    Let $E$ be an elliptic curve over $\rcal$.
    There is a polynomial $\ff \in \rcal[x]$ of degree strictly lower than the nilpotence degree of $\rcal$, such that for every $P \in E^{\infty}$ we have $P = \big(P_x:1:\ff(P_x)\big)$.
    Moreover, $x^3 | \ff(x)$.
\end{prop}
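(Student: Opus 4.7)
The plan is to construct $\ff$ by iterated substitution directly from the Weierstrass equation. Setting $Y = 1$ in the curve equation and isolating the linear term in $Z$ yields
\[ Z = X^3 - a_1 XZ + a_2 X^2 Z - a_3 Z^2 + a_4 XZ^2 + a_6 Z^3. \]
For every point $P = (P_x : 1 : P_z) \in E^\infty$, both $P_x$ and $P_z$ lie in $\m$, and the right-hand side treats $Z$ essentially as an error term that is killed by one more factor from $\m$ at each substitution. First I would define polynomials $\ff_n \in \rcal[x]$ by $\ff_0 = 0$ and
\[ \ff_n(x) \;=\; x^3 - a_1 x\,\ff_{n-1}(x) + a_2 x^2\,\ff_{n-1}(x) - a_3 \ff_{n-1}(x)^2 + a_4 x\,\ff_{n-1}(x)^2 + a_6 \ff_{n-1}(x)^3. \]

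Next, I would show by induction on $n$ that $\ff_n(P_x) - P_z \in \m^{n+1}$ for every $P \in E^\infty$. The base case is immediate since $P_z \in \m$. For the inductive step, subtracting the Weierstrass identity above from the defining recursion gives
\[ \ff_n(P_x) - P_z \;=\; \bigl(\ff_{n-1}(P_x) - P_z\bigr)\cdot \Lambda_n, \]
where $\Lambda_n$ is obtained using the elementary factorizations $A^2 - B^2 = (A+B)(A-B)$ and $A^3 - B^3 = (A^2 + AB + B^2)(A-B)$ applied to $A = \ff_{n-1}(P_x)$ and $B = P_z$. Since $P_x, \ff_{n-1}(P_x), P_z \in \m$, the factor $\Lambda_n$ lies in $\m$, closing the induction. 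Choosing $n = k-1$ gives $\ff_{k-1}(P_x) = P_z$ exactly, because $\m^k = (0)$.

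To produce a polynomial of degree strictly less than $k$, I would truncate $\ff_{k-1}$ modulo $x^k$: since $P_x \in \m$ implies $P_x^j = 0$ for all $j \geq k$, the truncation $\ff$ satisfies $\ff(P_x) = \ff_{k-1}(P_x) = P_z$ on all of $E^\infty$. Finally, the divisibility $x^3 \mid \ff$ follows by a separate straightforward induction on the recursion: $\ff_1(x) = x^3$, and if $x^3 \mid \ff_{n-1}$ then every term of the recursion for $\ff_n$ is divisible by $x^3$ (either through the leading $x^3$ or through $\ff_{n-1}$ itself), so the property is preserved and survives truncation.

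The only delicate step is the factorization in the inductive step, which hinges on grouping the six terms so that the difference $\ff_n(P_x) - P_z$ visibly collects $(\ff_{n-1}(P_x) - P_z)$; the rest is routine bookkeeping on nilpotence orders, and the existence of $\ff$ of the required form follows without needing to address uniqueness or surjectivity of the map $P \mapsto P_x$.
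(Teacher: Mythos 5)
Your proof is correct and follows essentially the same route as the paper: both construct $\ff$ by iterating the substitution $z \mapsto x^3 - a_1xz + a_2x^2z - a_3z^2 + a_4xz^2 + a_6z^3$ and use nilpotence of $P_x, P_z \in \m$ to terminate after finitely many steps. Your explicit factorization $\ff_n(P_x) - P_z = (\ff_{n-1}(P_x)-P_z)\Lambda_n$ with $\Lambda_n \in \m$ makes the convergence argument slightly more rigorous than the paper's monomial-degree count, but it is the same idea.
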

\begin{proof}
    This is the same idea of \cite[Prop. 11]{sala20znz}. Every point in $E^{\infty}$ satisfies
    $$z = x^3 -a_1xz + a_2x^2z - a_3z^2 + a_4xz^2 + a_6z^3,$$
    or $z = \ff(x, z)$. We can hence replace $z$ with $\ff(x, z)$ on the right side obtaining
    $$z = \ff\big(x, \ff(x, ... \ff(x,z) ... )\big).$$
    In this way the degree in $\rcal[x,z]$ of every monomial containing $z$ increases every time, and since in this ring $z^k = x^k = 0$ (because $P_x,P_z \in \m$), after a finite number of substitutions we are left with $z = \ff(x)$.
    The computation of the explicit expression of $\ff$ truncated to small exponents can be found in the Appendix (Proposition \ref{prop:zfx_calc}), from which one easily observes that $x^3 | \ff(x)$.
\end{proof}
By Proposition \ref{prop:zfx} we see that a generic point $P \in E^{\infty}$ is entirely determined by $P_x$ and the coefficients of the Weierstrass equation of $E$.
Up to now, we assumed to be working with a fixed curve $E$. However, since the statement of Proposition \ref{prop:zfx} holds independently from the chosen curve $E$, we can let $E$ and hence its coefficients $a_i$ vary.
In this way, we get the multivariate function 
\[ z = \ff(x, a_i) \in \F_q[a_1, \dots, a_6][x]. \]
\begin{lemma}
    \label{lem:y_inv}
    Given any curve $E$ over $\rcal$ and three points $P,Q,R \in E^\infty$ such that $P+Q=R$, we have 
    \[ R_x \in \langle P_x, Q_x \rangle \subset \Z[a_1,\dots,a_6][P_x,Q_x].\]
\end{lemma}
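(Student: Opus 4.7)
The plan is to combine the compact expression for $+_{(0:1:0)}$ from Proposition \ref{prop:shortsum}---the relevant addition law here by Remark \ref{rmk:use+2}---with the parameterization $P_z = \ff(P_x)$, $Q_z = \ff(Q_x)$ supplied by Proposition \ref{prop:zfx}. The decisive structural fact is the divisibility $x^3 \mid \ff(x)$, which immediately places both $P_z$ and $Q_z$ inside the target ideal $\langle P_x, Q_x \rangle$; I would then track this ideal through the addition formula.

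Setting $Y_1 = Y_2 = 1$ in the definitions of $g_1, g_2$ from Proposition \ref{prop:shortsum} yields
\[
    g_1 = P_x + Q_x + a_1 P_x Q_x + a_3 P_z Q_x, \qquad g_2 = P_z + Q_z + a_1 P_x Q_z + a_3 P_z Q_z,
\]
each of whose monomials visibly contains a factor from $\{P_x, Q_x, P_z, Q_z\} \subset \langle P_x, Q_x \rangle$. Since $X_3 = g_1 H_1 + g_2 H_2$ with $H_1, H_2$ polynomials in the coordinates, it follows at once that $X_3 \in \langle P_x, Q_x \rangle$.

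To pass from the projective representative $(X_3 : Y_3 : Z_3)$ of $R$ to its standard form $(R_x : 1 : R_z)$, I would divide by $Y_3$. Because $R$ admits a standard form, $Y_3$ is forced to be a unit; more precisely, the explicit $H_i$ of Proposition \ref{prop:shortsum_calc} in the appendix---or equivalently the projective identity $\ocal +_{(0:1:0)} \ocal = \ocal$---show that the constant term of $Y_3$ at $P_x = Q_x = P_z = Q_z = 0$ and $Y_1 = Y_2 = 1$ equals $1$. Hence $Y_3 = 1 + y$ for some $y \in \langle P_x, Q_x \rangle$, and since this ideal lies inside the nilpotent $\m$, the inverse $Y_3^{-1} = \sum_{i \geq 0} (-y)^i$ exists as a finite sum with polynomial entries. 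Multiplying $X_3 \in \langle P_x, Q_x \rangle$ by $Y_3^{-1}$ preserves the ideal, giving $R_x = X_3 Y_3^{-1} \in \langle P_x, Q_x \rangle$, as desired.

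The only non-routine step will be certifying that the constant term of $Y_3$ is a unit: while this is geometrically evident from $\ocal + \ocal = \ocal$, a formal certificate requires inspecting the explicit bihomogeneous $H_i$ of Proposition \ref{prop:shortsum_calc}. Every remaining step is direct ideal arithmetic based on the degree and divisibility conditions established in Propositions \ref{prop:shortsum} and \ref{prop:zfx}.
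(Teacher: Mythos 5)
Your proposal is correct and takes essentially the same route as the paper: a direct inspection of the explicit addition formulae of Proposition \ref{prop:shortsum} (spelled out in Proposition \ref{prop:sommemodulop2} of the appendix), using $x^3 \mid \ff(x)$ to place $P_z, Q_z$ inside $\langle P_x, Q_x \rangle$ and inverting $Y_3 = 1 + (\text{nilpotent})$ to pass to the standard form. The only cosmetic difference is that the paper first reduces modulo the coarser ideal $I_P = \langle P_x^2, P_z \rangle$ and then notes that $I_P$ and the surviving terms all lie in $\langle P_x, Q_x \rangle$, whereas you track the ideal $\langle P_x, Q_x \rangle$ directly from the start.
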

\begin{proof}
    It follows from a direct inspection of the addition formulae. Further details can be found in Proposition \ref{prop:sommemodulop2}, applied with $P_1=P$, $P_2=Q$ and $P_3 = R$. With that notation, both $I_P$ and all the other terms of $R_x$ are clearly contained in $\langle P_x, Q_x \rangle$.
\end{proof}
\begin{remark}
    \label{rmk:imultiplidipsonobelli}
    When both $P$ and $Q$ are a multiple of a same point $(X:1:Z)$, by Lemma \ref{lem:y_inv} we have $R_x \in \langle X \rangle$.
\end{remark}

\section{Multiplication Polynomials} \label{sec:MultPoly}
\begin{lemma}
    \label{lem:psi_esun}
    Given an elliptic curve $E$ over $\rcal$, for every $n \in \N$ there are uniquely defined coefficients $\psi_1(n),...,\psi_{k-1}(n) \in \rcal$ such that for every symbolic $P = \big(X:1:\ff(X)\big) \in E^\infty$ we have
    \[ (nP)_x = \Sum_{i=1}^{k-1} \psi_i(n)X^i. \]
\end{lemma}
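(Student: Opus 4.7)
The plan is to induct on $n$, with uniqueness being automatic from the symbolic framework. Since every $X = P_x$ lies in $\m$ and $\m^k = 0$, all manipulations naturally take place in the truncated polynomial ring $\rcal[X]/(X^k)$, whose elements admit unique expansions $\sum_{i=0}^{k-1} c_i X^i$; this immediately forces uniqueness of $\psi_1(n), \dots, \psi_{k-1}(n) \in \rcal$, so the real work is existence.

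For the base cases, $n = 0$ gives $0 \cdot P = \ocal = (0:1:0)$, whose standard form has $x$-coordinate $0$ (so $\psi_i(0) = 0$ for all $i$), and $n = 1$ gives $(1 \cdot P)_x = X$ trivially. For the inductive step, assuming $(nP)_x = \sum_{i=1}^{k-1} \psi_i(n) X^i$, I would compute $(n+1)P = nP +_{(0:1:0)} P$, which is a valid addition law on $E^\infty$ by Remark \ref{rmk:use+2}. Since both summands are multiples of the symbolic point $\big(X:1:\ff(X)\big)$, Remark \ref{rmk:imultiplidipsonobelli} yields $((n+1)P)_x \in \langle X \rangle$, i.e.\ no constant term. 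Substituting the inductive expression for $(nP)_x$ into the addition formula of Proposition \ref{prop:shortsum} and truncating modulo $X^k$ then produces the desired coefficients $\psi_i(n+1) \in \rcal$.

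The main delicate point, I expect, is checking that $((n+1)P)_x$ is genuinely polynomial in $X$ rather than merely rational. The numerator $X_3$ produced by Proposition \ref{prop:shortsum} is polynomial in the summands' coordinates; passing to standard form requires dividing by $Y_3$, which is a unit in $\rcal$ precisely because $(n+1)P$ still lies in $E^\infty$ and thus admits a standard form. The inverse $1/Y_3$ can then be expanded as a geometric series that terminates modulo $X^k$, so $X_3/Y_3$ remains polynomial in $X$ of degree at most $k-1$, and the induction closes.
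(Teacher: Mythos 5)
Your proposal is correct and follows essentially the same route as the paper: existence comes from inspecting the addition law to see that the $x$-coordinate of a sum of multiples of $P$ lies in $\langle X\rangle$ (Lemma \ref{lem:y_inv} and Remark \ref{rmk:imultiplidipsonobelli}), and uniqueness from the fact that $1,X,\dots,X^{k-1}$ form a basis. You merely make explicit two steps the paper leaves implicit, namely the induction on $n$ and the terminating expansion of $1/Y_3$ as a unit congruent to $1$ modulo $X$; both are sound.
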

\begin{proof}
    From Lemma \ref{lem:y_inv} we know that $(nP)_x$ is a polynomial function of $X$ without constant term, which proves the existence. 
    As for uniqueness, let us assume that we can also write
    $$ (nP)_x = \Sum_{i=1}^{k-1} \varphi_i(n)X^i.$$
    This implies
    $$ 0 = \Sum_{i=1}^{k-1} \left[ \psi_i(n)-\varphi_i(n)\right]X^i$$
    and since the $X^i$ are a basis for polynomials in $X$, this shows that $\psi_i(n)=\varphi_i(n)$ for every $1 \leq i \leq k-1$.
\end{proof}

\begin{remark}
	The coefficients $\psi_i(n)$ depend on the coefficients $a_i$ of the given elliptic curve, therefore they may also be regarded as functions $\psi_i(n,a_i)$.
\end{remark}

\begin{defi}
	\label{def:psi}
	For every $1 \leq i \leq k-1$ we define the \emph{$i$-th multiplication polynomial} $\psi_i$ as the unique function over $\N$ such that $\psi_i(n)$ is the coefficient of $X^i$ in $(nP)_x$, as determined in Lemma \ref{lem:psi_esun}.
\end{defi}
At this stage, it may not be clear that they are actual polynomials, as it will be proved in Theorem \ref{teo:psi_poli}.
\begin{remark} \label{rmk:psii1}
	By definition, it holds $\psi_i(1) = 0$ for all $i\geq 2$.
\end{remark}
\begin{remark}
	\label{rem:deg_influence}
	Since the addition law is polynomial in the entries of the addenda, computing the coefficient of $X^i$ in $(nP)_x$ never requires computing coefficients of $X^j$ with $j>i$.
	For this reason, we may perform every computation of $\psi_i(n)$ modulo $X^{i+1}$, as if the nilpotence of $\rcal$ was $i+1$.
\end{remark}
\begin{lemma}
	\label{lem:psi1}
	With the above notation, we have
	$$\psi_1(n) = n.$$
\end{lemma}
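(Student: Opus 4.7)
The plan is induction on $n$, carrying out every computation modulo $X^2$ as permitted by Remark \ref{rem:deg_influence}. The base cases $\psi_1(0)=0$ and $\psi_1(1)=1$ are immediate from $0 \cdot P = \ocal$ and $1 \cdot P = P = \big(X:1:\ff(X)\big)$.

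For the inductive step, I would assume $(nP)_x \equiv nX \pmod{X^2}$. Since Proposition \ref{prop:zfx} ensures $x^3 \mid \ff(x)$, the $z$-coordinate vanishes already modulo $X^3$: $(nP)_z = \ff\big((nP)_x\big) \equiv 0 \pmod{X^2}$. Thus modulo $X^2$ we may represent $nP$ as $(nX:1:0)$ and $P$ as $(X:1:0)$, and apply Proposition \ref{prop:shortsum}. A direct substitution gives
\[ g_1 = X(a_1 nX + 1) + nX \equiv (n+1)X \pmod{X^2}, \qquad g_2 = 0, \]
so the output triple satisfies $X_3 \equiv (n+1) X \cdot H_1$ and $Z_3 \equiv (n+1) X \cdot H_3$ modulo $X^2$, while $Y_3 = H_1 H_4 - H_2 H_3$. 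Specializing at $X=0$ reduces both inputs to $\ocal = (0:1:0)$, so the triple $(X_3, Y_3, Z_3)|_{X=0}$ projectively represents $\ocal + \ocal = \ocal$; in particular $X_3|_{X=0}=Z_3|_{X=0}=0$ and $Y_3|_{X=0} = \lambda$ for some $\lambda \in \rcal^*$. Normalizing to standard form then produces
\[ \big((n+1)P\big)_x = X_3/Y_3 \equiv (n+1) X \cdot H_1|_{X=0}/\lambda \pmod{X^2}. \]

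The main obstacle is the identity $H_1|_{X=0} = \lambda = Y_3|_{X=0}$, which is exactly what closes the induction. I would verify this by direct substitution into the explicit bidegree-$(1,1)$ expressions for the $H_i$ given in Proposition \ref{prop:shortsum_calc} (Appendix); it is a short symbolic check. Conceptually, this identity is the first-order additivity of the group law at $\ocal$, i.e.\ the familiar fact that the associated formal group law expands as $F(u,v) = u+v + (\text{higher order})$; either viewpoint suffices to conclude $\psi_1(n+1) = n+1$.
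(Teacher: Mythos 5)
Your proof is correct and rests on the same idea as the paper's: truncate modulo $X^2$ (Remark \ref{rem:deg_influence}) and observe that to first order the group law is plain addition of $x$-coordinates. The paper phrases this as a reduction to $k=2$, where the sum is literally $(X_1+X_2:1:0)$; your induction through the explicit $H_i$ is a more detailed unwinding of the same fact, and the check you flag does go through, since at $X_i=Z_i=0$, $Y_i=1$ only the $Y_1Y_2$ terms of $H_1$ and $H_4$ survive while $H_2$ and $H_3$ vanish, giving $H_1|_{X=0}=Y_3|_{X=0}=1$.
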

\begin{proof}
	Thanks to Remark \ref{rem:deg_influence} we may assume $k=2$. This implies that for every $P \in E^{\infty}$ we have $P_z = \ff(P_x) = 0$, and the addition between two such points becomes 
	$$(X_1:1:0) + (X_2:1:0) = (X_1+X_2:1:0).$$ 
	In this case, the curve addition simply corresponds to the standard ring addition in the first entry, then $nP = (n P_x : 1 : 0).$
\end{proof}
\begin{lemma}
	\label{lem:psi2}
	With the above notation, we have
	$$\psi_2(n) = \binom{n}{2}a_1. $$
\end{lemma}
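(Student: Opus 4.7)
The plan is to proceed by induction on $n$, using the recursion $(n+1)P = nP + P$ together with Proposition \ref{prop:shortsum}. The base case $n=1$ is given by Remark \ref{rmk:psii1}, which provides $\psi_2(1) = 0 = \binom{1}{2} a_1$.

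Before the inductive step, I would invoke Remark \ref{rem:deg_influence} to reduce the computation to working modulo $X^3$. Since $x^3 \mid \ff(x)$ by Proposition \ref{prop:zfx}, every point $Q \in E^\infty$ satisfies $Q_z = \ff(Q_x) \equiv 0 \pmod{X^3}$, as $Q_x \in \langle X \rangle$. In particular, both $(nP)_z$ and $P_z$ vanish in this truncated setting.

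For the inductive step, I would apply Proposition \ref{prop:shortsum} to $P_1 = nP$ and $P_2 = P$ under the specialization $Z_1 = Z_2 = 0$, $Y_1 = Y_2 = 1$. Direct substitution gives $g_2 = 0$ and $g_1 = (nP)_x + X + a_1 (nP)_x X$. Using the explicit $H_i$ from the Appendix, one verifies that the standard $x$-coordinate $X_3/Y_3$ of $(n+1)P$ simplifies to $g_1$ modulo $X^3$, yielding the key recursion
\[ ((n+1)P)_x \equiv (nP)_x + X + a_1 (nP)_x \cdot X \pmod{X^3}. \]
Extracting the coefficient of $X^2$ with $\psi_1(n) = n$ (Lemma \ref{lem:psi1}) and the inductive hypothesis yields $\psi_2(n+1) = \psi_2(n) + n a_1 = \binom{n+1}{2} a_1$, closing the induction.

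The main obstacle is justifying this final recursion, i.e.\ the identity $X_3/Y_3 \equiv g_1 \pmod{X^3}$. This reduces to checking $H_1 \equiv Y_3 \pmod{X^3}$ under the above specialization, which requires the explicit form of the $H_i$ (or an equivalent direct expansion via the Bosma-Lenstra addition formulas). While routine, this bookkeeping must be performed carefully, as any error in tracking the $O(X^3)$ contributions would corrupt the extracted coefficient of $X^2$.
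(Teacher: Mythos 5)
Your proof is correct and follows essentially the same route as the paper: reduce to $k=3$ via Remark \ref{rem:deg_influence}, note that all $z$-coordinates vanish there, perform one symbolic addition to extract the recurrence $\psi_2(n+1)-\psi_2(n)=a_1 n$, and telescope using $\psi_2(1)=0$; the paper just writes the addition out directly rather than routing it through the $g_i,H_i$ decomposition of Proposition \ref{prop:shortsum}. One small correction to your final paragraph: the identity $X_3/Y_3\equiv g_1 \pmod{X^3}$ does \emph{not} reduce to $H_1\equiv Y_3\pmod{X^3}$ --- that congruence is false, since under your specialization $H_1\equiv 1+a_1X-a_2nX^2$ while $Y_3=H_1H_4-H_2H_3\equiv 1+a_1X\pmod{X^3}$. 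What you actually need, and what holds, is the weaker congruence $H_1\equiv Y_3\pmod{X^2}$: since $g_1\in\langle X\rangle$ and $Y_3$ is a unit, $X_3/Y_3-g_1=g_1(H_1-Y_3)Y_3^{-1}\in\langle X^3\rangle$, so the claimed recursion and hence the conclusion $\psi_2(n)=\binom{n}{2}a_1$ stand.
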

\begin{proof}
	By Remark \ref{rem:deg_influence} we can assume $k=3$. Again, this implies that for every $P \in E^{\infty}$ we have $P_z = 0$, hence $P = (X:1:0)$.
	We find $\psi_2(n)$ recursively, by computing 
	\[ nP = (X:1:0) + \big( \psi_1(n-1)X + \psi_2(n-1)X^2 : 1 : 0 \big). \]
	Performing this addition, we obtain
	\[ nP = \big( (1+\psi_1(n-1)) X + ( a_1 + 2 a_1 \psi_1(n-1) + \psi_2(n-1) ) X^2 : 1 + a_1 X : 0 \big). \]
	The inverse of its $y$-coordinate is $1-a_1 X-a_1^2X^2$, therefore
	\[ nP = \big( (1+\psi_1(n-1)) X + ( a_1\psi_1(n-1) + \psi_2(n-1) ) X^2 : 1 : 0 \big). \]
	Hence, we have
	\begin{equation*}
		\psi_2(n) = a_1\psi_1(n-1) + \psi_2(n-1),
	\end{equation*}
	which leads to the recurrence relation
	$$\psi_2(n) - \psi_2(n-1) = a_1 \psi_1(n-1) = a_1 (n-1),$$
    where the last equality follows from Lemma \ref{lem:psi1}.
	Since $\psi_2(1) = 0$, by Remark \ref{rmk:psii1}, we get 
	$$ \psi_2(n) = \Sum_{m=1}^{n-1} \big(\psi_2(m+1) - \psi_2(m) \big) =  a_1 \frac{n(n-1)}{2}, $$
	which concludes the proof.
\end{proof}
The explicit computation of $\psi_i(n)$ becomes increasingly harder for larger values of $i$. However, the technique used in Lemma \ref{lem:psi2} can be used to infer useful properties about these objects.
To prove them, we need the following technical lemma.

\begin{lemma}
	\label{lem:contirognosi}
	Let $\Z[a_1,\dots,a_6][\bt_1, \dots, \bt_{k-1}]$ be the graded ring of weights $\deg_\bt(\bt_j) = j$.
	For every $2 \leq i \leq k-1$, there exists
	\[ g_i \in \langle \bt_1, \dots, \bt_{i-1} \rangle, \quad \deg_\bt(g_i) = i-1, \]
	such that, if we symbolically compute
	\begin{equation}
		\label{eq:zuppaefagioli}
		S = \big( X:1:\ff(X) \big) + \left( \sum_{j=1}^k \bt_j X^j :1:\ff\left( \sum_{j=1}^k \bt_j X^j\right) \right),
	\end{equation}
	then the coefficient of $X^i$ in $S_x$ is $\bt_i + g_i$.
\end{lemma}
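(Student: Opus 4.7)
The plan is to introduce a fresh variable $Y$ and first study the symbolic sum
\[ \tilde S := \bigl(X:1:\ff(X)\bigr) + \bigl(Y:1:\ff(Y)\bigr) \]
as a formal expression in $X,Y$ over $\Z[a_1,\ldots,a_6]$. Since $x^3 \mid \ff(x)$ by Proposition \ref{prop:zfx}, we have $\ff(0)=0$, so $(0:1:\ff(0))=(0:1:0)=\ocal$ is the identity. Consequently, the specializations $\tilde S_x|_{Y=0}=X$ and $\tilde S_x|_{X=0}=Y$ hold formally, so $\tilde S_x - X - Y$ vanishes on both coordinate axes. This forces the factorization
\[ \tilde S_x = X + Y + XY \cdot h(X,Y), \qquad h \in \Z[a_1,\ldots,a_6][X,Y], \]
understood modulo sufficiently high powers of $X$ and $Y$, which is all we need for a fixed coefficient of $X^i$.

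Next I would substitute $Y = \sum_{j=1}^{k} \bt_j X^j$ and extract the coefficient of $X^i$ from each summand. The $X$ term gives $0$ (since $i\ge 2$) and the $Y$ term gives exactly $\bt_i$, so every remaining contribution is packed into $XY\cdot h(X,Y)$. Writing $h(X,Y)=\sum_{a,b\ge 0} h_{a,b} X^a Y^b$, each monomial produces, after substitution, terms of the form $h_{a,b}\,\bt_{j_1}\cdots \bt_{j_{b+1}}\, X^i$ with indices $j_l\ge 1$ satisfying $j_1+\cdots+j_{b+1}=i-a-1$. Since the other $b$ indices sum to at least $b$, every single index satisfies $j_l\le i-a-1-b\le i-1$. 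This simultaneously shows $g_i\in\langle\bt_1,\ldots,\bt_{i-1}\rangle$ and that its weighted $\bt$-degree, equal to $\sum_l j_l=i-a-1$, is at most $i-1$.

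Finally, the equality $\deg_\bt(g_i)=i-1$ follows by isolating the monomial $\bt_{i-1}$ in $g_i$. The only triple producing this particular monomial is $a=b=0$, $j_1=i-1$, which contributes $h_{0,0}\cdot\bt_{i-1}$, where $h_{0,0}$ is the $XY$-coefficient of $\tilde S_x$. A short symbolic expansion of $\tilde S_x$ through Proposition \ref{prop:shortsum} reveals $h_{0,0}=a_1$, consistent with the case $i=2$ handled in Lemma \ref{lem:psi2}. The main obstacle is purely notational: keeping the three gradings (the $X$-grading, the weighted grading $\deg_\bt(\bt_j)=j$, and the total $\bt$-degree) carefully separated while expanding $XY\cdot h(X,Y)$. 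Once the auxiliary variable $Y$ is introduced, the whole argument reduces to a degree count on the identity $\tilde S_x = X+Y+XY\cdot h(X,Y)$.
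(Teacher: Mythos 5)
Your proof is correct, but it is organized differently from the paper's. The paper substitutes $Q_x=\sum_j\bt_jX^j$ from the outset and tracks the weighted degree $\deg_\bt$ directly through the addition formulas: it bounds $\deg_\bt(\Psi_i)\le i$, isolates the terms of $\deg_\bt\ge i-1$ by a close inspection of the explicit sum (Proposition \ref{prop:sommemodulop2}), and argues that every monomial of $\Psi_i$ for $i\ge 2$ involves some $\bt_j$ by inspecting which terms of $S_x$ fail to be multiplied by a term of $Q$. You instead prove a clean two-variable identity $\tilde S_x=X+Y+XY\,h(X,Y)$, deriving the vanishing on the two axes from the neutral-element property $P+\ocal=P$ and $\ocal+Q=Q$ (using $\ff(0)=0$), and then reduce everything to index bookkeeping on the substitution $Y=\sum_j\bt_jX^j$; this simultaneously yields $g_i\in\langle\bt_1,\dots,\bt_{i-1}\rangle$ and $\deg_\bt(g_i)\le i-1$ without inspecting the formulas, which is more conceptual and arguably more robust. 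Both arguments still require one explicit computation, namely that the coefficient of $P_xQ_x$ (your $h_{0,0}$) in the normalized sum equals $a_1$, in order to upgrade $\deg_\bt(g_i)\le i-1$ to an equality; you correctly defer this to the same inspection the paper performs via Proposition \ref{prop:sommemodulop2}. Two minor points you should make explicit if you write this up: the division by the $y$-coordinate $Y_3=1+(\text{element of }\langle X,Y\rangle)$ is legitimate because $Y_3$ is a unit modulo the truncation, so $\tilde S_x$ really is a polynomial in $X,Y$ modulo high powers; and the identity $\tilde S_x|_{Y=0}=X$ holds formally because the pair $(P,\ocal)$ is non-exceptional for $+_{(0:1:0)}$, so the addition law genuinely computes $P+\ocal=P$ in the symbolic ring.
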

\begin{proof}
	Let us denote for simplicity the two points involved in the sum \eqref{eq:zuppaefagioli} by $P$ and $Q$, respectively. 
	We observe that the coefficient of $X^j$ in $Q_x$ has always $\deg_\bt$ equal to $j$. 
	Since $Q_z = \ff(Q_x)$, this also holds for $Q_z$.
	On the other hand, the coefficients of every power of $X$ have $\deg_\bt$ equal to $0$ in both $P_x$ and $P_z$.
	Since the addition formulae are polynomials in the entries, if we write
	\[ S_x = \sum_{i = 1}^k \Psi_i X^i, \]
	then we have $\deg_\bt(\Psi_i) \leq i$.
	We now look at all the terms in $\Psi_i$ with $\deg_\bt$ at least $i-1$, namely those terms that involve at most one time $P_x$, and that never involve $P_z$.
	A close inspection of the addition law (detailed in the appendix, see Proposition \ref{prop:sommemodulop2} with $P_1=Q$ and $P_2=P$) shows that these terms only arise from
	\begin{equation} \label{eq:contacci}
		P_x + Q_x + \big(a_1Q_x - a_2 Q_x^2 + 2a_3 Q_z - 2a_4 Q_xQ_z - 3a_6 Q_z^2\big)P_x.
	\end{equation}
	Since we are considering $i \geq 2$, then $P_x = X$ alone does not produce any term in $\Psi_i$.
	Instead, the term $\bt_i$ of $Q_x$ appears in $\Psi_i$, and it is therefore its unique term of maximal $\deg_\bt$.
	We now show that the element
	\[ g_i = \Psi_i - \bt_i \]
	is the required polynomial.
	By construction we have $\deg_\bt(g_i) \leq i-1$, but from equation \eqref{eq:contacci} we see that it always contain the term $a_1 \bt_{i-1}$, which has $\deg_\bt$ equal to $i-1$. Hence, we have $\deg_\bt(g_i) = i-1$.
	Finally, an easy inspection of the formulae of Proposition \ref{prop:shortsum} shows that the unique term of $P$ that never appears in $S_x$ multiplied by any term of $Q$ is $P_x = X$.
	However, this term only appears in $\Psi_1$, therefore for all $i \geq 2$ every monomial composing $\Psi_i$ is divisible by some $\bt_j$.
	Furthermore, we have $\deg_\bt(\bt_j) = j$, so $\bt_j$ cannot appear in $g_i$ for every $j \geq i$. In conclusion, we have
	\[ g_i \in \langle \bt_1, \dots, \bt_{i-1} \rangle, \]
	so all such $g_i$'s have the required properties.
\end{proof}
We are now ready to prove the main results of this section.
\begin{theorem}
	\label{teo:psi_poli}
	For every $1 \leq i \leq k-1$, the $i$-th multiplication polynomial $\psi_i$ is a polynomial in $\Q[a_1, \dots, a_6][n]$ of degree $i$ in $n$.
	Moreover, we have $n|\psi_i(n)$.
\end{theorem}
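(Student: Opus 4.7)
The plan is to proceed by strong induction on $i$, mirroring the telescoping argument used for $\psi_2$ in Lemma \ref{lem:psi2}. The base case $i=1$ is Lemma \ref{lem:psi1}: $\psi_1(n) = n$ is a degree-$1$ polynomial divisible by $n$. For the inductive step, suppose the statement holds for every $j < i$.

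I would exploit the identity $(n+1)P = nP + P$ by applying Lemma \ref{lem:contirognosi} with $Q = nP$, so that $\bt_j = \psi_j(n)$. Reading off the coefficient of $X^i$ on both sides immediately gives the recurrence
\[
\psi_i(n+1) - \psi_i(n) \;=\; g_i\bigl(\psi_1(n), \ldots, \psi_{i-1}(n)\bigr).
\]
By the inductive hypothesis, each $\psi_j(n)$ is a polynomial in $n$ of degree $j$ over $\Q[a_1, \ldots, a_6]$. Since $g_i$ has weighted $\bt$-degree $i-1$, each of its monomials $\bt_1^{e_1}\cdots\bt_{i-1}^{e_{i-1}}$ satisfies $\sum_j j\, e_j = i-1$, so after substitution it becomes a polynomial in $n$ of degree at most $\sum_j j\, e_j = i-1$. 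Hence the right-hand side above is a polynomial in $n$ of degree at most $i-1$.

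Using $\psi_i(1) = 0$ from Remark \ref{rmk:psii1} and telescoping, I obtain
\[
\psi_i(n) \;=\; \Sum_{m=1}^{n-1} g_i\bigl(\psi_1(m), \ldots, \psi_{i-1}(m)\bigr) \qquad \text{for } n \geq 1,
\]
and by Faulhaber's formulas the indefinite sum of a polynomial in $m$ of degree $d$ is a polynomial in $n$ of degree $d+1$ with rational coefficients. Hence $\psi_i$ extends to an element of $\Q[a_1, \ldots, a_6][n]$ of degree at most $i$. For the divisibility part, the identity $0 \cdot P = \ocal = (0:1:0)$ gives $\psi_i(0) = 0$, so the extended polynomial has no constant term, i.e.\ $n \mid \psi_i(n)$.

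The only subtle point is upgrading "degree at most $i$" to "degree exactly $i$": one must verify that the coefficient of $n^i$ in $\psi_i$ does not vanish identically in $\Q[a_1, \ldots, a_6]$. Lemma \ref{lem:contirognosi} guarantees that $g_i$ contains the monomial $a_1 \bt_{i-1}$, which substitutes to $a_1 \psi_{i-1}(n)$ of degree exactly $i-1$ in $n$. An inductive analysis tracking how powers of $a_1$ propagate through the recurrence — exploiting that every other monomial $\bt_1^{e_1}\cdots\bt_{i-1}^{e_{i-1}}$ with $\sum e_j \geq 2$ produces a strictly smaller power of $a_1$ in its leading coefficient — should isolate a nonzero multiple of $a_1^{i-1}$ in the top-degree coefficient of $\psi_i$, and hence complete the argument.
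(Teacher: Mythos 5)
Your argument is essentially the paper's own proof: the same strong induction, the same recurrence $\psi_i(n)-\psi_i(n-1)=g_i\big(\psi_1(n-1),\dots,\psi_{i-1}(n-1)\big)$ obtained from Lemma \ref{lem:contirognosi}, the same telescoping from $\psi_i(1)=0$, and the same appeal to Faulhaber's formulas. The subtlety you flag about the degree being exactly $i$ is settled in the paper by the very observation you invoke --- Lemma \ref{lem:contirognosi} guarantees the monomial $a_1\bt_{i-1}$ in $g_i$, so $\deg_\bt(g_i)=i-1$ and the leading coefficient $c_{i-1}$ is nonzero --- and your derivation of $n\mid\psi_i(n)$ from $\psi_i(0)=0$ is only a cosmetic variant of the paper's use of the fact that each Faulhaber sum $S_j(n)$ has no constant term.
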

\begin{proof}
	The case $i=1$ follows from Lemma \ref{lem:psi1}.
	We prove the thesis by extended induction on $i\geq 2$.
	The base case $i=2$ is given by Lemma \ref{lem:psi2}.
	We now assume that this result holds for every $j \leq i-1$, and we show that this implies it also holds for $i$.
	Since the addition law is associative, we have
	\[ (nP)_x = \big(P + (n-1)P\big)_x. \]
	The coefficient of $X^i$ on the left-hand side is $\psi_i(n)$. The right-hand side is given by Equation \eqref{eq:zuppaefagioli} after substituting $\bt_j = \psi_j(n-1)$.
	Hence by applying Lemma \ref{lem:contirognosi} we obtain
	\[ \psi_i(n) - \psi_i(n-1) = g_i\big(\psi_1(n-1), \dots, \psi_{i-1}(n-1)\big). \]
	By the inductive hypothesis, $\psi_j(n-1)$ is a degree-$j$ polynomial in $n$ without the constant term.
	Since $\deg_\bt(\bt_j) = j$, $\deg_\bt(g_i) = i-1$ and $g_i$ has no constant terms, then the evaluation of $g_i$ by $\bt_j = \psi_j(n-1)$ produces a degree-$(i-1)$ polynomial in $n-1$ without a constant term, namely
	\[ g_i\big(\psi_1(n-1), \dots, \psi_{i-1}(n-1)\big) = c_1 (n-1) + \dots + c_{i-1} (n-1)^{i-1}, \]
	for some coefficients $c_j \in \Q[a_1, \dots, a_6]$, and $c_{i-1} \neq 0$.
	
	The above arguments hold uniformly on $n$, therefore we have a system of relations
	\[ \begin{cases}
		\psi_i(n) - \psi_i(n-1) &= c_1 (n-1) + \dots + c_{i-1} (n-1)^{i-1}, \\
		\psi_i(n-1) - \psi_i(n-2) &= c_1 (n-2) + \dots + c_{i-1} (n-2)^{i-1}, \\
		&\vdots \\
		\psi_i(2) - \psi_i(1) &= c_1 + \dots + c_{i-1}.
	\end{cases}\]
	We recall that for every $i \geq 2$, we have $\psi_i(1) = 0$ by Remark \ref{rmk:psii1}. 
	Therefore, by adding all the above relations we obtain
	\begin{equation}
		\label{eq:psi_i_fab} \psi_i(n) = c_1\Sum_{m=1}^{n-1} m + c_2\Sum_{m=1}^{n-1}m^2 + \cdots + c_{i-1} \Sum_{m=1}^{n-1}m^{i-1}.
	\end{equation}
	Thanks to Faulhaber formulas \cite[Sec. 6.5]{graham}, the sum
	\[ S_j(n) = 1^j + 2^j + \cdots + (n-1)^j = \Sum_{m=1}^{n-1} m^j \]
	can be expressed in a closed form as a polynomial in $\Q[n]$ of degree $j+1$ without a constant term.
	Since
	\begin{equation}
		\label{eq:psi_closedsum}
		\psi_i(n) = \Sum_{j=1}^{i-1} c_j\Sum_{m=1}^{n-1} m^j = \Sum_{j=1}^{i-1} c_j S_j(n),
	\end{equation}
	then also $\psi_i(n)$ can be expressed as a polynomial in $\Q[a_1,\dots,a_6][n]$ of degree $i$ and with constant term equal to $0$, namely $n|\psi_i(n)$.
\end{proof}

The explicit polynomials $\psi_i(n)$ for the first values of $i$, as well as further details on their computation, can be found in the Appendix (Section \ref{sec:coeff}).
A more extensive list can be found at \cite{repo}.

\begin{remark}
	Whenever the elliptic curve is fixed, the $a_i$'s are fixed elements of $\rcal$, therefore Theorem \ref{teo:psi_poli} implies that $\psi_i(n)$ will have degree at most $i$ in $n$.
	However, its degree might well be strictly lower than $i$, for instance when dealing with short Weierstrass forms $(a_1 = a_2 = a_3 = 0)$.
\end{remark}
\begin{remark}
	While being a polynomial in $\Q[a1,\dots,a_6][n]$, when evaluated in a specific $\bar{n}\in\N^*$ we always get $\psi_i(\bar{n}) \in \Z[a_1,\dots,a_6]$. This is expected since we are dealing with integer quantities when adding points.
	For $\psi_1(\bar{n})=\bar{n}$ is clear, and also $\psi_2(\bar{n}) \in \Z[a_1,\dots,a_6]$, since one among $\bar{n}$ and $\bar{n}-1$ will be even.
	Following the same induction performed in Theorem \ref{teo:psi_poli}, we eventually conclude that $\psi_i(\bar{n}) \in \Z[a_1,\dots,a_6]$ for every $1 \leq i \leq k-1$.
\end{remark}

\begin{notation*} We denote the product of the first $i \in \N^*$ factorials as
	\[ \Pi(i) = \prod_{j=1}^i j!. \]
\end{notation*}
\begin{lemma}
	\label{lem:multinomial}
	Let $i, j_1, \dots, j_m \in \N^*$ such that $\sum_{l=1}^m j_l \leq i$. Then
	\[ \Pi(j_1) \cdots \Pi(j_m) | \Pi(i). \]
\end{lemma}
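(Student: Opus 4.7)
The plan is to reduce to the two-factor case and then verify that via a binomial-coefficient argument. By induction on $m$, it suffices to combine indices in pairs: assuming the claim for $m-1$, if one sets $s = j_1 + \dots + j_{m-1}$, then $\Pi(j_1) \cdots \Pi(j_{m-1}) \mid \Pi(s)$, so the full statement reduces to showing the two key facts
\[ \Pi(s)\,\Pi(j_m) \mid \Pi(s + j_m) \quad \text{and} \quad \Pi(s + j_m) \mid \Pi(i), \]
the second being immediate from $s + j_m \leq i$ since $\Pi$ is a product of factorials and $\Pi(k) \mid \Pi(k+1)$ for every $k$. The base case $m=1$ is just this same monotonicity.

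Hence the whole lemma boils down to the two-variable statement: for every $a, b \in \N$,
\[ \Pi(a)\,\Pi(b) \mid \Pi(a+b). \]
To prove this, I would compute the quotient directly. Splitting the product for $\Pi(a+b)$ at index $a$ gives
\[ \frac{\Pi(a+b)}{\Pi(a)\,\Pi(b)} \;=\; \frac{\prod_{j=a+1}^{a+b} j!}{\prod_{j=1}^{b} j!} \;=\; \prod_{j=1}^{b} \frac{(a+j)!}{j!} \;=\; (a!)^{b}\prod_{j=1}^{b} \binom{a+j}{a}, \]
where the last equality uses $(a+j)!/(a!\,j!) = \binom{a+j}{a}$. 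Since each binomial coefficient is a positive integer, the quotient is an integer, proving the two-variable divisibility.

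Putting everything together, an explicit induction on $m$ gives the statement: the base case is the monotonicity $\Pi(j_1) \mid \Pi(i)$, and the inductive step applies the two-variable identity above to merge $\Pi(j_1)\cdots\Pi(j_{m-1})\cdot\Pi(j_m)$ into a divisor of $\Pi(s+j_m)$, which in turn divides $\Pi(i)$.

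I do not expect a significant obstacle here; the only mildly delicate point is the algebraic manipulation that rewrites $\Pi(a+b)/(\Pi(a)\Pi(b))$ as a product of binomial coefficients times a power of $a!$. Once that identity is written out, integrality is immediate, and the induction on the number of factors is routine.
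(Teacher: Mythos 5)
Your proof is correct, but it follows a genuinely different route from the paper's. The paper inducts on $i$: writing $\Pi(i) = i!\,\Pi(i-1)$, it invokes the Multinomial Theorem to get $j_1!\cdots j_m! \mid i!$ in one stroke for all $m$ factors, and applies the inductive hypothesis to the shifted indices $j_1-1,\dots,j_m-1$ (whose sum is at most $i-1$) to get $\Pi(j_1-1)\cdots\Pi(j_m-1)\mid\Pi(i-1)$; multiplying the two divisibilities gives the claim. You instead induct on $m$, reducing everything to the two-variable statement $\Pi(a)\,\Pi(b)\mid\Pi(a+b)$ together with the monotonicity $\Pi(k)\mid\Pi(k+1)$, and then you prove the binary case by exhibiting the quotient explicitly as $(a!)^b\prod_{j=1}^{b}\binom{a+j}{a}$, whose terms are manifestly integers. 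Both arguments are complete; yours buys an explicit closed form for the quotient in the two-factor case (a strictly stronger piece of information), at the cost of a two-level induction, while the paper's is shorter because the multinomial coefficient absorbs all $m$ factorials simultaneously. The only cosmetic point common to both write-ups is the degenerate value $\Pi(0)$ (needed in the paper when some $j_l=1$, and implicitly avoided in yours since all $j_l\geq 1$ keeps $s\geq 1$); it should be read as the empty product $1$.
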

\begin{proof}
	We prove this by induction on $i$. For $i=1$ there is nothing to prove. Now let us assume that it holds for $i-1$ and every possible $j_1,\dots,j_m \in \N^*$ such that $\sum_{l=1}^m j_l \leq i-1$. Let $j_1,\dots j_m\in \N^*$ be such that $\sum_{l=1}^m j_l \leq i$.
    Notice that in general $\Pi(i) = i!\Pi(i-1)$.
    By the Multinomial Theorem, we have $j_1!\cdots j_m! | i!$, and by inductive hypothesis $\Pi(j_1-1) \cdots \Pi(j_m-1) | \Pi(i-1)$, hence the thesis follows by multiplying the previous relations.
\end{proof}
\begin{theorem}
	\label{teo:psi_int}
	With the above notation, for every $1 \leq i \leq k-1$ we have
	\[ \Pi(i) \psi_i(n) \in \Z[a_1,...,a_6][n]. \]
\end{theorem}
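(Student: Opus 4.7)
The plan is to proceed by strong induction on $i$, using the closed-form expression for $\psi_i(n)$ extracted from the proof of Theorem \ref{teo:psi_poli} and separating the two distinct sources of $\Q$-denominators that appear in it. The base case $i=1$ is immediate, since $\psi_1(n)=n$ by Lemma \ref{lem:psi1} and $\Pi(1)=1$. For the inductive step, I would assume that $\Pi(j)\,\psi_j(n)\in\Z[a_1,\dots,a_6][n]$ for every $j<i$ and exploit the identity from equation \eqref{eq:psi_closedsum},
$$\psi_i(n)=\sum_{j=1}^{i-1} c_j\, S_j(n),$$
where $c_j\in\Q[a_1,\dots,a_6]$ are the coefficients of $(n-1)^j$ in the expansion of $g_i\bigl(\psi_1(n-1),\dots,\psi_{i-1}(n-1)\bigr)$, and $S_j(n)=\sum_{m=1}^{n-1} m^j$ is the $j$-th power sum. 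The strategy is to bound the $\Q$-denominators of the $c_j$'s and those of the $S_j(n)$'s separately, and to observe that the product $\Pi(i-1)\cdot i!$ is precisely $\Pi(i)$.

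For the first bound, I would recall from Lemma \ref{lem:contirognosi} that $g_i\in\Z[a_1,\dots,a_6][\bt_1,\dots,\bt_{i-1}]$, and that every monomial $\bt_1^{e_1}\cdots\bt_{i-1}^{e_{i-1}}$ appearing in $g_i$ has weighted degree $\sum_l l\,e_l\leq i-1$. Substituting $\bt_l=\psi_l(n-1)$ and applying the inductive hypothesis, each such monomial becomes a polynomial in $(n-1)$ whose $\Q$-denominators divide $\prod_l \Pi(l)^{e_l}$. Listing each index $l$ with multiplicity $e_l$ produces a multiset $\{j_1,\dots,j_m\}$ with $\sum_l j_l=\sum_l l\,e_l\leq i-1$, so Lemma \ref{lem:multinomial} yields $\prod_l \Pi(l)^{e_l}\mid\Pi(i-1)$. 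Summing over the monomials of $g_i$ gives $\Pi(i-1)\,c_j\in\Z[a_1,\dots,a_6]$ for every $j$.

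For the second bound, $S_j(n)$ is a polynomial in $\Q[n]$ of degree $j+1$ that takes integer values on $\N$, hence by the standard theory of integer-valued polynomials admits a $\binom{n}{k}$-expansion with integer coefficients for $k\leq j+1$, from which $(j+1)!\,S_j(n)\in\Z[n]$. Since $j\leq i-1$, this gives $i!\,S_j(n)\in\Z[n]$. Using $\Pi(i)=i!\cdot\Pi(i-1)$, for each $j$ I obtain
$$\Pi(i)\, c_j\, S_j(n)=\bigl(\Pi(i-1)\, c_j\bigr)\cdot\bigl(i!\, S_j(n)\bigr)\in\Z[a_1,\dots,a_6][n],$$
and summing over $j$ closes the induction.

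The main delicacy lies in the simultaneous control of the two independent denominator sources (the substitution of lower multiplication polynomials into $g_i$, governed by Lemma \ref{lem:multinomial}, and the Faulhaber-type power sums, governed by integer-valued polynomials). The multiplicative structure $\Pi(i)=i!\cdot\Pi(i-1)$ is exactly what permits both to be cleared by the same universal constant, so the bulk of the work consists in verifying that the weighted-degree bound $\deg_\bt(g_i)=i-1$ in Lemma \ref{lem:contirognosi} is sharp enough to invoke Lemma \ref{lem:multinomial} with $\Pi(i-1)$ rather than a larger factorial product.
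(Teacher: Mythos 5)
Your proposal is correct and follows essentially the same route as the paper's proof: the same decomposition via equation \eqref{eq:psi_closedsum}, the same use of Lemma \ref{lem:contirognosi} together with Lemma \ref{lem:multinomial} to clear the denominators of the $c_j$ with $\Pi(i-1)$, and the same factorization $\Pi(i)=i!\cdot\Pi(i-1)$ to combine the two bounds. The only (harmless) difference is that you justify $(j+1)!\,S_j(n)\in\Z[n]$ via the theory of integer-valued polynomials, whereas the paper derives it from the Faulhaber recursion $(m+1)S_m(n)=n^{m+1}-\sum_{j=0}^{m-1}\binom{m+1}{j}S_j(n)$.
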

\begin{proof}
	By Theorem \ref{teo:psi_poli} we know that $\psi_i(n) \in \Q[a_1,...,a_6][n]$.
	The case $i=1$ follows from Lemma \ref{lem:psi1}.
	We prove the thesis by extended induction on $i\geq 2$, where the base case $i=2$ is given by Lemma \ref{lem:psi2}.
	Let us assume that the thesis holds for every $j < i$. With the notation of Equation \eqref{eq:psi_closedsum} we prove that
	\begin{enumerate}[label=(\roman*)]
		\item \label{item:1} for every $1 \leq j \leq i-1$ we have $\Pi(i-1) c_j \in \Z[a_1,...,a_6]$,
		\item \label{item:2} for every $1 \leq j \leq i-1$ we have $i! S_j(n) \in \Z[a_1,...,a_6]$.
	\end{enumerate}
	By combining \ref{item:1} and \ref{item:2} with Equation \eqref{eq:psi_closedsum}, the thesis follows.
	
	\ref{item:1}: By Lemma \ref{lem:contirognosi} we know that $\deg_\bt(g_i) = i-1$ and that $g_i$ arises as a polynomial in the $\psi_j(n-1)$ and coefficients in $\Z[a_1,\dots,a_6]$, whose monomials $\psi_{j_1}(n-1) \cdots \psi_{j_m}(n-1)$ satisfy $\sum_{l=1}^m j_l \leq i-1$.
	By the inductive hypothesis, for all such monomials we have
	\[ \Pi(j_1) \cdots \Pi(j_m) \psi_{j_1} \cdots \psi_{j_m} \in \Z[a_1,...,a_6][n]. \]
	Since $\Pi(j_1) \cdots \Pi(j_m)$ always divides $\Pi(i-1)$ by Lemma \ref{lem:multinomial}, then
	\[ \Pi(i-1) g_i\big(\psi_1(n-1), \dots, \psi_{i-1}(n-1)\big) \in \Z[a_1, \dots,a_6][n]. \]
	\ref{item:2}: From \cite[Eq. 6.80]{graham} we obtain
	\[ (m+1)S_m(n) = n^{m+1} - \Sum_{j=0}^{m-1} \binom{m+1}{j}S_j(n). \] 
	Since $2S_1(n) = n^2 - n$, a simple induction on $m \geq 1$ shows that we have $(j+1)!S_j(n) \in \Z[n]$, therefore also $i!S_j(n) \in \Z[n]$.
\end{proof}
\begin{cor}
	\label{cor:psi_i_zero}
	Let $p$ be a prime number. For every exponent $l \geq 1$ and for every $1 \leq i < p$, we have
	$$\psi_i(p^l) \equiv 0 \ \bmod p^l.$$
\end{cor}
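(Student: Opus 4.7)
The plan is to combine the two main theorems just proved with a coprimality observation. By Theorem \ref{teo:psi_poli}, the polynomial $\psi_i(n)\in\Q[a_1,\dots,a_6][n]$ is divisible by $n$ as a polynomial, so one can write $\psi_i(n) = n\,\tilde{\psi}_i(n)$ for some $\tilde{\psi}_i\in\Q[a_1,\dots,a_6][n]$. Multiplying through by $\Pi(i)$ and invoking Theorem \ref{teo:psi_int}, the product $\Pi(i)\psi_i(n) = n\cdot\Pi(i)\tilde{\psi}_i(n)$ lies in $\Z[a_1,\dots,a_6][n]$, and since division by $n$ preserves integrality coefficient-wise, there exists $Q(n)\in\Z[a_1,\dots,a_6][n]$ such that
\[ \Pi(i)\,\psi_i(n) \;=\; n\cdot Q(n). \]

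Next I would specialize $n = p^l$, obtaining $\Pi(i)\,\psi_i(p^l) = p^l\,Q(p^l)$ as an identity in $\Z[a_1,\dots,a_6]$. Hence $\Pi(i)\,\psi_i(p^l)\equiv 0\pmod{p^l}$. The final step is the coprimality remark: by definition $\Pi(i)=\prod_{j=1}^{i}j!$, and for every $j\leq i<p$ none of the factors $1,2,\dots,j$ is divisible by $p$, so $\gcd(\Pi(i),p)=1$ and therefore $\Pi(i)$ is a unit modulo $p^l$. Multiplying by its inverse in $\Z/p^l\Z$ yields $\psi_i(p^l)\equiv 0\pmod{p^l}$, as desired. (One also knows from the last remark before the notation that $\psi_i(p^l)\in\Z[a_1,\dots,a_6]$, so the congruence statement makes literal sense coefficient-wise.)

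There is no real obstacle here: the two theorems do the heavy lifting, and the only new ingredient is noticing that when $i<p$ the integer $\Pi(i)$ has no $p$ in it, which is exactly what lets one cancel it modulo $p^l$. The mildest care point is the transition between the polynomial identity $n\mid\psi_i(n)$ in $\Q[a_1,\dots,a_6][n]$ and its integral version $n\mid\Pi(i)\psi_i(n)$ in $\Z[a_1,\dots,a_6][n]$, which I would justify briefly by observing that the coefficients of $\Pi(i)\psi_i(n)$ lie in $\Z[a_1,\dots,a_6]$ and its constant term in $n$ vanishes.
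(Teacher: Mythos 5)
Your proposal is correct and follows essentially the same route as the paper: combine Theorem \ref{teo:psi_poli} (which gives $n \mid \psi_i(n)$) with Theorem \ref{teo:psi_int} (integrality of $\Pi(i)\psi_i$) and observe that $\Pi(i)$ is coprime to $p$ when $i<p$, hence invertible modulo $p^l$. The only difference is that you spell out the coefficient-shifting argument for why the quotient by $n$ stays integral, which the paper leaves implicit.
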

\begin{proof}
	By Theorem \ref{teo:psi_int} we have $\Pi(i) \psi_i(p^l) \in \Z[a_1,\dots,a_6]$.
	Thanks to Theorem \ref{teo:psi_poli} we also have $\Pi(i) \psi_i(p^l) \equiv 0 \mod p^l$.
	By definition $\Pi(i) \in (\Z/p^l\Z)^*$ for every $i < p$, therefore we conclude $\psi_i(p^l)\equiv 0 \mod p^l$.
\end{proof}

\begin{remark}
	As $\rcal$ is a finite local ring, its characteristic $\chr(\rcal)$ is a prime power $p^l$.
	Thus, Corollary \ref{cor:psi_i_zero} implies that
	for every $P=\big(X:1:\ff(X)\big) \in E^{\infty}$ we have $\psi_i(p^l) = 0$ for every $i<p$, hence
	\[ X^p \ | \ (p^lP)_x. \]
	Furthermore, since $X \in \m$, this implies $(p^lP)_x \in \m^p$.
	This condition imposes severe restrictions on the possible group structures arising from $E^\infty$.
\end{remark}
\section{Elliptic curves over \texorpdfstring{$\F_q[x]/(x^k)$}{Fq[x]/(x\^k)}} \label{sec:GrpStructure}
In this section, we fix a prime power $q = p^e$ and a positive integer $k \in \N^*$.
Let $\F_q$ be the finite field of size $q$, and consider the ring
\[ R_k = \F_q[x]/(x^k) \simeq \F_q[\eps], \ \textnormal{with } \eps^k = 0. \]
Such an $R_k$ is a finite local ring, whose maximal ideal $\m = (\eps)$ is principal, therefore it underlies the results of the previous sections.
From now on, we will work over the ring $\rcal = R_k$.
Notice that $k$ is the nilpotence degree of $R_k$, consistently with our previous notation.
If $k = 1$ then $R_k \simeq \F_q$, while if $k \geq 2$ then every element $r \in R_k$ may be written uniquely as $r = a + b\eps$, for some $a \in \F_q$ and a degree-$(k-2)$ polynomial $b \in \F_q[\eps]$.
Moreover, $r \in R_k^*$ if and only if $a \neq 0$. Hence, every $r \in R_k$ is either a unit or divisible by $\eps$.

We will exploit the multiplication polynomials to compute the group structure of elliptic curves over this ring in all but a few exceptional cases given by particular choices of the curve coefficients $a_1, \dots, a_6 \in R_k$. 

The canonical projection may be written explicitly as
\begin{equation*}
    \begin{gathered}
        \pi : E(R_k) \rightarrow E(\F_q),\\
        (\alpha_x + \beta_x \eps:\alpha_y + \beta_y \eps:\alpha_z + \beta_z \eps) \mapsto (\alpha_x:\alpha_y:\alpha_z).
    \end{gathered}
\end{equation*}
From \cite[Sec. 4]{lenstra86} we know that its fibers have size $q^{k-1}$, so in particular $E^\infty = \pi^{-1}(\ocal)$ is a $p$-subgroup of $E(R_k)$.
Moreover, the structure of $E^\infty$ often prescribe the structure of the whole group, as we have the short exact sequence of groups
\[ 0 \rightarrow E^\infty(R_k) \xhookrightarrow{i} E(R_k) \overset{\pi}{\twoheadrightarrow} E(\F_q) \rightarrow 0. \]
When the above sequence splits, we have
\[ E(R_k) \cong E(\F_q) \oplus E^\infty. \]
This is always the case when
\begin{equation} \label{eq:CoprimalityCondition}
    \gcd\big(\#E(\F_q), p\big) = 1,
\end{equation}
which happens with overwhelming probability for large primes $p$, and it is always satisfied by elliptic curves of cryptographic interest.
We will therefore address the group structure of elliptic curves underlying this condition.

We can now apply Corollary \ref{cor:psi_i_zero} to this setting (with $l=1$).
\begin{cor}
    \label{cor:pmul_jump}
    Let $P = \big(X:1:\ff(X)\big) \in E^\infty$ be a point.
    Then
    \[ (pP)_x \equiv \psi_p(p) X^p \bmod X^{p+1}. \]
\end{cor}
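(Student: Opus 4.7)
The plan is to combine the general expansion of $(nP)_x$ from Lemma \ref{lem:psi_esun} with the divisibility Corollary \ref{cor:psi_i_zero}, exploiting the crucial fact that the ambient ring $R_k$ has characteristic $p$.

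First I would write, by Lemma \ref{lem:psi_esun} applied to $n=p$,
\[ (pP)_x = \sum_{i=1}^{k-1} \psi_i(p)\, X^i. \]
To get the claimed congruence modulo $X^{p+1}$, I only need to control the coefficients of $X^i$ for $1 \leq i \leq p-1$ and to identify the coefficient of $X^p$.

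Next, I would invoke Corollary \ref{cor:psi_i_zero} with $l=1$: for every $1 \leq i < p$ we have $\psi_i(p) \equiv 0 \bmod p$. Combined with the remark that $\psi_i(\bar n) \in \Z[a_1,\dots,a_6]$ for any integer input $\bar n$, this means $\psi_i(p) = p \cdot r_i$ for some $r_i \in \Z[a_1,\dots,a_6]$. Since $R_k = \F_q[\eps]/(\eps^k)$ has characteristic $p$, the image of $p \cdot r_i$ in $R_k$ is zero. Hence $\psi_i(p)$ vanishes in $R_k$ for every $1 \leq i < p$, and the expansion above collapses to
\[ (pP)_x = \sum_{i=p}^{k-1} \psi_i(p)\, X^i. \]
Reducing modulo $X^{p+1}$ leaves only the $i=p$ term, giving the desired equality $(pP)_x \equiv \psi_p(p) X^p \bmod X^{p+1}$.

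There is no real obstacle here; the statement is essentially a translation of Corollary \ref{cor:psi_i_zero} into the specific ring $R_k$. The only point that deserves a brief mention is why passing from the congruence modulo $p$ in $\Z[a_1,\dots,a_6]$ to a genuine equality in $R_k$ is legitimate, which is precisely the characteristic-$p$ property of $\F_q$ (and hence of $R_k$). Everything else is a direct reading of the coefficients.
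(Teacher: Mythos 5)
Your argument is exactly the one the paper intends: it introduces the corollary with the single remark that it follows from Corollary \ref{cor:psi_i_zero} applied with $l=1$, and your proposal simply spells out that application, using the expansion from Lemma \ref{lem:psi_esun} and the fact that $R_k$ has characteristic $p$ to kill the coefficients $\psi_i(p)$ for $i<p$. The proof is correct and matches the paper's approach.
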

This result is sufficient to compute the group structure of $E^\infty$ whenever considered exponent $k$ is smaller than the ring characteristic $p$.
\begin{prop}
    \label{prop:casofacile}
    Let $E$ be an elliptic curve over $R_k$ where $k \leq p$. Then we have the group isomorphism
    $$E^\infty \cong (\F_p)^{e(k-1)}.$$
\end{prop}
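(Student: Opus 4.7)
The plan is to show that every point of $E^\infty$ has order dividing $p$; combined with the known order $\#E^\infty = q^{k-1} = p^{e(k-1)}$, this forces $E^\infty$ to be an elementary abelian $p$-group of the stated rank.

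First I would fix an arbitrary $P = \big(X:1:\ff(X)\big) \in E^\infty$ and write, by Lemma \ref{lem:psi_esun},
\[ (pP)_x = \sum_{i=1}^{k-1} \psi_i(p)\, X^i. \]
Since we are under the hypothesis $k \leq p$, every index in the sum satisfies $1 \leq i \leq k-1 < p$. Corollary \ref{cor:psi_i_zero} (applied with $l=1$) then gives $\psi_i(p) \equiv 0 \bmod p$ for each such $i$. But $R_k = \F_q[x]/(x^k)$ has characteristic $p$, so the coefficients $\psi_i(p)$, viewed in $R_k$, vanish identically. Consequently $(pP)_x = 0$.

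Next, I would use Proposition \ref{prop:zfx}: since $x^3 \mid \ff(x)$, we get $(pP)_z = \ff\big((pP)_x\big) = \ff(0) = 0$. Hence the standard form of $pP$ is $(0:1:0) = \ocal$, so $P$ has order dividing $p$. As $P$ was arbitrary, $E^\infty$ has exponent dividing $p$, i.e., it is a vector space over $\F_p$.

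Finally, to pin down its dimension, I would invoke the fact, recalled from \cite{lenstra86}, that $\#E^\infty = q^{k-1} = p^{e(k-1)}$. An $\F_p$-vector space of this cardinality is isomorphic to $(\F_p)^{e(k-1)}$, which gives the thesis. The argument is essentially a clean assembly of the corollaries already established; the only subtlety to be careful about is the passage from ``$p \mid \psi_i(p)$ over $\Z[a_1,\dots,a_6]$'' to ``$\psi_i(p)=0$ in $R_k$'', which is where the characteristic-$p$ hypothesis on the residue field and the inequality $k \leq p$ are both used decisively.
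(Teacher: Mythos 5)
Your proposal is correct and follows essentially the same route as the paper: the paper's proof invokes Corollary \ref{cor:pmul_jump} (itself the $l=1$ specialization of Corollary \ref{cor:psi_i_zero}) to conclude $(pP)_x \equiv \psi_p(p)X^p \bmod X^{p+1}$, which vanishes since $X^p \in \m^p = (0)$ for $k \leq p$, and then uses the same cardinality count $\#E^\infty = q^{k-1}$. Your unpacking of the step from $p \mid \psi_i(p)$ in $\Z[a_1,\dots,a_6]$ to $\psi_i(p) = 0$ in the characteristic-$p$ ring $R_k$ is exactly the content the paper delegates to that corollary.
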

\begin{proof}
    From Corollary \ref{cor:pmul_jump} every point has order $p$. 
    We already observed that $E^\infty$ is a $p$-group of size $q^{k-1} = p^{e(k-1)}$, from which the thesis follows.
\end{proof}
To address the cases with $k > p$ we by introduce a way for "counting the divisibility" of points with respect to $\eps$. 
\begin{defi}
    Let $r \in R_k \backslash \{0\}$.
    We define its \emph{minimal degree} $\vi(r)$ as the maximal $i \geq 0$ such that $\eps^i | r$.
    We also define $\vi(0) = \infty$.
    Finally, for every point $P \in E^\infty$, we define $\vi(P) = \vi(P_x)$.
\end{defi}
We notice that $\vi$ is almost a valuation on $R_k$, as it satisfies
\[ \vi(xy) \geq \vi(x)\vi(y), \quad \text{and} \quad \vi(x+y) \geq \min\{ \vi(x), \vi(y) \}. \]
\begin{remark}
    $\vi(P) = \infty$ if and only if $P = \ocal$.
\end{remark}
\begin{lemma}
    \label{lem:degsum}
    Let $P,Q \in E^\infty$ be two points with $\vi(P) \neq \vi(Q)$. Then
    $$\vi(P+Q) = \min\{\vi(P), \vi(Q)\}.$$
\end{lemma}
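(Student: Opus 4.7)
My strategy is to establish the refined expansion
\[ (P+Q)_x = P_x + Q_x + P_x Q_x \cdot G \]
for some $G \in \rcal$, from which the lemma follows at once. By the commutativity of $+$, I may assume without loss of generality that $a := \vi(P) < \vi(Q) =: b$, with $a,b \geq 1$ since both points lie in $E^\infty$.

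To obtain the expansion I would argue symbolically. Introduce free indeterminates $X_1,X_2$ and consider the formal points $\tilde P = (X_1:1:\ff(X_1))$ and $\tilde Q = (X_2:1:\ff(X_2))$, whose projective sum $(\tilde X_3 : \tilde Y_3 : \tilde Z_3)$ is produced by Proposition \ref{prop:shortsum}. Since $\tilde Y_3(0,0)$ is a unit (it gives the $y$-coordinate of $\ocal + \ocal = \ocal$), its inverse is a well-defined element of $\rcal[[X_1,X_2]]$ and
\[ F(X_1,X_2) := \tilde X_3 / \tilde Y_3 \in \rcal[[X_1,X_2]] \]
is a legitimate formal power series. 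The group-identity axiom $\ocal + R = R$, applied to the specialisations $X_1 = 0$ and $X_2 = 0$ (legitimate because $\ff(0) = 0$, so $\tilde P|_{X_1=0} = \ocal$, and symmetrically for $\tilde Q$), then translates into the pair of symbolic identities
\[ F(0,X_2) = X_2 \quad\text{and}\quad F(X_1,0) = X_1. \]
Consequently $X_1 X_2$ divides $F(X_1,X_2) - X_1 - X_2$ in $\rcal[[X_1,X_2]]$, and the specialisation $X_1 \mapsto P_x$, $X_2 \mapsto Q_x$ yields the claimed expansion.

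With the expansion in hand the conclusion is a matter of comparing valuations: since $a \neq b$, the leading $\eps^a$-coefficient of $P_x$ cannot be cancelled by $Q_x$, so $\vi(P_x + Q_x) = a$, whereas $\vi(P_x Q_x \cdot G) \geq \vi(P_x) + \vi(Q_x) = a+b > a$. Therefore $\vi((P+Q)_x) = a = \min\{\vi(P),\vi(Q)\}$, as required. The delicate point of the plan is the symbolic identity $F(0,X_2) = X_2$; it relies on the fact that Proposition \ref{prop:shortsum} gives a polynomial addition formula, so its evaluation in the enlarged commutative $\rcal$-algebra $\rcal[[X_1,X_2]]$ remains a valid instance of the group law, and the identity reduces to the universal property of $\ocal$.
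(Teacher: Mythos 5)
Your route is genuinely different from the paper's. The paper proves the key congruence $(P+Q)_x \equiv P_x + Q_x \bmod \eps^{m+1}$ (with $m = \min\{\vi(P),\vi(Q)\}$) by direct inspection of the explicit addition formulas: it reduces Proposition \ref{prop:shortsum} modulo the ideal $I_P = \langle X_1^2, Z_1\rangle$ (Proposition \ref{prop:sommemodulop2}) and observes that every correction term is divisible by $\eps^{m+1}$ (Proposition \ref{prop:inspection_eps}). You instead derive the stronger expansion $(P+Q)_x = P_x + Q_x + P_xQ_x\,G$ abstractly, in the style of a formal group law, from the two specialisation identities $F(0,X_2)=X_2$ and $F(X_1,0)=X_1$. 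Your final valuation comparison coincides with the paper's. What your approach buys is independence from the explicit shape of the addition polynomials (and a cleaner error term, divisible by $P_xQ_x$ rather than merely lying in $\langle \eps^{m+1}\rangle$); what the paper's buys is that the needed computation is already recorded in its appendix and requires no auxiliary base change.

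The one step you rightly flag as delicate does need repair as written. The formal point $\tilde Q = (X_2:1:\ff(X_2))$, with $\ff$ the \emph{truncated} polynomial of Proposition \ref{prop:zfx}, does not satisfy the Weierstrass equation over $\rcal[[X_1,X_2]]$: the identity $z=\ff(x)$ is obtained there only because $x,z\in\m$ and $\m^k=0$, which fails for a free indeterminate. Since the addition formulas are only guaranteed to obey the group axioms on actual curve points, you cannot invoke $\ocal + R = R$ for $\tilde Q$ verbatim. Two standard fixes are available: either replace $\ff$ by the exact power-series solution $\hat\ff\in\rcal[[x]]$ of the iteration in Proposition \ref{prop:zfx} (so that $\tilde P,\tilde Q$ become genuine points of $E$ over the local ring $\rcal[[X_1,X_2]]$, where Lenstra's results give the group law with identity $\ocal$), or work over $\rcal[X_1,X_2]/(X_1,X_2)^k$, where the truncated $\ff$ does satisfy the equation. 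In either case $\hat\ff(P_x)=\ff(P_x)$ under the specialisation $X_1\mapsto P_x$, $X_2\mapsto Q_x$ (these being nilpotent), so the expansion descends to $\rcal$ and the rest of your argument is correct.
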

\begin{proof}
    The statement is trivial if either $P$ or $Q$ is $\ocal$, so let us assume $P,Q \neq \ocal$.
    Let us denote $m = \min\{\vi(P), \vi(Q)\}$.
    Then $e^{m+1}$ divides $P_z$, $Q_z$ and all the products involving $x$ and $z$ coordinates of both the points. A close inspection of the formulae (detailed in Proposition \ref{prop:inspection_eps}, with $P_1=P$ and $P_2=Q$) shows that
    \[
    (P+Q)_x \equiv P_x + Q_x \bmod \eps^{m+1}
    \]
    Since by assumption $\vi(P_x) \neq \vi(Q_x)$, the conclusion follows.
\end{proof}
\begin{lemma}
    \label{lem:samedegsum}
    Let $P,Q \in E^\infty$ be points with $\vi(P) = \vi(Q) = m < \infty$.
    Let $c_p, c_q \in \F_q$ be the coefficients such that
    \[ P_x \equiv c_p\eps^m \bmod \eps^{m+1} \quad \textnormal{and} \quad Q_x \equiv c_q\eps^m \ \bmod \eps^{m+1}. \]
    Then we have
    $$(P+Q)_x = (c_p + c_q)\eps^m \bmod \eps^{m+1}.$$
\end{lemma}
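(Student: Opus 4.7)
The plan is to mirror the strategy of Lemma \ref{lem:degsum}, which already showed that $(P+Q)_x$ is well-approximated by $P_x + Q_x$ modulo a suitable power of $\eps$. The new content is that when both valuations coincide, the leading coefficients must genuinely add inside $\F_q$, so the only way for $\vi(P+Q)$ to exceed $m$ is for $c_p + c_q = 0$.

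First I would record the elementary bounds on the entries of $P$ and $Q$. Since $P, Q \in E^\infty$ we have $m \geq 1$, and by Proposition \ref{prop:zfx} we know $x^3 \mid \ff(x)$, so $P_z = \ff(P_x)$ satisfies $\vi(P_z) \geq 3m \geq m+1$, and likewise for $Q_z$. Moreover, any monomial in the addition law involving at least two factors among $\{P_x, P_z, Q_x, Q_z\}$ has $\vi$-valuation at least $2m \geq m+1$. The only surviving contributions modulo $\eps^{m+1}$ are therefore the purely linear terms in $P_x$ and in $Q_x$.

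Next I would invoke the explicit inspection carried out in Proposition \ref{prop:inspection_eps} (already used in Lemma \ref{lem:degsum}, applied here with $P_1 = P$ and $P_2 = Q$) to deduce that
\[ (P+Q)_x \equiv P_x + Q_x \pmod{\eps^{m+1}}. \]
Substituting $P_x \equiv c_p \eps^m$ and $Q_x \equiv c_q \eps^m$ modulo $\eps^{m+1}$ gives
\[ (P+Q)_x \equiv (c_p + c_q)\eps^m \pmod{\eps^{m+1}}, \]
which is exactly the claim.

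The argument is essentially a re-reading of the same formula inspection already used for the previous lemma; there is no genuine new obstacle. The only subtle point worth emphasising is that when $c_p + c_q = 0$ in $\F_q$, the conclusion still says the correct thing, namely that $\vi(P+Q) \geq m+1$ strictly, which is the only mechanism allowing the valuation to increase under addition of equally-valued points, and which will be crucial in the subsequent analysis of the group structure of $E^\infty$.
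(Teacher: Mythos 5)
Your proposal is correct and follows essentially the same route as the paper, which proves this lemma by appealing to the same argument as Lemma \ref{lem:degsum}: invoke Proposition \ref{prop:inspection_eps} to get $(P+Q)_x \equiv P_x + Q_x \bmod \eps^{m+1}$ and then substitute the leading terms. Your additional remarks on why $P_z$, $Q_z$ and all cross-products vanish modulo $\eps^{m+1}$ simply make explicit what the paper's inspection already contains.
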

\begin{proof}
    It follows from the same argument of Lemma \ref{lem:degsum}.
\end{proof}
\begin{remark}
    In the notation of Lemma \ref{lem:samedegsum}, if $c_p + c_q \neq 0$ then 
    \[ \vi(P+Q) = m = \vi(P) = \vi(Q). \]
\end{remark}
\begin{lemma}
    \label{lem:deg_nonpmul}
    Let $n \in \N^*$ such that $p \nmid n$. Then
    $$\vi(nP) = \vi(P).$$ 
\end{lemma}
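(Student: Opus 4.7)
The plan is to read the minimal $\eps$-degree of $nP$ directly off the multiplication-polynomial expansion of $(nP)_x$, isolating the linear-in-$P_x$ term as the dominant contribution. The case $P = \ocal$ is trivial (both sides equal $\infty$), so I would assume $P \neq \ocal$ and set $m = \vi(P) \geq 1$; the lower bound holds because $P_x \in \m = (\eps)$ for every point of $E^{\infty}$.

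By Lemma \ref{lem:psi_esun} together with Lemma \ref{lem:psi1}, one expands
\[
(nP)_x \;=\; n\, P_x \;+\; \sum_{i=2}^{k-1} \psi_i(n)\, P_x^i.
\]
The coefficients $\psi_i(n)$ are bona fide elements of $R_k$: the remark following Theorem \ref{teo:psi_int} guarantees that $\psi_i$ evaluated at any positive integer lies in $\Z[a_1,\dots,a_6]$, so the identity above is a genuine equality in $R_k$, not merely in a fraction field.

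The strategy is now to bound the tail and the linear head separately. For every $i \geq 2$, the estimate $\vi(\psi_i(n)\, P_x^i) \geq \vi(P_x^i) \geq 2m \geq m+1$ shows that the whole tail has minimal degree at least $m+1$. For the head, $\chr(R_k) = p$ together with $p \nmid n$ implies that $n$ reduces to a nonzero element of $\F_p \subset R_k$ and is therefore a unit in $R_k$; consequently $\vi(n\, P_x) = \vi(P_x) = m$. Since the tail has strictly larger minimal degree than the head, the subadditivity of $\vi$ (recorded just after its definition) gives $\vi((nP)_x) = m = \vi(P)$, which is the claim.

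There is no substantial obstacle here: once the multiplication-polynomial description of $(nP)_x$ is in hand, the proof reduces to a one-line $\eps$-adic estimate. The only subtle point worth flagging is the one already noted, namely checking that each $\psi_i(n)$ has integer (rather than merely rational) coefficients in the $a_j$, so that plugging $P_x \in \m$ into the expansion makes unambiguous sense inside $R_k$.
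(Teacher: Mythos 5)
Your proof is correct and follows essentially the same route as the paper: both isolate the linear term $nP_x$ in the expansion of $(nP)_x$ (the paper phrases this as $(nP)_x \equiv nP_x \bmod P_x^2$), use that $p \nmid n$ makes $n$ a unit in $R_k$, and conclude via the ultrametric property of $\vi$. The only superfluous step is your worry about integrality of the $\psi_i(n)$: by Lemma \ref{lem:psi_esun} these coefficients are by definition elements of $\rcal$, so the expansion is automatically a valid identity in $R_k$.
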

\begin{proof}
    Thanks to Lemma \ref{lem:psi1}, we have
    \[ (nP)_x \equiv n P_x \bmod P_x^2. \]
    Since $n$ is invertible in $\F_q$, we have $\vi(nP_x) = \vi(P_x)$.
    If $P = \ocal$ the thesis is clear.
    Otherwise, we have $\vi(nP_x) < \vi(P_x^2)$, therefore by Lemma \ref{lem:degsum} we have
    \[ \vi\big((nP)_x\big) = \vi(nP_x) = \vi(P_x). \]
    The quantity on the left side is $\vi(nP)$, while the right one is $\vi(P)$, so they are equal.
\end{proof}
\begin{defi}
    We define the \emph{trajectory} of $P \in E^\infty$ as
    \[ \trj(P) = \{ \vi(nP) \}_{n \in \N} \setminus \{ \infty \}. \] 
\end{defi}
\begin{example}
    If a point $P$ has order $p$ and $\vi(P) = m$, by applying Lemma \ref{lem:deg_nonpmul} we see that $\trj(P) = \{m\}$.
\end{example}
\begin{lemma}
    \label{lem:disj_trj}
    Let $P,Q\in E^\infty$ be points with $\trj(P) \cap \trj(Q) = \emptyset$. For every $n,m \in \Z$ we have
    $nP + mQ = \ocal$ if and only if $nP = mQ = \ocal$.
\end{lemma}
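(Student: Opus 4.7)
The backward direction is immediate, since $nP=mQ=\ocal$ gives $nP+mQ=\ocal+\ocal=\ocal$, so the entire argument is about the forward direction. My plan is to argue by contradiction, exploiting the fact that $\vi$ detects the identity (i.e. $\vi(R)=\infty$ iff $R=\ocal$) and that Lemma \ref{lem:degsum} tells us exactly how $\vi$ behaves on sums of points with distinct minimal degree.

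Assume $nP+mQ=\ocal$ and suppose, for contradiction, that not both of $nP$ and $mQ$ equal $\ocal$. First I would dispose of the case in which exactly one of them is $\ocal$: if for instance $nP=\ocal$ and $mQ\neq\ocal$, then $nP+mQ=mQ\neq\ocal$, contradicting the hypothesis, and the symmetric case is identical. Hence I am reduced to the situation in which $nP\neq\ocal$ and $mQ\neq\ocal$, so that $\vi(nP)$ and $\vi(mQ)$ are both finite and belong to $\trj(P)$ and $\trj(Q)$ respectively (this is precisely how the trajectory is defined, modulo the removal of $\infty$).

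The disjointness assumption $\trj(P)\cap\trj(Q)=\emptyset$ then forces $\vi(nP)\neq\vi(mQ)$. Applying Lemma \ref{lem:degsum} to $nP$ and $mQ$ yields
\[
\vi(nP+mQ)=\min\{\vi(nP),\vi(mQ)\}<\infty,
\]
which contradicts $nP+mQ=\ocal$ (whose minimal degree is $\infty$). This contradiction closes the forward direction.

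There is no real obstacle here: the statement is essentially a direct packaging of Lemma \ref{lem:degsum} together with the definition of $\trj$, and the only mild subtlety is remembering to treat the degenerate case where one of $nP$, $mQ$ is $\ocal$ separately, since Lemma \ref{lem:degsum} is stated only for two honest points (equivalently, for finite $\vi$); that case, however, is trivial.
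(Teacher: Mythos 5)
Your proof is correct and follows essentially the same route as the paper's: both reduce the forward direction to the observation that disjoint trajectories force $\vi(nP)\neq\vi(mQ)$ and then invoke Lemma \ref{lem:degsum} to get $\vi(nP+mQ)=\min\{\vi(nP),\vi(mQ)\}$, which must be $\infty$. Your separate treatment of the degenerate case where exactly one of $nP$, $mQ$ is $\ocal$ is a small extra precaution that the paper absorbs into Lemma \ref{lem:degsum} (whose proof already covers points equal to $\ocal$), but the argument is the same.
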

\begin{proof}
    If both $nP$ and $mQ$ are $\ocal$, then also their sum clearly is.
    On the other side, since $\trj(P) \cap \trj(Q) = \emptyset$ we have $\vi(nP) \neq \vi(mQ)$, which by Lemma \ref{lem:degsum} implies
    \[ \infty = \vi(nP + mQ) = \min\{ \vi(nP), \vi(mQ) \}. \]
    Therefore we have $\vi(nP) = \vi(mQ) = \infty$, i.e. $nP = mQ = \ocal$.
\end{proof}

As we will see shortly, the group structure of $E^\infty$ depends on $\vi\big(\psi_p(p)\big)$.
There are two possible cases:
\begin{enumerate}[ label=$(P_{\arabic*})$ ]
    \item \label{case:main} $\vi\big(\psi_p(p)\big) = 0$, i.e. $\psi_p(p) \in R_k^*$, or
    \item \label{case:exceptional} $\vi\big(\psi_p(p)\big) > 0$, i.e. $\eps | \psi_p(p)$.
\end{enumerate}
The first case will be referred to as the \emph{main case}, as it occurs with overwhelming probability with a uniform choice of the curve coefficients, and it will be discussed in Section \ref{subs:main_case}. 
The second case will be referred to as the \emph{exceptional case}, and examined in Section \ref{subs:except_case}.

\subsection{Main case} \label{subs:main_case}
In this section, we focus on case \ref{case:main}, namely we will assume that 
\begin{equation} \label{eq:MainCondition}
    \psi_p(p) \in R_k^*.
\end{equation} 
The main idea is to 
use Corollary \ref{cor:pmul_jump} to partition the numbers up to $k-1$ in different point trajectories.
For each trajectory, we will pick $e$ independent points $P_i$ such that $\vi(P_i)$ is the minimal value of the trajectory and show that these points generate the whole group.
\begin{lemma}
    \label{lem:deg_pmul1}
    Let $P \in E^\infty$ be a point. For every $i \in \N$, we have
    $$\vi(p^i P) = 
    \begin{cases}
    p^i\vi(P) &\text{ if } \ p^i\vi(P) < k, \\
    \infty &\text{ otherwise.}
    \end{cases}
    $$
\end{lemma}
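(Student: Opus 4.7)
The plan is to prove this by induction on $i$, with the base case $i=0$ being immediate. The key tool is Corollary \ref{cor:pmul_jump}, which describes the leading term of $(pQ)_x$ for any $Q \in E^\infty$, together with the main-case assumption $\psi_p(p) \in R_k^*$ (i.e.\ $\vi(\psi_p(p)) = 0$). Lemma \ref{lem:degsum} will play the role of an ultrametric triangle inequality when extracting the precise minimal degree of a sum of terms with distinct $\vi$'s.

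For the inductive step, I would set $Q := p^i P$ and split on the induction hypothesis applied to $Q$. If $p^i \vi(P) \geq k$, then $Q = \ocal$, so $p^{i+1}P = \ocal$ and $\vi(p^{i+1}P) = \infty$, which agrees with the claimed formula (since then also $p^{i+1}\vi(P) \geq k$). Otherwise $\vi(Q) = m' := p^i \vi(P) < k$, and the task is to evaluate $\vi(pQ)$.

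Here I would invoke Corollary \ref{cor:pmul_jump} for $Q$, writing
\[ (pQ)_x = \psi_p(p)\, Q_x^p + r, \]
where $r$ is divisible by $Q_x^{p+1}$ (so either $r = 0$ or $\vi(r) \geq (p+1)m'$). If $pm' \geq k$, then $Q_x^p = 0$ in $R_k$ and a fortiori $r = 0$, so $(pQ)_x = 0$ and thus $pQ = \ocal$ (using that $\ff$ has no constant term, so $\ff(0) = 0$), matching the formula since $p^{i+1}\vi(P) \geq k$. If instead $pm' < k$, the leading term satisfies $\vi(\psi_p(p) Q_x^p) = pm'$ (thanks to $\psi_p(p)$ being a unit), while every remaining contribution has $\vi$ strictly larger. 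By Lemma \ref{lem:degsum} the minimum dominates, giving $\vi(p^{i+1}P) = pm' = p^{i+1}\vi(P)$.

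The main obstacle is a bookkeeping one rather than a conceptual one: one must be confident that in the subcase $pm' \geq k$ the remainder $r$ also vanishes (so that $(pQ)_x$ is genuinely $0$, not merely of high minimal degree), and that in the subcase $pm' < k$ the unit $\psi_p(p)$ prevents cancellation of the leading $Q_x^p$ contribution. Both are immediate from the main-case hypothesis and the fact that $Q_x^p$ is nonzero with $\vi(Q_x^p) = pm'$ as soon as $pm' < k$. The induction then closes uniformly.
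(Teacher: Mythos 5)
Your argument is correct and follows essentially the same route as the paper: induction on $i$, with the inductive step handled by applying Corollary \ref{cor:pmul_jump} to $Q = p^iP$ and using the main-case hypothesis $\psi_p(p)\in R_k^*$ to see that the leading term $\psi_p(p)Q_x^p$ has minimal degree exactly $p\,\vi(Q)$ while all other contributions are of strictly higher degree (the paper simply works modulo $\eps^{p\vi(Q)+1}$ rather than splitting off a remainder $r$). Your explicit treatment of the degenerate cases where $p^i\vi(P)\geq k$ or $p^{i+1}\vi(P)\geq k$ is a harmless elaboration of what the paper leaves implicit.
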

\begin{proof}
    We prove it by induction on $i$.
    The base step $i=0$ holds identically.
    Let us now assume this holds for $i = j-1$. Then
    \[
    \vi(p^jP) = \vi\big(p(p^{j-1}P)\big) = \vi(pQ), \quad \textnormal{where} \quad Q = p^{j-1}P.
    \]
    By inductive hypothesis $\vi(Q) = p^{j-1}m$ so we can write
    \[
    Q_x = c_m \eps^{p^{j-1} m} \bmod \eps^{p^{j-1}m + 1}
    \]
    for some $c_m \in R_k^*$. Then by Corollary \ref{cor:pmul_jump} we obtain
    \[ 
    (pQ)_x = \psi_p(p)(c_m)^p \eps^{p^jm} \bmod \eps^{p^jm + 1}.
    \]
    By assumption \eqref{eq:MainCondition} we know that $\psi_p(p) \in R_k^*$, which implies that $\vi(pQ) = p^jm = p\vi(Q)$ if $pm < k$, and $\vi(pQ) = \infty$ otherwise.
\end{proof}
\begin{prop}
    \label{prop:ordp}
    For every $1 \leq m \leq k-1$, if $P \in E^\infty$ has minimal degree $m = \vi(P)$, then its order is
    \[ \ord(P) = p^{l_m}, \quad \textnormal{where} \quad l_m = \left\lfloor\log_p  \frac{k-1}{m}\right\rfloor + 1. \]
\end{prop}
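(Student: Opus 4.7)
The plan is to deduce the result almost directly from Lemma \ref{lem:deg_pmul1}, together with the fact that $E^\infty$ is a $p$-group. First I would observe that since $\#E^\infty = q^{k-1} = p^{e(k-1)}$, every element of $E^\infty$ has order a power of $p$; in particular $\ord(P) = p^j$ for some $j \geq 0$, and it suffices to identify $j$ as the minimal non-negative integer such that $p^j P = \ocal$.

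Next, I would translate the condition $p^j P = \ocal$ into a statement about $\vi$, using the remark that $\vi(Q) = \infty$ if and only if $Q = \ocal$. Applying Lemma \ref{lem:deg_pmul1} to the point $P$ (which is legitimate under the main-case hypothesis \eqref{eq:MainCondition}), we have
\[
p^i P = \ocal \iff \vi(p^i P) = \infty \iff p^i m \geq k.
\]
Hence $j$ is the smallest non-negative integer with $p^j m \geq k$.

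The remaining step is the elementary arithmetic manipulation identifying this $j$ with $l_m$. Since $p^j$ and $m$ are positive integers, the condition $p^j m \geq k$ is equivalent to $p^j m > k-1$, i.e.\ $p^j > (k-1)/m$. The least non-negative integer satisfying this is $\lfloor \log_p((k-1)/m) \rfloor + 1$, which is precisely $l_m$. Note that $j \geq 1$ is automatic because $m \leq k-1 < k$ gives $p^0 m = m < k$, so one genuinely needs at least one multiplication by $p$ to kill $P$.

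I do not anticipate any real obstacle here: both the reduction to a power of $p$ and the characterization of $p^j P = \ocal$ via $\vi$ are immediate, and the passage from ``smallest $j$ with $p^j > (k-1)/m$'' to ``$\lfloor \log_p((k-1)/m) \rfloor + 1$'' is routine. The only mild subtlety is to be careful that one uses the strict inequality $p^j > (k-1)/m$ (rather than $\geq k/m$), so that the floor formula matches; writing this down explicitly is the safest way to avoid off-by-one errors.
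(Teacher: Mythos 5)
Your proposal is correct and follows essentially the same route as the paper: both arguments reduce the order computation to Lemma \ref{lem:deg_pmul1} (valid under the main-case hypothesis \eqref{eq:MainCondition}), characterize $p^jP=\ocal$ by $p^jm \geq k$, and verify that $l_m$ is the least such $j$ via the inequalities $mp^{l_m-1}\leq k-1 < mp^{l_m}$. Your explicit remark that $p^jm\geq k$ should be read as the strict inequality $p^jm>k-1$ is exactly the off-by-one check the paper's displayed inequalities perform.
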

\begin{proof}
    By definition, the integer $l_m$ is the largest integer such that $mp^{l_m-1} \leq k-1$, while $mp^{l_m} > k-1$. In fact, we have
    \[
    mp^{l_m-1} = mp^{\left\lfloor \log_p \frac{k-1}{m}\right\rfloor} \leq m\frac{k-1}{m} = k-1,
    \]
    and
    \[
    mp^{l_m} = mp^{\left\lfloor \log_p \frac{k-1}{m}\right\rfloor + 1} > m\frac{k-1}{m} = k-1.
    \]
    From Lemma \ref{lem:deg_pmul1} we see that $\vi(p^{l_m - 1}P) = mp^{l_m - 1} \leq k-1$, and then $p^{l_m - 1}P \neq \ocal$, while $\vi(p^{l_m}P) = mp^{l_m} > k-1$, hence $p^{l_m} = \ocal$. The thesis follows from the fact that $E^\infty$ is a $p$-group.
\end{proof}
\begin{remark}
    For every $1 \leq m \leq k-1$, the quantity $l_m$ given in Proposition \ref{prop:ordp} is well defined, since the point $P = \big(\eps^m:1:\ff(\eps^m)\big)$ satisfies $\vi(P) = m$.
\end{remark}
\begin{lemma}
    \label{lem:trj_case1}
    For every $1 \leq m \leq k-1$, if $P \in E^\infty$ has minimal degree $m = \vi(P)$, then
    \[ \trj(P) = \{mp^i\}_{i < l_m}. \]
\end{lemma}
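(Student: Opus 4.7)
The plan is to reduce the computation of every $\vi(nP)$ to the computation of a $\vi(p^i P)$ by stripping off the part of $n$ coprime to $p$, and then to invoke the previous two lemmas directly. First I would write any $n \in \N^*$ uniquely as $n = p^i n'$ with $\gcd(n', p) = 1$, so that $nP = n'(p^i P)$. Since $p \nmid n'$, Lemma \ref{lem:deg_nonpmul} applied to the point $p^i P$ gives $\vi(nP) = \vi(p^i P)$. Consequently, every finite value appearing in $\trj(P)$ coincides with some $\vi(p^i P)$ for $i \geq 0$, and conversely every such value is realized (take $n = p^i$).

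Next, Lemma \ref{lem:deg_pmul1} asserts that $\vi(p^i P) = p^i m$ whenever $p^i m < k$, and $\vi(p^i P) = \infty$ otherwise. Since $m$, $p^i$ and $k$ are integers, the inequality $p^i m < k$ is equivalent to $p^i m \leq k-1$, i.e.\ $p^i \leq (k-1)/m$, which, by taking logarithms in base $p$ and using that $l_m = \lfloor \log_p((k-1)/m) \rfloor + 1$, is precisely the condition $i \leq l_m - 1$, i.e.\ $i < l_m$. Hence $\vi(p^i P) = mp^i$ for $0 \leq i < l_m$ and $p^i P = \ocal$ for $i \geq l_m$.

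Combining the two steps, the set $\trj(P)$ — obtained by discarding the $\infty$ values — is exactly $\{mp^i : 0 \leq i < l_m\}$, which is the claim. The only subtle point I anticipate is matching the strict inequality $p^i m < k$ from Lemma \ref{lem:deg_pmul1} with the definition of $l_m$ given in Proposition \ref{prop:ordp}; this is a routine integer computation, and once it is in place the rest of the proof is simply a citation of the preceding lemmas.
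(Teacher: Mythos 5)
Your proof is correct and follows essentially the same route as the paper, which simply cites Lemma \ref{lem:deg_pmul1} and Proposition \ref{prop:ordp}. If anything, your version is more complete: you make explicit the reduction of arbitrary $n = p^i n'$ to the case $n = p^i$ via Lemma \ref{lem:deg_nonpmul}, a step the paper's one-line proof leaves implicit, and your integer computation matching $p^i m < k$ with $i < l_m$ is exactly the one carried out inside the proof of Proposition \ref{prop:ordp}.
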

\begin{proof}
    It follows from Lemma \ref{lem:deg_pmul1} and Proposition \ref{prop:ordp}.
\end{proof}
\begin{lemma}
    \label{lem:trj_size}
    For every $1 \leq m \leq k-1$, if $P \in E^\infty$ has minimal degree $m = \vi(P)$, then $\#\trj(P) = l_m$.
\end{lemma}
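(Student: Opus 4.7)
The plan is to observe that this lemma is essentially a counting consequence of Lemma \ref{lem:trj_case1}, and reduces to showing that the indexing map $i \mapsto mp^i$ is injective on $\{0, 1, \dots, l_m-1\}$.

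First, I would apply Lemma \ref{lem:trj_case1} directly to obtain
\[ \trj(P) = \{mp^i \mid 0 \leq i < l_m\}. \]
This already exhibits $\trj(P)$ as the image of $\{0, 1, \dots, l_m-1\}$ under the map $\varphi : i \mapsto mp^i$, so $\#\trj(P) \leq l_m$ holds trivially. The only thing left to verify is that $\varphi$ is injective.

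Next, I would establish injectivity by noting that $m \geq 1$ and $p \geq 2$, so $\varphi$ is a strictly increasing function from $\N$ to $\N$. Indeed, for $0 \leq i < j < l_m$ we have $mp^j = mp^i \cdot p^{j-i} > mp^i$ since $p^{j-i} \geq 2$ and $mp^i \geq 1$. Hence the $l_m$ values $m, mp, mp^2, \dots, mp^{l_m-1}$ are pairwise distinct integers, giving $\#\trj(P) = l_m$.

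There is no real obstacle here; the entire content of the lemma sits in the parameterization already delivered by Lemma \ref{lem:trj_case1}, and the remaining observation is a one-line combinatorial count. The only mild subtlety to keep in mind is that each element $mp^i$ of the trajectory is an integer in the range $\{1, \dots, k-1\}$ (by the definition of $l_m$ recalled in Proposition \ref{prop:ordp}), so there is no risk that two distinct indices collapse to the same valuation value modulo some truncation: the values $\vi(p^iP)$ are genuine integers below $k$, not elements of $\Z/k\Z$.
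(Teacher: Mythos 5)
Your proof is correct and follows essentially the same route as the paper: both invoke Lemma \ref{lem:trj_case1} to write $\trj(P) = \{mp^i\}_{i<l_m}$ and then observe that these $l_m$ integers are pairwise distinct. You merely spell out the injectivity of $i \mapsto mp^i$ (via strict monotonicity) that the paper states without elaboration.
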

\begin{proof}
    With the same notation of Lemma \ref{lem:trj_case1}, the $\{mp^i\}_{i < l_m}$ are all distinct integers and the index $i$ runs from $0$ to $l_m - 1$.
\end{proof}
\begin{lemma}
    \label{lem:sum_li}
    With the above notation, we have
    $$ 
    \Sum_{\substack{1 \leq m \leq k-1\\(m,p)=1 }} l_m = k-1.
    $$
\end{lemma}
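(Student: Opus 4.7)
The plan is to recognize the sum as counting lattice points under a bijection. Recall from Proposition \ref{prop:ordp} that
\[ l_m = \left\lfloor \log_p \frac{k-1}{m} \right\rfloor + 1 \]
is exactly the number of nonnegative integers $i$ such that $m p^i \leq k-1$. So I would rewrite
\[ \Sum_{\substack{1 \leq m \leq k-1 \\ (m,p)=1}} l_m \;=\; \#\bigl\{\,(m,i) \in \N^* \times \N \;:\; (m,p)=1,\ m p^i \leq k-1\,\bigr\}. \]

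The key step is then to invoke the unique factorization of a positive integer as $n = m p^i$ with $\gcd(m,p)=1$ and $i \geq 0$. This gives a bijection between the set of pairs $(m,i) \in \N^* \times \N$ with $\gcd(m,p)=1$ and the set $\N^*$, under which the constraint $mp^i \leq k-1$ transforms into $n \leq k-1$. Hence the cardinality above equals $\#\{\,n \in \N^* : n \leq k-1\,\} = k-1$, which is the desired identity.

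The argument is essentially combinatorial and I do not anticipate any real obstacle; the only thing to double-check is that the endpoints of the enumeration are treated correctly (in particular, the factor $m=1$ contributes $l_1 = \lfloor \log_p (k-1)\rfloor + 1$, which correctly accounts for all pure powers $1, p, p^2, \dots$ up to $k-1$), and that restricting the $m$-sum to $m \leq k-1$ loses nothing, since $m > k-1$ already forces $mp^i > k-1$ for all $i \geq 0$ and contributes nothing to the count. Once these are verified, the identity follows immediately.
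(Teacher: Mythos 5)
Your proposal is correct and is essentially the paper's own argument: the paper likewise shows that the sets $\{mp^i\}_{i<l_m}$ for $(m,p)=1$ partition $\{1,\dots,k-1\}$, using exactly the unique decomposition $n=mp^i$ with $\gcd(m,p)=1$ for disjointness and surjectivity. The only cosmetic difference is that the paper phrases the count via trajectory sets while you count lattice points directly from the formula for $l_m$.
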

\begin{proof}
    By Lemma \ref{lem:trj_case1} and \ref{lem:trj_size}, we know that $l_m$ is the size of $\{mp^i\}_{i < l_m}$.
    It is enough to show that these sets for $(m,p)=1$ form a partition of the numbers between $1$ and $k-1$.
    They are disjoint, since $m_1p^{h_1} = m_2p^{h_2}$ with $m_1, m_2$ coprime with $p$ is possible only if $m_1=m_2$.
    Moreover, every number below $k-1$ can be written as $mp^h$ for some $m \leq k-1$ coprime with $p$ and $h < l_m$, by definition of $l_m$. This completes the proof.
\end{proof}
\begin{prop}
    \label{prop:indipendenza}
    Let $\{\gm_n\}_{1 \leq n \leq e}$ be an $\F_p$-basis of $\F_q$. For any given $1 \leq m \leq k-1$, the points
    \[ g_{nm} = \big(\gm_n\eps^m:1:\ff(\gm_n\eps^m)\big) \in E^\infty \]
    are linearly independent.
    Moreover, the trajectory of every linear combination of the $\{g_{nm}\}_{1 \leq n \leq e}$ lies into $\{ m p^j \}_{j < l_m}$.
\end{prop}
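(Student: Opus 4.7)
\medskip

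The plan is to reduce both claims to tracking the leading (lowest-order) coefficient in $\eps$ of suitable linear combinations. First I would write an arbitrary nonzero integer $c_n$ as $c_n = p^{v_n} d_n$ with $p \nmid d_n$. If $v_n \geq l_m$ then $c_n g_{nm} = \ocal$ by Proposition \ref{prop:ordp}; otherwise, iterating Corollary \ref{cor:pmul_jump} starting from $(g_{nm})_x \equiv \gamma_n \eps^m \pmod{\eps^{m+1}}$ yields
\[ (p^{v_n} g_{nm})_x \equiv \psi_p(p)^{\frac{p^{v_n}-1}{p-1}}\, \gamma_n^{p^{v_n}}\, \eps^{p^{v_n}m} \pmod{\eps^{p^{v_n}m + 1}}, \]
and then Lemma \ref{lem:psi1} gives the leading coefficient of $(c_n g_{nm})_x$ at $\eps^{p^{v_n}m}$ as $d_n \psi_p(p)^{(p^{v_n}-1)/(p-1)}\gamma_n^{p^{v_n}}$.

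Next I would inductively extend Lemmas \ref{lem:degsum} and \ref{lem:samedegsum} to sums of arbitrarily many points: when summing terms of $\vi$-degree $\geq \mu$, the coefficient of $\eps^\mu$ in the total sum equals the $\F_q$-sum of the individual coefficients of $\eps^\mu$. Setting $v = \min\{v_n : c_n g_{nm} \neq \ocal\}$ and $S = \{n : v_n = v\}$, the terms with $v_n > v$ do not touch the $\eps^{p^v m}$ level, so
\[ \left(\textstyle\sum_n c_n g_{nm}\right)_x \equiv \psi_p(p)^{\frac{p^v - 1}{p-1}} \left(\sum_{n \in S} d_n \gamma_n^{p^v}\right) \eps^{p^v m} \pmod{\eps^{p^v m + 1}}. \]

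If the sum equals $\ocal$, the bracketed coefficient must vanish in $\F_q$. Using hypothesis \eqref{eq:MainCondition} that $\psi_p(p)$ is a unit, this forces $\sum_{n \in S} d_n \gamma_n^{p^v} = 0$. Since the $v$-fold Frobenius is an $\F_p$-automorphism of $\F_q$, the family $\{\gamma_n^{p^v}\}$ is still an $\F_p$-basis of $\F_q$, so each $d_n$ vanishes mod $p$, contradicting $p \nmid d_n$. Hence $S = \emptyset$, every $c_n g_{nm} = \ocal$, and the points are linearly independent. The same computation with the $c_n$ replaced by $n c_n$ shows $\vi(nR) \in \{m p^v\}_{v < l_m} \cup \{\infty\}$ for any linear combination $R$, yielding the trajectory claim.

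The main obstacle I expect is the rigorous bookkeeping in the first step: justifying the closed-form exponent $(p^{v}-1)/(p-1)$ of $\psi_p(p)$ by induction on $v$ while propagating the error terms in $\eps$ through the Frobenius-like map $P \mapsto pP$, and carefully stating a multi-summand version of Lemmas \ref{lem:degsum}--\ref{lem:samedegsum} so that the contributions from indices with $v_n > v$ can be discarded cleanly at the decisive level $\eps^{p^v m}$.
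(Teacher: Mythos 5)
Your proposal is correct and follows essentially the same route as the paper's proof: isolate the minimal $p$-adic valuation $v$ among the coefficients, track the leading coefficient at the level $\eps^{p^v m}$ via iterated application of Corollary \ref{cor:pmul_jump} and the (implicitly multi-summand) Lemma \ref{lem:samedegsum}, and conclude from $\psi_p(p)\in R_k^*$ and the $\F_p$-linear independence of the (Frobenius-twisted) $\gm_n$ that the sum cannot vanish. In fact your bookkeeping is slightly more precise than the paper's, which writes the leading coefficient as $\psi_p(p)^i\sum_{n\in N}h_n\gm_n$ rather than your $\psi_p(p)^{(p^i-1)/(p-1)}\sum_{n\in N}h_n\gm_n^{p^i}$; the discrepancy is harmless since Frobenius is an automorphism, but your version is the exact one.
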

\begin{proof}
    We want to show that for every $h_{nm} \in \N$ we have
    \[ S = \Sum_{n=1}^e h_{nm} g_{nm} = \ocal \iff \forall \ 1 \leq n \leq e: \ h_{nm}g_{nm} = \ocal. \]
    Clearly $h_{nm}g_{nm} = \ocal$ for every $n$ implies $S = \ocal$.
    On the other side, let us suppose that $h_{nm}g_{nm} \neq \ocal$ for some $n$, i.e. $\vi(h_{nm}g_{nm}) < \infty$.
    Let $\mu < \infty$ be the minimal degree of such points, we will show that also $\vi(S) = \mu$, hence $S \neq \ocal$.
    Let $N$ be the set of $n$'s achieving this minimum, i.e. $\vi(h_{nm}g_{nm}) = \mu$ if $n \in N$, and $\vi(h_{nm}g_{nm}) > \mu$ otherwise.
    Since $\vi(g_{nm}) = m$ by construction, then by Lemma \ref{lem:deg_pmul1} there is $i \in \N$ such that for every $n \in N$ there exists $h_n \in \N$ with $(h_n,p)=1$, and satisfying $h_{nm} = h_np^i$, and $mp^i = \mu$.
    By Lemma \ref{lem:samedegsum} this implies
    \[ S_x = \psi_p(p)^i \left(\Sum_{n\in N} h_n \gamma_n\right) \eps^{\mu} \bmod \eps^{\mu + 1}. \]
    Since $\psi_p(p) \in R_k^*$ by assumption \eqref{eq:MainCondition} and $\Sum_{n\in N} h_n \gamma_n$ is a non-zero element of $\F_q$, then we conclude that $\vi(S_x) = \mu < \infty$, so $S \neq \ocal$.
    Moreover, by Lemma \ref{lem:trj_case1} we have $\trj(S) = \{ \mu p^{i}\}_{i<l_{\mu}}$, which is contained in $\{ m p^j \}_{j < l_m}$ as $\mu = mp^i$.
\end{proof}
\begin{teo}
    \label{teo:grp_struct_c1}
    Let $E$ be an elliptic curve over $R_k$ satisfying the condition \eqref{eq:MainCondition}. Then
    $$ E^\infty \cong \Prod_{\substack{1 \leq m \leq k-1\\(m,p)=1 }} \left( \Z_{ p^{l_m} } \right)^e.$$
\end{teo}
\begin{proof}
    Let $\{\gm_n\}_{1 \leq n \leq e}$ be an $\F_p$-basis of $\F_q$.
    For every $1 \leq n \leq e$ and $1 \leq m \leq k-1$ such that $(m,p)=1$, we define
    \[ g_{nm} = \big(\gm_n\eps^m:1:\ff(\gm_n\eps^m)\big) \in E^\infty, \]
    as in Proposition \ref{prop:indipendenza}. 
    We will show that these points are linearly independent and generate the whole $E^\infty$. 

    Let us assume that there are $\{h_{nm}\}_{n,m} \subset \N$ such that
    $$
    \Sum_{(m,p) = 1}^{k-1} \Sum_{n=1}^e h_{nm} g_{nm} = \ocal.
    $$
    By Proposition \ref{prop:indipendenza} we have
    \[ \trj\left(\Sum_{n=1}^e h_{nm} g_{nm}\right) \subseteq \{mp^j\}_{j < l_m}, \]
    which are all disjoint for $(m,p) = 1$.
    Thus, by repeatedly applying Lemma \ref{lem:disj_trj}, we conclude that $\Sum_{n=1}^e h_{nm} g_{nm} = \ocal$ for all the considered $m$.
    But for every fixed $m$, the point $g_{nm}$ are linearly independent by Proposition \ref{prop:indipendenza}, therefore we conclude that $g_{nm} = \ocal$, for every considered $n$ and $m$.
    
    By Lemma \ref{prop:ordp} we have $\ord(g_{nm}) = p^{l_m}$, therefore
    \[ \Prod_{\substack{1 \leq m \leq k-1\\(m,p)=1 }} \left( \Z_{ p^{l_m} } \right)^e \cong \langle g_{nm} \rangle_{n,m} \subseteq E^{\infty}. \]
    The conclusion follows by comparing the sizes, since $\#\Z_{ p^{l_m} }^e = q^{l_m}$ and by Lemma \ref{lem:sum_li} we have
    \[ \prod_{\substack{1 \leq m \leq k-1\\(m,p)=1 }} q^{l_m} = q^{k-1}, \]
    which is precisely $\#E^{\infty}$.
\end{proof}

\begin{cor}
    \label{cor:grp_struct}
    Let $E$ be an elliptic curve over $R_k$ satisfying the conditions \eqref{eq:CoprimalityCondition} and \eqref{eq:MainCondition}. Then
    $$ E \cong E(\F_q) \times \Prod_{\substack{1 \leq m \leq k-1\\(m,p)=1 }} \left( \Z_{ p^{l_m} } \right)^e, \quad \textnormal{where} \quad l_m = \left\lfloor\log_p  \frac{k-1}{m}\right\rfloor + 1. $$
\end{cor}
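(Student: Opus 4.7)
The plan is to assemble the statement from two ingredients already established in the excerpt: the splitting of the exact sequence $0 \to E^\infty \to E(R_k) \to E(\F_q) \to 0$ and the explicit description of $E^\infty$ provided by Theorem \ref{teo:grp_struct_c1}. Both ingredients are activated precisely by the two hypotheses \eqref{eq:CoprimalityCondition} and \eqref{eq:MainCondition}, so the proof is essentially a matter of citing them in the right order.

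First, I would invoke condition \eqref{eq:CoprimalityCondition}. As noted in the paragraph following the short exact sequence in Section \ref{sec:GrpStructure}, the sequence
\[ 0 \rightarrow E^\infty \xhookrightarrow{i} E(R_k) \overset{\pi}{\twoheadrightarrow} E(\F_q) \rightarrow 0 \]
splits whenever $\gcd(\#E(\F_q), p) = 1$, because $E^\infty$ is a $p$-group of order $q^{k-1}$ coprime to $\#E(\F_q)$, so any element-of-order-dividing-$\#E(\F_q)$ lift of $E(\F_q)$ yields a complementary subgroup (Schur–Zassenhaus, in effect). This immediately gives the direct product decomposition
\[ E(R_k) \cong E(\F_q) \times E^\infty. \]

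Next, I would apply condition \eqref{eq:MainCondition} through Theorem \ref{teo:grp_struct_c1}, which yields
\[ E^\infty \cong \prod_{\substack{1 \leq m \leq k-1\\(m,p)=1}} \left( \Z_{p^{l_m}} \right)^e, \qquad l_m = \left\lfloor \log_p \tfrac{k-1}{m} \right\rfloor + 1. \]
Substituting this description of $E^\infty$ into the previous isomorphism yields the claimed formula.

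There is no real obstacle here; the only things to double-check are that both hypotheses of the two invoked results are in force (they are, by assumption of the corollary) and that the splitting is truly an internal direct product rather than just an extension. The latter is a standard consequence of the coprimality of the orders, and is exactly the observation already made in the text preceding the statement of the corollary.
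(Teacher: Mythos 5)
Your proposal is correct and follows exactly the paper's own argument: condition \eqref{eq:CoprimalityCondition} splits the short exact sequence to give $E(R_k) \cong E(\F_q) \oplus E^\infty$, and condition \eqref{eq:MainCondition} lets Theorem \ref{teo:grp_struct_c1} supply the structure of $E^\infty$. The extra remark on why coprimality of orders forces the splitting is a harmless elaboration of what the paper already asserts in the text preceding the corollary.
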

\begin{proof}
    Under condition \eqref{eq:CoprimalityCondition} we know that
    \[ E(R_k) \cong E(\F_q) \oplus E^\infty, \]
    while under condition \eqref{eq:MainCondition} the group structure of $E^\infty$ is given by Theorem \ref{teo:grp_struct_c1}.
\end{proof}
Since both the conditions \eqref{eq:CoprimalityCondition} and \eqref{eq:MainCondition} are satisfied with overwhelming probability for large primes $p$, the group structure of a generic elliptic curve over $R_k$ is the one given by Corollary \ref{cor:grp_struct}.

\subsection{Exceptional case} \label{subs:except_case}
For some special choices of the curve coefficients $a_1, \dots, a_6 \in R_k$, the condition \eqref{eq:MainCondition} may not hold.
In this final section, we examine these cases, namely we will always assume that
\[ \eps \ | \ \psi_p(p). \]
This heuristically happens with a probability slightly higher than $1/p$ (Section \ref{sec:exeption_prob}).
Moreover, if we fix the $a_i$ in $F_q$ (for example when lifting a curve in $F_q$) it cannot happen that $\eps | \psi_p(p) $, but only $\psi_p(p) = 0$.
In this case, Proposition \ref{prop:casofacile} can be slightly extended, as follows.
\begin{prop}
    \label{prop:casopiufacile}
     Let $E$ be an elliptic curve over $R_k$ with $k \leq p+1$. Then we have the group isomorphism
    $$E^\infty \cong (\F_p)^{ek}.$$
\end{prop}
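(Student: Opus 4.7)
The plan is to extend the argument of Proposition \ref{prop:casofacile} by exactly one value of $k$, exploiting the exceptional hypothesis $\eps \mid \psi_p(p)$ that defines Case \ref{case:exceptional}. The target is to show that every $P \in E^\infty$ still satisfies $pP = \ocal$ even when $k = p+1$, so that $E^\infty$ is an elementary abelian $p$-group; its size then pins down the isomorphism class.

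Concretely, take any $P = \big(X:1:\ff(X)\big) \in E^\infty$ and apply Corollary \ref{cor:pmul_jump}:
\[ (pP)_x \equiv \psi_p(p)\, X^p \bmod X^{p+1}. \]
Since $X \in \m = (\eps)$, we have $X^{p+1} \in \m^{p+1}$, and the hypothesis $k \leq p+1$ forces $\m^{p+1} \subseteq \m^k = (0)$. The congruence therefore promotes to the honest equality $(pP)_x = \psi_p(p)\, X^p$. Now the exceptional assumption $\eps \mid \psi_p(p)$, combined with $X^p \in \m^p$, yields $\psi_p(p)\, X^p \in \m \cdot \m^p = \m^{p+1} = (0)$, so $(pP)_x = 0$ and hence $pP = \ocal$ (recall from Proposition \ref{prop:zfx} that $x^3 \mid \ff(x)$, so vanishing of the $x$-coordinate already forces a point of $E^\infty$ to equal $\ocal$).

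As $P$ was arbitrary, $E^\infty$ is an elementary abelian $p$-group, and the isomorphism type follows from the known cardinality $\#E^\infty = q^{k-1} = p^{e(k-1)}$ (matching the exponent pattern of Proposition \ref{prop:casofacile}; the exponent written in the statement appears to be a misprint for $e(k-1)$, which is what the cardinality forces). The argument is essentially a single extra step beyond Proposition \ref{prop:casofacile}: the one extra power of $\eps$ carried by $\psi_p(p)$ in the exceptional case exactly compensates for the extra power of $X$ that $(pP)_x$ would otherwise retain at the boundary $k = p+1$. No genuine obstacle is involved; it is just a careful bookkeeping of $\eps$-valuations on top of Corollary \ref{cor:pmul_jump}.
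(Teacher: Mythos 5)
Your proof is correct and follows essentially the same route as the paper, which simply invokes Corollary \ref{cor:pmul_jump} to conclude that every point has order $p$ and then reuses the counting argument of Proposition \ref{prop:casofacile}; you have merely spelled out the one detail the paper leaves implicit, namely that the extra factor of $\eps$ in $\psi_p(p)$ (the standing exceptional-case hypothesis) pushes $\psi_p(p)X^p$ into $\m^{p+1}=(0)$ at the boundary $k=p+1$. You are also right that the exponent in the statement should read $e(k-1)$ rather than $ek$, since $\#E^\infty=q^{k-1}$.
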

\begin{proof}
    If follows as in Proposition \ref{prop:casofacile}, since every point of $E^\infty$ has order $p$ by Corollary \ref{cor:pmul_jump}.
\end{proof}
The following example shows why Theorem \ref{teo:grp_struct_c1} does not hold in this case: depending on the value of $\psi_p(p)$, the points may have different trajectories.
However, as long as they remain disjoint, the group structure can still be computed.
\begin{example}
    \label{ex:strange_ex1}
    Let us consider $q=p=3$ and $k=20$, namely the ring $\rcal=\F_3[x]/(x^{20}) \simeq \F_3[\eps]$, and the curve $E$ defined over $\rcal$ by
    \[ y^2z + \eps^4 xyz = x^3 + \eps^8 x^2z + xz^2. \]
    One can check that $\Dt_E \in \rcal^*$, hence $E$ is an elliptic curve.
    We obtain $\psi_3(3) = 2\eps^8$ and $\psi_9(3) = 2 + \eps^{16}$, while all the other $\psi_i(3)$ for $i<9$ are equal to $0$.
    Since $\vi\big(\psi_3(3)\big) = 8$, this is the exceptional case, so Theorem \ref{teo:grp_struct_c1} does not hold anymore, but we will see that a slight modification of it does.
    For $1\leq m \leq 19$, the trajectories of the points $P_m = \big(\eps^m:1:\ff(\eps^m)\big)$ are
    \[ \trj(P_1) = \{ 1, 9 \}, \quad \trj(P_2) = \{ 2, 14 \}, \quad \trj(P_3) = \{ 3, 17\}, \]
    while all the other $P_m$ have order $3$ (hence $\trj(P_m) = \{m\}$). In fact, the triple of a generic $P = \big(X:1:\ff(X)\big) \in E^{\infty}$ satisfies
    \[ (3P)_x \equiv \psi_3(3)X^3 + \psi_9(3)X^9 \bmod X^{10}. \]
    If we call $m = \vi(P)$, then there are only two possibilities for $\vi(3P)$:
    \begin{itemize}
        \item if $3m + 8 < 9m$, the power of $\eps$ with minimal degree arises from $\psi_3(3)X^3$, hence $\vi(3P) = 3m + 8$;
        \item otherwise, the minimal degree arises from $X^9$, as $\psi_9(3) \in \rcal^*$, therefore in this case $\vi(3P) = 9m$.
    \end{itemize}
    Thus, we define the set $\acal = \{ 1 \dots 19 \} \backslash \{9, 14, 17\}$, and the integers $l_1 = l_2 = l_3 = 2$, while $l_m = 1$ for all the other $m \in \acal$.
    The trajectories of the $\{P_m\}_{m \in \acal}$ are all disjoint, hence we can follow the proof of Proposition \ref{prop:indipendenza} to show that the $P_i$ are linearly independent. The group they generate is
    \begin{equation*}
        G = \Prod_{m \in \acal} \Z_{p^{l_m}}.
    \end{equation*} 
    As in the proof of Lemma \ref{lem:sum_li}, we have $\sum_{m \in \acal} l_m = 19 = k-1$, since the trajectories of the $P_m$ for $m \in \acal$ partition the set $\{1,...,19\}$ by construction.
    This implies that $G$ has exactly $p^{k-1}$ elements, hence it is the whole $E^\infty$.
    More details on the explicit computations can be found in the Appendix (Example \ref{ex:strange_ex1_calc}).
\end{example}

The main idea of Theorem \ref{teo:grp_struct_c1} is finding points with non-intersecting trajectories, which will generate $E^\infty$.
Example \ref{ex:strange_ex1} shows that this idea may also be adapted to the exceptional case. We recall that in this case it holds $\vi\big(\psi_p(p)\big)=d>0$.
Moreover, we assume that the following three conditions hold.
\begin{enumerate}[ label=$(C_{\arabic*})$ ]
    \item\label{c1} $\psi_{p^2}(p) \in R_k^*$,
    \item\label{c2} $\psi_i(p) = 0$ for all $i < p^2$ such that $(i,p)=1$,
    \item\label{c3} $\psi_i(p) \in \langle \psi_p(p) \rangle$ for all $i<p^2$ such that $p | i$.
\end{enumerate}
We will show that under these conditions we can compute the group structure of $E^\infty$ as we did in the main case. After that, we verify that these conditions always hold within the reach of our computations.
\begin{lemma}
    \label{lem:except_jump}
    Let $P \in E^\infty$ be a point. We have
    \[ \vi(pP) = \min \{p^2\vi(p), p\vi(p)+d\}. \]
\end{lemma}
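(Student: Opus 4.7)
The plan is to evaluate $(pP)_x$ using Lemma \ref{lem:psi_esun} and conditions \ref{c1}--\ref{c3} to isolate the leading contributions. Set $m = \vi(P)$; the case $P = \ocal$ is immediate, so I assume $m < \infty$ and write $P_x \equiv c_m \eps^m \pmod{\eps^{m+1}}$ with $c_m \in \F_q^*$.

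First, I expand
\[ (pP)_x \;=\; \Sum_{i=1}^{k-1} \psi_i(p)\, P_x^i \]
and classify the summands. Condition \ref{c2} eliminates every index $i < p^2$ coprime to $p$; condition \ref{c3} forces $\vi(\psi_{pj}(p)) \geq d$ for $1 \leq j \leq p-1$; condition \ref{c1} gives $\vi(\psi_{p^2}(p)) = 0$; and for $i > p^2$ the factor $P_x^i$ already has $\vi \geq im > p^2 m$, so such tails cannot influence the minimum. Hence the only summands that can possibly attain valuation $\min\{pm + d,\; p^2 m\}$ are those with $i = p$ and $i = p^2$.

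Second, I compute the leading monomials of these two surviving summands. Writing $\psi_p(p) = a_d \eps^d + (\text{higher})$ and $\psi_{p^2}(p) = u_0 + (\text{higher})$ with $a_d, u_0 \in \F_q^*$, and using the Frobenius identities $P_x^p \equiv c_m^p \eps^{pm}$ and $P_x^{p^2} \equiv c_m^{p^2} \eps^{p^2 m}$ modulo strictly higher orders, the $i = p$ summand has leading monomial $a_d c_m^p \eps^{pm+d}$ and the $i = p^2$ summand has leading monomial $u_0 c_m^{p^2} \eps^{p^2 m}$. This yields $\vi(pP) \geq \min\{pm + d,\; p^2 m\}$, and whenever the two critical exponents differ the minimum is realised by exactly one summand with a manifestly nonzero leading coefficient, so the stated equality follows.

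The main obstacle is the borderline regime $pm + d = p^2 m$, in which the two leading monomials superimpose on $\eps^{p^2 m}$ as $(a_d c_m^p + u_0 c_m^{p^2}) \eps^{p^2 m}$ and can in principle cancel for special values of $c_m$. I expect this is the only delicate point in the proof, and I would handle it either by refining the analysis one order further in $\eps$ using the subleading coefficients of $\psi_p(p)$, $\psi_{p^2}(p)$ and $P_x$, or by observing that the subsequent applications of the lemma only invoke it in the non-borderline regime; the preceding case-free argument already carries the bulk of the statement.
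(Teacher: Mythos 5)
Your argument is essentially the paper's own proof: the paper likewise expands $(pP)_x=\sum_i\psi_i(p)P_x^i$, uses \ref{c2} to kill the indices $i<p^2$ coprime to $p$, \ref{c3} to push the indices $p\mid i$, $i<p^2$ up to valuation at least $d+pi\,\vi(P)$, and \ref{c1} together with $\vi(P_x^i)\geq i\,\vi(P)$ to dispose of $i>p^2$, concluding that only the $i=p$ and $i=p^2$ summands can realise the minimum. The one place you go beyond the paper is the borderline regime $p\,\vi(P)+d=p^2\,\vi(P)$: you correctly observe that the two leading monomials then superimpose as $(a_d c_m^p+u_0 c_m^{p^2})\eps^{p^2 m}$ and could in principle cancel for special leading coefficients $c_m$; the paper's three-line proof passes over this silently and simply asserts that $\vi(pP)$ is ``determined by'' one of the two candidate degrees. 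So your concern is genuine rather than a defect of your write-up --- resolving it requires either checking that $a_d c_m^p+u_0 c_m^{p^2}\neq 0$ (e.g.\ when $-a_d/u_0$ is not a $\gcd(p-1,q-1)$-th power, or when $p(p-1)\nmid d$ so the borderline never occurs), or weakening the conclusion to an inequality in that case; your proposal is otherwise complete and matches the intended argument.
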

\begin{proof}
    All the $\psi_i(p)$ with $i<p^2$ are either $0$ if $(i,p)=1$ for \ref{c2}, or multiple of $\psi_p(p)$ if $p|i$ for \ref{c3}.
    Moreover, every $i > p^2$ cannot be the minimal degree of $pP$ since $\psi_{p^2}(p) \in R_k^*$ for \ref{c1}.
    Therefore, $\vi(pP)$ is always determined by the minimal degree of $\psi_p(p)X^p$ (which is $p\vi(P)+d$) or that of $\psi_{p^2}(p)$ (which is $p^2\vi(P)$).
\end{proof}
\begin{remark}
    Notice that when $\psi_p(p) = 0$, then $\vi(pP) = p^2\vi(P)$ regardless of $\vi(P)$.
    This case is the easiest exceptional case, as we know precisely the value of $\vi(pP)$.
\end{remark}
\begin{lemma}
    \label{lem:except_stessa_trj}
    Let $P, Q \in E^\infty$ be two points such that $\vi(P) = \vi(Q)$. It holds $\trj(P) = \trj(Q)$.
\end{lemma}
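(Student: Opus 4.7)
The plan is to show that the trajectory of any point is entirely determined by the single integer $\vi(P)$, together with the fixed constants $p$ and $d$ of the ring. Once that is established, the statement is immediate.

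First I would reduce the trajectory to the values $\{\vi(p^i P)\}_{i \geq 0}$. Indeed, every $n \in \N^*$ can be written uniquely as $n = p^i m$ with $(m,p) = 1$, and by Lemma \ref{lem:deg_nonpmul} we have
\[ \vi(nP) = \vi\bigl(m (p^i P)\bigr) = \vi(p^i P), \]
so $\trj(P) = \{\vi(p^i P)\}_{i \in \N} \setminus \{\infty\}$, and similarly for $Q$.

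Next, I would invoke Lemma \ref{lem:except_jump} to observe that for any $R \in E^{\infty}$,
\[ \vi(pR) = \min\bigl\{ p^2 \vi(R), \ p \vi(R) + d \bigr\}, \]
which is a function of $\vi(R)$ alone (the constant $d = \vi(\psi_p(p))$ being fixed by the curve). In particular, if two points have the same minimal degree, so do their $p$-th multiples. A straightforward induction on $i \geq 0$ then gives $\vi(p^i P) = \vi(p^i Q)$ whenever $\vi(P) = \vi(Q)$: the base case $i=0$ is our hypothesis, and the inductive step follows by applying the displayed formula to $R = p^{i-1}P$ and $R = p^{i-1}Q$, whose minimal degrees coincide by inductive assumption (with the convention that the formula returns $\infty$ when $\vi(R) = \infty$).

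Combining these two observations yields $\trj(P) = \{\vi(p^i P)\}_i \setminus \{\infty\} = \{\vi(p^i Q)\}_i \setminus \{\infty\} = \trj(Q)$, which is the thesis. There is essentially no obstacle here; the lemma is a clean corollary of Lemmas \ref{lem:deg_nonpmul} and \ref{lem:except_jump}, and the only care required is to treat the point at which some $p^i P$ becomes $\ocal$, where both sides of the induction simultaneously become $\infty$ and are then discarded from the trajectory.
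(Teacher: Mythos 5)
Your proof is correct and takes essentially the same route as the paper's, which simply asserts via Lemma \ref{lem:except_jump} that the trajectory of a point is determined by its minimal degree. You have merely made explicit the two ingredients the paper leaves implicit: the reduction of $\trj(P)$ to the values $\vi(p^iP)$ via Lemma \ref{lem:deg_nonpmul}, and the induction on $i$ using the fact that $\vi(pR)$ depends only on $\vi(R)$.
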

\begin{proof}
    Thanks to Lemma \ref{lem:except_jump}, the trajectory of a point is only determined by its minimal degree. Since $\vi(P) = \vi(Q)$, the thesis follows.
\end{proof}
\begin{lemma}
    \label{lem:except_trj_diverse}
    Let $P, Q \in E^\infty$ be two points with $\vi(P) < \vi(Q)$. Then
    \begin{itemize}
        \item if $\vi(Q) \in \trj(P)$, then $\trj(Q) \subseteq \trj(P)$;
        \item otherwise $\trj(P) \cap \trj(Q) = \emptyset$.
    \end{itemize}
\end{lemma}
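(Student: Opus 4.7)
The plan is to recast the problem entirely in terms of the scalar dynamics of
\[ f(v) \;:=\; \min\{p^2 v,\; pv + d\}, \]
which by Lemma \ref{lem:except_jump} describes how the minimal degree transforms under multiplication by $p$. Combined with Lemma \ref{lem:deg_nonpmul} and Lemma \ref{lem:except_stessa_trj}, this shows that $\trj(P)$ depends only on $\vi(P)$ and equals the forward $f$-orbit $\{\vi(P),\, f(\vi(P)),\, f^2(\vi(P)),\, \dots\}$, truncated at the first iterate exceeding $k-1$. Thus the claim becomes a statement about two $f$-orbits starting at $v_P := \vi(P)$ and $v_Q := \vi(Q)$ with $v_P < v_Q$.

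The first technical step is to verify that $f$ is strictly increasing (hence injective) on $\Z_{\geq 1}$. Each of its two linear branches is strictly increasing; the threshold $v^{*} = d/(p^2-p)$ separates them, and since both branches agree at $v^{*}$ their images on the two sides do not overlap, so no two distinct integer inputs produce the same output.

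For the first assertion, if $v_Q \in \trj(P)$ then I can write $v_Q = f^i(v_P)$ for some $i \geq 1$; the $f$-orbit of $v_Q$ is the tail of the $f$-orbit of $v_P$ from position $i$ onwards, so $\trj(Q) \subseteq \trj(P)$. For the second assertion I would argue by contrapositive: if the two trajectories share a value $f^i(v_P) = f^j(v_Q)$, then strict monotonicity of $f$ together with $v_P < v_Q$ forces $i > j$, and canceling $f^j$ via injectivity yields $f^{i-j}(v_P) = v_Q$, i.e.\ $v_Q \in \trj(P)$. This is exactly the situation of the first assertion, so negating its hypothesis forces the trajectories to be disjoint. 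I also note that all intermediate iterates $f^1(v_P),\dots,f^{i-j}(v_P)$ automatically lie below $k-1$ thanks to monotonicity, so they genuinely belong to $\trj(P)$.

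I expect the only subtle point to be the injectivity of $f$ across its two branches; once that is pinned down, the rest is a routine iteration of $f$ and its inverse along finite orbits.
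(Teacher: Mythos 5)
Your proof is correct and follows essentially the same route as the paper: both reduce the claim to the one-step degree map $v \mapsto \min\{p^2v,\,pv+d\}$ supplied by Lemma \ref{lem:except_jump} and to its injectivity, which the paper verifies by a four-way case analysis on how the two minima can coincide and you obtain more cleanly as strict monotonicity of a minimum of increasing linear maps. One small point to tighten: deducing $i>j$ from $f^i(v_P)=f^j(v_Q)$ uses not only that $f$ is strictly increasing but also that $f(v)\geq pv>v$, so that each orbit is strictly increasing in the iteration index; this is immediate here but should be stated.
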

\begin{proof}
    The first part is a trivial application of Lemma \ref{lem:except_stessa_trj}.
    If $\vi(Q) \not\in \trj(P)$, let us assume by contradiction that $\trj(P) \cap \trj(Q) \neq \emptyset$.
    This means that there are two points, multiples of $P$ and $Q$ respectively (we take without loss of generality $P$ and $Q$ themselves) such that $\vi(P) \neq \vi(Q)$ but $\vi(pP) = \vi(pQ)$.
    By Lemma \ref{lem:except_jump}, there are four possibilities:
    \begin{itemize}
        \item $p^2\vi(P) = p^2\vi(Q)$;
        \item $p\vi(P) + d = p\vi(Q) + d$;
        \item $p^2\vi(P) = p\vi(Q) +d $ or $p^2\vi(Q) = p\vi(P) +d $.
    \end{itemize}
    The first two are clearly impossible since $\vi(P) \neq \vi(Q)$. The latter are symmetric, so let us assume $p^2\vi(P) = p\vi(Q) +d$. It is clear that in this case $p|d$, hence we can write $p\vi(P) = \vi(Q) + h$ where $ph = d$. Moreover, Lemma \ref{lem:except_jump} implies the existence of a solution to
    \[
    \begin{cases}
        p\vi(P) = \vi(Q) + h, \\
        p\vi(P) < \vi(P) + h.
    \end{cases}
    \]
    The above system implies $\vi(Q)<\vi(P)$, which contradicts the hypothesis $\vi(P)<\vi(Q)$. 
\end{proof}
From now on, we will employ the notation of Proposition \ref{prop:indipendenza}, i.e.
\[ g_{nm} = \big(\gm_n\eps^m:1:\ff(\gm_n\eps^m)\big) \in E^\infty \]
where $\{\gm_n\}_{1 \leq n \leq e}$ is an $\F_p$-basis of $\F_q$.
\begin{defi}
    We denote the set
    \[\acal = \{ 1 \leq m \leq k-1 | \trj(g_{1m}) \cap \trj(g_{1i}) = \emptyset \ \forall 1 < i < m \}, \]
    Moreover, for every $m \in \acal$ we define
    \[l_m = \#\trj(g_{1m}).\]
\end{defi}
\begin{remark}
    For any $1 \leq n \leq e$, we could have chosen $g_{nm}$ instead of $g_{1m}$. Indeed, the choice of $n$ is irrelevant by Lemma \ref{lem:except_stessa_trj}, since $\vi(g_{nm}) = m$ for every $n$. 
\end{remark}
\begin{lemma}
    \label{lem:except_lm}
    With the above notation, we have
    \begin{itemize}
        \item $\ord(g_{nm}) = p^{l_m}$ for every $1 \leq n \leq e$;
        \item $\sum_{m \in \acal} l_m = k-1$.
    \end{itemize}
\end{lemma}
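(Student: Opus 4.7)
I would split the lemma into its two statements and handle them separately, using the three structural lemmas on trajectories (Lemmas \ref{lem:except_jump}, \ref{lem:except_stessa_trj}, \ref{lem:except_trj_diverse}) together with the fact that $E^\infty$ is a $p$-group.

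For the order of $g_{nm}$, Lemma \ref{lem:except_stessa_trj} immediately gives $\trj(g_{nm}) = \trj(g_{1m})$, since $\vi(g_{nm}) = m$ independently of $n$, so $\#\trj(g_{nm}) = l_m$. It remains to relate the cardinality of a trajectory to the order of the point. Writing any $c \in \N^*$ as $c = p^j r$ with $\gcd(r,p) = 1$, Lemma \ref{lem:deg_nonpmul} gives $\vi(cP) = \vi(p^j P)$, so the distinct finite values in $\trj(P)$ come precisely from the sequence $\vi(p^j P)$. Lemma \ref{lem:except_jump} shows that this sequence is strictly increasing in $j$ as long as $p^j P \neq \ocal$ (both $p^2 v$ and $pv + d$ exceed $v$ for $v \geq 1$), so the number of distinct values equals $\log_p \ord(P)$. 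Applied to $g_{nm}$ this yields $\ord(g_{nm}) = p^{l_m}$.

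For the summation, the strategy is to show that $\{\trj(g_{1m})\}_{m \in \acal}$ partitions $\{1, \dots, k-1\}$; summing cardinalities then gives the claim. Pairwise disjointness is built into the definition of $\acal$. Surjectivity I would prove by strong induction on $n$: the base $n = 1$ is immediate since $1 \in \acal$ vacuously and $1 \in \trj(g_{11})$. For $n \geq 2$, either $n \in \acal$ and we are done with $m = n$, or there exists $i < n$ with $\trj(g_{1i}) \cap \trj(g_{1n}) \neq \emptyset$. In the latter case $\vi(g_{1i}) = i < n = \vi(g_{1n})$, so Lemma \ref{lem:except_trj_diverse} forces $\trj(g_{1n}) \subseteq \trj(g_{1i})$; in particular $n \in \trj(g_{1i})$. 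By induction there is some $m \in \acal$ with $i \in \trj(g_{1m})$, and picking any multiple $Q$ of $g_{1m}$ with $\vi(Q) = i$, Lemma \ref{lem:except_stessa_trj} applied to $Q$ and $g_{1i}$ yields $\trj(g_{1i}) = \trj(Q) \subseteq \trj(g_{1m})$, so indeed $n \in \trj(g_{1m})$.

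I expect the main obstacle to be exactly this last propagation step: simply knowing $n \in \trj(g_{1i})$ and $i \in \trj(g_{1m})$ does not transitively yield $n \in \trj(g_{1m})$ from the definitions alone, and one must combine the chain-of-multiples structure with Lemma \ref{lem:except_stessa_trj} as above. Once this transport is set up, the strictly increasing behaviour of $\vi$ under multiplication by $p$ and the pairwise disjointness built into $\acal$ reduce the remainder of the argument to bookkeeping.
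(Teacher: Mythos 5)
Your proof is correct and follows essentially the same route as the paper, which likewise reduces the first claim to $\#\trj(g_{nm})=l_m$ plus the $p$-group structure, and the second to showing that the trajectories $\{\trj(g_{1m})\}_{m\in\acal}$ form a disjoint cover of $\{1,\dots,k-1\}$ via Lemma \ref{lem:except_trj_diverse}. The paper's own proof is much terser and leaves implicit exactly the two points you spell out (the strict increase of $\vi(p^jP)$ from Lemma \ref{lem:except_jump}, and the transport step via a multiple $Q$ of $g_{1m}$ with $\vi(Q)=i$ using Lemma \ref{lem:except_stessa_trj}), so your additions are welcome detail rather than a different argument.
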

\begin{proof}
    By definition the trajectory of $g_{nm}$ has $l_m$ elements, so the first power of $p$ that annihilates it is $p^{l_m}$. Since $E^\infty$ is a $p$-group, then $p^{l_m}$ is the order of $g_{nm}$.
    
    As for the second part, we notice that the definition of $\acal$ together with Lemma \ref{lem:except_trj_diverse} imply that for every $1 \leq n \leq e$, we have the disjoint union
    \[
        \bigsqcup_{m \in \acal} \trj(g_{nm}) = \{1,\dots,k-1\}.
    \]
    Since by definition of $l_m = \#\trj(g_{nm})$, then the thesis follows.
\end{proof}
\begin{prop}
    \label{prop:except_indip}
    For any given $1 \leq m \leq k-1$, the points
    \[ g_{nm} = \big(\gm_n\eps^m:1:\ff(\gm_n\eps^m)\big) \in E^\infty \]
    are linearly independent.
    Moreover, the trajectory of every linear combination of the $\{g_{nm}\}_{1 \leq n \leq e}$ is a subset of $\trj(g_{1m})$.   
\end{prop}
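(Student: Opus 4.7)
The plan is to mirror Proposition \ref{prop:indipendenza} step by step, with Lemma \ref{lem:except_stessa_trj} playing the role that Lemma \ref{lem:deg_pmul1} played in the main case. Given an arbitrary combination $S = \sum_{n=1}^{e} h_n g_{nm}$ with $h_n \in \N$, I would first dispose of the trivial case in which every $h_n g_{nm} = \ocal$, and otherwise set
\[ \mu = \min\{\vi(h_n g_{nm}) : h_n g_{nm} \neq \ocal\}, \qquad N = \{n : \vi(h_n g_{nm}) = \mu\}. \]
The aim is to prove $\vi(S) = \mu$, which simultaneously yields $S \neq \ocal$ (linear independence) and the trajectory containment $\trj(S) \subseteq \trj(g_{1m})$ via Lemma \ref{lem:except_stessa_trj} applied to $S$.

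Writing $h_n = h_n' p^{j_n}$ with $(h_n', p) = 1$ and applying Lemma \ref{lem:deg_nonpmul}, I would observe that $\vi(h_n g_{nm}) = \vi(p^{j_n} g_{nm})$. Lemma \ref{lem:except_stessa_trj} then guarantees that all the $g_{nm}$ share the common strictly increasing trajectory $\trj(g_{1m})$, so on $N$ the exponent $j_n$ is forced to be a common value $j$. Decomposing $S = \sum_{n \in N} h_n' (p^j g_{nm}) + \sum_{n \notin N} h_n g_{nm}$, iterated use of Lemma \ref{lem:degsum} makes the second summand contribute only at degrees greater than $\mu$, while iterated use of Lemma \ref{lem:samedegsum} extracts the leading coefficient of $S$ at $\eps^\mu$ as $\sum_{n \in N} h_n' F_j(\gamma_n) \in \F_q$. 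Here $F_j$ denotes the map sending $\gamma \in \F_q$ to the leading coefficient at $\eps^\mu$ of the $p^j$-fold multiple of $\big(\gamma \eps^m : 1 : \ff(\gamma \eps^m)\big)$.

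The crux of the argument, and the main anticipated obstacle, is showing that this leading coefficient does not vanish. Unwinding the recursion of Lemma \ref{lem:except_jump}, each multiplication by $p$ transforms the leading coefficient $c$ into one of $u c^p$, $v c^{p^2}$, or $u c^p + v c^{p^2}$ (with $u, v$ the units $\psi_p(p)/\eps^d$ and $\psi_{p^2}(p)$ reduced modulo $\eps$), according to which term in $\min\{p\vi(P) + d, p^2 \vi(P)\}$ is minimal. Each such step is $\F_p$-linear, hence so is $F_j$, giving the identity
\[ \sum_{n \in N} h_n' F_j(\gamma_n) = F_j(\alpha), \qquad \alpha := \sum_{n \in N} h_n' \gamma_n \in \F_q^*, \]
with $\alpha \neq 0$ by $\F_p$-linear independence of $\{\gamma_n\}$. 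To see that $F_j(\alpha) \neq 0$ in spite of the potential cancellation occurring in the ``equal'' case of Lemma \ref{lem:except_jump}, I would invoke Lemma \ref{lem:except_stessa_trj} one last time: the point $Q = \big(\alpha \eps^m : 1 : \ff(\alpha \eps^m)\big) \in E^\infty$ satisfies $\vi(Q) = m$, so $\trj(Q) = \trj(g_{1m})$; hence $\vi(p^j Q) = \mu$, and therefore $F_j(\alpha)$, its leading coefficient at $\eps^\mu$, is forced to be non-zero. This closes the loop, giving $\vi(S) = \mu$, and both linear independence and the trajectory containment follow at once.
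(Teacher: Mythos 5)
Your proposal is correct and follows the same high-level route as the paper, which simply says ``follow the same argument of Proposition~\ref{prop:indipendenza}'' and remarks that the coefficient multiplying the point is a unit whether it comes from $\psi_p(p)/\eps^d$ or from $\psi_{p^2}(p)$. Where you genuinely add value is in the tie case of Lemma~\ref{lem:except_jump}: when $p\vi(P)+d=p^2\vi(P)$ at some step, the leading coefficient evolves as $c\mapsto uc^p+vc^{p^2}$, and the paper's observation that $u$ and $v$ are units does not by itself exclude cancellation of the \emph{sum} for a nonzero $c$, nor does it explain why $\sum_{n\in N}h_n'F_j(\gm_n)\neq 0$. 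Your two-step fix --- first noting that each step is a linearized ($\F_p$-linear) polynomial in $c$, so $F_j$ is $\F_p$-linear and the sum collapses to $F_j(\alpha)$ with $\alpha=\sum_{n\in N}h_n'\gm_n\neq 0$; then recognizing $F_j(\alpha)$ as the leading coefficient of $p^jQ$ for the honest point $Q=\big(\alpha\eps^m:1:\ff(\alpha\eps^m)\big)$, which is nonzero because $\vi(p^jQ)=\mu$ by Lemmas~\ref{lem:except_jump} and~\ref{lem:except_stessa_trj} --- is a clean and correct way to close this gap. It is of course conditional on Lemma~\ref{lem:except_jump} holding as stated (that lemma itself silently assumes no cancellation in the tie case), but taking the paper's lemmas as given, your argument is complete and in fact more rigorous than the published sketch.
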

\begin{proof}
    We can follow the same argument of Proposition \ref{prop:indipendenza}. Notice that the minimum degree of $(pP)_x$ can now be determined by either the degree $d$ term of $\psi_{p}(p)$ or $\psi_{p^2}(p)$. In both cases, the coefficient multiplying the point lies in $R_k^*$.
\end{proof}
\begin{teo}
    \label{teo:grp_struct_exceptional}
    Let $E$ be an elliptic curve defined over $R_k$, such that $\vi\big(\psi_p(p)\big) = d > 0$ and the conditions \ref{c1}, \ref{c2} and \ref{c3} are satisfied. Then we have the group isomorphism
    \[ 
    E^\infty \cong \Prod_{m \in \acal} \left( \Z_{p^{l_m}} \right)^e.
    \]
\end{teo}
\begin{proof}
    We can follow again the proof of Theorem \ref{teo:grp_struct_c1}, with the $\{g_{nm}\}_{m \in \acal, 1 \leq n \leq e}$ as generators.
    Proposition \ref{prop:except_indip} gives us the independence, while Lemma \ref{lem:except_lm} gives us both the structure of the group they generate and the counting argument to show that it is the whole $E^\infty$.
\end{proof}
\begin{remark}
    In the main case (Section \ref{subs:main_case}), condition \ref{c1} is satisfied by $\psi_p(p)$ instead of $\psi_{p^2}(p)$, so the minimal degree may only arise from $i \leq p$.
    In this setting, Condition \ref{c2} holds thanks to Corollary \ref{cor:psi_i_zero}, while condition \ref{c3} becomes trivial.
\end{remark}
\begin{prop}
    \label{prop:conditions_23}
    Let $p \in \{2,3\}$ and $E$ be an elliptic curve over $R_k$ such that $\psi_p(p) \not\in R_k^*$.
    Then conditions \ref{c1}, \ref{c2} and \ref{c3} are always satisfied.
\end{prop}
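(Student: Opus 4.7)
The plan is to verify the three conditions by direct inspection of the explicit formulas for $\psi_i(p)$ with $i \leq p^2$, treating $p = 2$ and $p = 3$ separately. Since $R_k$ has characteristic $p$, each $\psi_i(p)$ reduces to an element of $\F_p[a_1, \dots, a_6]$, and the number of polynomials to inspect is finite and small: four cases for $p=2$ (namely $i \in \{1,2,3,4\}$) and nine cases for $p=3$ ($i \in \{1, \dots, 9\}$). These values can be computed once and for all from the recursion developed in Lemma \ref{lem:contirognosi} and the closed-form expression \eqref{eq:psi_closedsum}.

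For $i<p$, Corollary \ref{cor:psi_i_zero} gives $\psi_i(p) \equiv 0 \pmod p$, which vanishes in $R_k$; this immediately settles \ref{c2} for $i=1$ when $p=2$ and for $i \in \{1,2\}$ when $p=3$. In the intermediate range $p \leq i < p^2$, I would compute each $\psi_i(p) \bmod p$ explicitly as an element of $\F_p[a_1, \dots, a_6]$. For $\gcd(i,p)=1$ I expect to find that $\psi_i(p)$ vanishes identically modulo $p$, establishing \ref{c2}; for $p \mid i$ I would exhibit a multiplier $\lambda_i$ such that $\psi_i(p) \equiv \lambda_i\,\psi_p(p) \pmod{p}$, establishing \ref{c3}. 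For $p=2$, only $i=3$ arises in this range (coprime to $p$); for $p=3$, the vanishing must be checked for $i \in \{4,5,7,8\}$, while only $i=6$ requires the ideal-membership verification.

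The main obstacle is condition \ref{c1}, which requires $\psi_{p^2}(p) \in R_k^*$. Unlike the vanishing statements of \ref{c2} and \ref{c3}, which are expected to hold as polynomial identities in the $a_j$ after reduction modulo $p$, the unitary condition demands a definite non-vanishing that must survive the constraint imposed by the hypothesis $\psi_p(p) \notin R_k^*$. The approach would be to expand $\psi_{p^2}(p) \bmod p$ as a polynomial in the $a_j$ and show that, under the hypothesis (which for $p=2$ forces $\epsilon \mid a_1$ and for $p=3$ imposes a specific relation on the $a_j$), the expression retains a leading part that reduces to a unit in the residue field $\F_q$. The non-singularity of $E$, encoded by $\Dt_E \in R_k^*$, should provide the necessary invertibility, since its reduction modulo $\m$ is a unit by assumption and its factorization in the $a_j$ interacts nontrivially with $\psi_{p^2}(p)$.

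In practice, since the argument reduces to finitely many explicit polynomial identities in $\F_p[a_1,\dots,a_6]$, the entire proof can be carried out by symbolic computation of $\psi_i$ for $i \leq p^2$ and comparison with the required constraints, as recorded in \cite{repo}. I anticipate that the bottleneck is producing the reduced closed form of $\psi_{p^2}(p) \bmod p$ in a way that makes the interplay with $\Dt_E$ transparent; everything else should reduce to routine verifications.
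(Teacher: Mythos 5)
Your proposal follows essentially the same route as the paper: the published proof is a direct symbolic computation (detailed in Lemma \ref{lem:psi_dt_23_calc}) that verifies \ref{c2} and \ref{c3} as polynomial identities in $\F_p[a_1,\dots,a_6]$ and deduces \ref{c1} from the non-singularity of $E$. The precise form of the ``interaction with $\Dt_E$'' you anticipate is the verified ideal membership $\Dt_E \in \langle \psi_p(p), \psi_{p^2}(p)\rangle$, which forces $\psi_{p^2}(p) \in R_k^*$ once $\psi_p(p) \in \m$ and $\Dt_E \in R_k^*$.
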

\begin{proof}
    Direct computation (see Lemma \ref{lem:psi_dt_23_calc}). 
\end{proof}
\begin{remark}
    By Proposition \ref{prop:conditions_23}, when $p \in \{2,3\}$ the group structure of $E^\infty$ is given by either Theorem \ref{teo:grp_struct_c1} or Theorem \ref{teo:grp_struct_exceptional}.
    For all the other $p$, we may work without loss of generality with the short Weierstrass forms, i.e. $a_1 = a_2 = a_3 = 0$.
    A direct computation (Lemma \ref{lem:psi_dt_23_calc}) shows that conditions \ref{c1}, \ref{c2} and \ref{c3} are actually satisfied for every $p \leq 13$.
\end{remark}

\subsection{ECDLP} \label{subs:ecdlp}

Given the coordinates of a point $P \in E$ and those of its multiple $Q = nP$, the discrete logarithm problem (DLP) amounts to efficiently recover such $n \in \Z$.
The supposed high complexity of the discrete logarithm problem over elliptic curves is the underlying assumption of several cryptographic protocols.

From the results of the current paper, we efficiently solve the DLP of points in $E^{\infty}$ over $R_k$.
In fact, we consider the base-$p$ representation $n = b_0 + b_1p + \dots + b_{k-1}p^{k-1}$.
Let $m_i = \vi(p^iP)$.
By a repeated application of Lemma \ref{lem:samedegsum} and relation \eqref{eq:except_jump}, we have
\begin{equation}
\label{eq:iterDLP}
    b_i = \left( \Big(Q - \sum_{j=1}^{i-1} b_jp^jP\Big)_x \bmod \eps^{m_i+1}\right) \bigg/ \big((p^iP)_x \bmod \eps^{m_i+1}\big).
\end{equation}
The time complexity of this algorithm is logarithmic in the considered parameters.
More precisely, it has a time complexity of $\log(p)\log(n)$ (see Section \ref{sec:ecdlp} and Example \ref{ex:dlp_comp_1} in the appendix).

The above procedure reduces in polynomial time the discrete logarithm problem of elliptic curves over $R_k$ to the corresponding problem over $\F_q$.
Under the condition \eqref{eq:CoprimalityCondition}, one can lift the DLP from $\F_q$ to $R_k$ in polynomial time, thus the two DLP problems are polynomially equivalent.

Since the point operations over $R_k$ are more expensive than over $\F_q$, the above discussion shows that these elliptic curves are not optimal for DLP-based cryptographic protocols.	

\newpage
\appendix
\section{Explicit computations}
In this section, we give further details and results on some computations omitted for brevity in the paper. Most of the results are obtained using MAGMA \cite{bosma97magma}. All the referenced source code can be found at \cite{repo}.

\subsection{Efficient addition law}
\begin{proposition}
    \label{prop:shortsum_calc}
    Let us follow the notation of Proposition \ref{prop:shortsum}. The coefficients such that
    \begin{equation*}
        \begin{gathered}
            X_1 = g_1H_1 + g_2H_2, \quad Z_3 = g_1H_3 + g_2H_4, \quad Y_3 = H_1H_4 - H_2H_3
        \end{gathered}
    \end{equation*}
    are
    \begin{align*}
        &\begin{aligned}
            H_1 = & -a_1a_3X_1Z_2 - a_3^2Z_1Z_2 + a_1X_2Y_1 - a_2X_1X_2 - a_4X_2Z_1 - a_4X_1Z_2 - 3a_6Z_1Z_2 + Y_1Y_2, \\ 
            H_2 = & -a_2a_3^2Z_1Z_2 + a_1a_3a_4Z_1Z_2 - a_1^2a_6Z_1Z_2 - a_3^2X_1Z_2 + a_4^2Z_1Z_2 - 4a_2a_6Z_1Z_2 + a_3X_2Y_1 \\
                & - a_4X_1X_2 - 3a_6X_2Z_1 - 3a_6X_1Z_2, \\
            H_3 = & \ a_1^2X_1Z_2 + a_1a_3Z_1Z_2 + a_1Y_1Z_2 + a_2X_2Z_1 + a_2X_1Z_2 + a_4Z_1Z_2 + 3X_1X_2, \\
            H_4 = & \ a_1a_3X_1Z_2 + a_3^2Z_1Z_2 + a_2X_1X_2 + a_3Y_1Z_2 + a_4X_2Z_1 + a_4X_1Z_2 + 3a_6Z_1Z_2 + Y_1Y_2.
        \end{aligned}     
    \end{align*}
\end{proposition}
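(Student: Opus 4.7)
The task is to verify, as polynomial identities in $\Z[a_1,\dots,a_6][X_1,Y_1,Z_1,X_2,Y_2,Z_2]$, the three equations $X_3 = g_1H_1 + g_2H_2$, $Z_3 = g_1H_3 + g_2H_4$, and $Y_3 = H_1H_4 - H_2H_3$, where $(X_3, Y_3, Z_3)$ is produced by the Bosma--Lenstra addition law $+_{(0:1:0)}$ of \cite{bosma95} (with the two typo corrections recalled in Section \ref{sec:Notation}), and $g_1, g_2, H_1, \dots, H_4$ are the bihomogeneous bidegree-$(1,1)$ polynomials explicitly written down in Proposition \ref{prop:shortsum} and the statement above.

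The plan is as follows. First I would perform a dimensional consistency check: $g_1, g_2$ are visibly bihomogeneous of bidegree $(1,1)$, and by inspection so are each of $H_1, H_2, H_3, H_4$. Hence $g_1H_1 + g_2H_2$, $g_1H_3 + g_2H_4$, and $H_1H_4 - H_2H_3$ are all bihomogeneous of bidegree $(2,2)$, matching the bidegree of the Bosma--Lenstra expressions for $X_3^{(2)}, Y_3^{(2)}, Z_3^{(2)}$. Second, one writes out each Bosma--Lenstra component with the corrections, expands each of the three combinations above, and compares the coefficient of every monomial in the $X_i, Y_i, Z_i$. Since all coefficients lie in $\Z[a_1, \dots, a_6]$, the verification reduces to a finite collection of polynomial equalities in the $a_i$ over $\Z$, which can be checked term by term.

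The main obstacle is purely combinatorial: the Bosma--Lenstra formulas contain several dozen monomials each, and expanding products of bidegree-$(1,1)$ polynomials with many terms produces hundreds of intermediate monomials before cancellation, with coefficient polynomials in the $a_i$ of degree up to $4$. Although in principle a hand computation is feasible, in practice it is both tedious and error-prone. For this reason, the verification is most naturally delegated to a computer algebra system; the corresponding MAGMA \cite{bosma97magma} script that performs the symbolic expansion over $\Z[a_1,\dots,a_6]$ and checks all three identities componentwise is available at \cite{repo}.
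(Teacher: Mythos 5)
Your proposal is correct and matches the paper's approach: the paper likewise proves this proposition by direct symbolic expansion and comparison of the corrected Bosma--Lenstra formulas against the stated decomposition, delegating the mechanical verification to a MAGMA script in the accompanying repository. The preliminary bidegree consistency check you add is a sensible sanity check but does not change the substance of the argument.
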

\begin{proof}
    The explicit verification of the above equalities in Magma may be found in the file \code{short\_sum\_verification.magma}.
\end{proof}

\subsection{The polynomial \texorpdfstring{$\ff$}{f}}
\begin{prop}
    \label{prop:zfx_calc}
    Let $\ff \in \rcal[x]$ be the polynomial as defined in Proposition \ref{prop:zfx}. We have
    \begin{align*}
        \begin{aligned}
            \ff(x) \equiv& \ x^3 - a_1x^4 + (a_1^2 + a_2) x^5 - (a_1^3 + 2a_1a_2 + a_3)x^6 +\\
            & (a_1^4 + 3a_1^2a_2 + 3a_1a_3 + a_2^2 + a_4)x^7 - (a_1^5 - 4a_1^3a_2 - 6a_1^2a_3 - 3a_1a_2^2 - 3a_1a_4 - 3a_2a_3)x^8 + \\
            & (a_1^6 + 5a_1^4a_2 + 10a_1^3a_3 + 6a_1^2a_2^2 + 6a_1^2a_4 + 12a_1a_2a_3 + a_2^3 + 3a_2a_4 + 2a_3^2 + a_6)x^9 \bmod x^{10}.
        \end{aligned}
    \end{align*}
\end{prop}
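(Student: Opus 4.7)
The plan is to prove the identity by iterating the defining substitution from the proof of Proposition \ref{prop:zfx} and tracking coefficients modulo $x^{10}$. Write the Weierstrass equation as
\[ z = F(x,z) := x^3 - a_1 xz + a_2 x^2 z - a_3 z^2 + a_4 xz^2 + a_6 z^3, \]
and observe that $F(x,z) - x^3$ lies in the ideal $(xz, z^2)$. Consequently, substituting $z \mapsto F(x,z)$ on the right-hand side strictly raises the minimum total degree (in the grading $\deg(x) = \deg(z) = 1$) of every monomial that still involves $z$. After finitely many iterations, all surviving $z$-monomials have total degree at least $10$ and vanish modulo $x^{10}$, producing $\ff(x) \bmod x^{10}$.

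Concretely, I would initialize $z_0 = x^3$ and compute $z_{n+1} = F(x, z_n) \bmod x^{10}$ until stabilization. Since every substitution raises the minimum degree of the $z$-dependent part by at least one, and since $z_0$ already has degree $3$, at most a handful of iterations are required before all remaining $z$-terms have degree $\geq 10$. Collecting the coefficients of $x^i$ for $3 \leq i \leq 9$ in the stabilized expression yields the nine claimed coefficients. As an intrinsic correctness check, the truncated polynomial must satisfy $\ff(x) \equiv F\bigl(x, \ff(x)\bigr) \bmod x^{10}$, which is the defining functional equation and can be verified independently of the iteration order.

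The main obstacle is purely combinatorial bookkeeping: the coefficients grow quickly, since expanding $z^2$ and $z^3$ after each substitution produces multinomial contributions that collapse together at each degree. For example, the coefficient of $x^9$ receives contributions from the $x^3$-term substituted into $z$, from pairwise products like $(x^3)(-a_1 x^4 + \dots)$ arising in $-a_3 z^2$, and from the cubic $a_6 z^3$ evaluated at the leading order $z \approx x^3$. For this reason, the full symbolic expansion is delegated to the verification script in \cite{repo}, which both performs the iteration and checks the functional equation $\ff(x) \equiv F\bigl(x, \ff(x)\bigr) \bmod x^{10}$.
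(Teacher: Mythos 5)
Your proposal is correct and follows essentially the same route as the paper: both iterate the substitution $z \mapsto F(x,z)$ from the proof of Proposition \ref{prop:zfx}, argue termination modulo $x^{10}$ via the increasing degree of the $z$-dependent monomials, and delegate the explicit coefficient expansion to a script in \cite{repo}. The added check that the truncation satisfies the functional equation $\ff(x) \equiv F\bigl(x,\ff(x)\bigr) \bmod x^{10}$ is a sensible independent validation but does not change the method.
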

\begin{proof}
    The explicit expression for $\ff$ is obtained by applying the method described in the proof of Proposition \ref{prop:zfx}. Python scripts implementing that logic can be found in the files \code{zfx\_fast.py} (for the extended form) and \code{zfxred\_fast.py} (for the short form).
\end{proof}
\begin{remark}
Values of $\ff(x)$ when $k=30$ (extended form) and $k = 300$ (short form) may be found in \code{zfx\_stored\_30.magma} and \code{zfxred\_stored\_300.magma}, respectively.
We remark that these scripts can handle even $k=150$ (extended) and $k=2000$ (short) in a reasonable time on a commercial laptop.
However, the main purpose of these polynomials is to be employed for computing $\psi_i(n)$ (see below).
For this reason, the values $k=30$ and $k=300$ are considered enough.
\end{remark}
We observe that, when $\chr(\F_q) \nmid 6$, we can represent an elliptic curve in its short Weierstrass form, with $a_1=a_2=a_3=0$, $a_4=A$ and $a_6=B$.
With this notation, we obtain the same result as \cite[Prop. 11]{sala20znz}, i.e.
    $$\ff(x) \equiv x^3 + Ax^7 + Bx^9 \bmod x^{10}.$$

\subsection{Semi-linearity of the sum}
\begin{prop}
    \label{prop:sommemodulop2}
    Let $P_1 = (X_1:1:Z_1),P_2 = (X_2:1:Z_2) \in E^\infty$ be two points and $P_3 = P_1 +_{(0:1:0)} P_2$. Let also $I_P = \langle X_1^2, Z_1 \rangle$. Then
    \begin{align*}
        \begin{aligned}
            (P_3)_x \equiv \ & X_1 + X_2 + \big(a_1X_2 - a_2 X_2^2 + 2a_3 Z_2 - 2a_4 X_2Z_2 - 3a_6 Z_2^2\big)X_1 \bmod I_P.
        \end{aligned}     
    \end{align*}
\end{prop}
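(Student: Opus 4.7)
The plan is a direct calculation from the formulas of Proposition~\ref{prop:shortsum_calc}, specialized by $Y_1 = Y_2 = 1$, and then reduced modulo $I_P = \langle X_1^2, Z_1 \rangle$. The reduction is what makes the computation tractable: every surviving monomial carries at most one factor of $X_1$ and no $Z_1$, so each of the polynomials involved collapses to an $\rcal[X_2, Z_2]$-linear combination of $1$ and $X_1$.

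First, I would substitute $Y_i = 1$ in the explicit expressions for $g_1, g_2, H_1, H_2, H_3, H_4$ and reduce each of them modulo $I_P$. Each polynomial then takes the form $A + X_1\, B$ with $A, B \in \rcal[X_2, Z_2]$. Next, I would compute $X_3 = g_1 H_1 + g_2 H_2$ and $Y_3 = H_1 H_4 - H_2 H_3$ modulo $I_P$; since any product of two $X_1$-multiples vanishes modulo $I_P$, only the $X_1$-free part of one factor crossed with the $X_1$-part of the other contributes to the $X_1$-coefficient of the product, which simplifies the bookkeeping considerably.

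An inspection of the $X_1$-free parts shows that $Y_3$ reduces to $1 + a_1 X_2 + a_3 Z_2$ modulo $\langle X_1, Z_1\rangle$, which is a unit in $\rcal$ since $X_2, Z_2 \in \m$. Hence $Y_3$ is invertible modulo $I_P$; writing $Y_3 = A' + X_1 B'$ the expansion
\[
Y_3^{-1} \equiv (A')^{-1} - X_1\, B'\,(A')^{-2} \pmod{I_P}
\]
terminates after these two terms. Multiplying $(P_3)_x = X_3 \cdot Y_3^{-1}$ and separating the $X_1^0$ and $X_1^1$ contributions will then yield the claim: the constant-in-$X_1$ part must be $X_2$, which serves as a sanity check since at $X_1 = Z_1 = 0$ one has $P_1 = \ocal$ and hence $P_3 = P_2$, while the linear-in-$X_1$ part produces exactly the bracketed expression of the statement.

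The main obstacle is not conceptual but combinatorial: the cancellations between $H_1 H_4$ and $H_2 H_3$ involve several competing monomials of the form $a_i a_j X_1 X_2 Z_2$ and $a_i a_j X_1 Z_2^2$, which must combine in the predicted way. Consistently with the other entries of this appendix, a symbolic verification in Magma from the repository \cite{repo} is the safest way to close the argument.
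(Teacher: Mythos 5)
Your plan is exactly the paper's proof: substitute $Y_1=Y_2=1$, reduce the explicit $g_i$, $H_i$ of Proposition~\ref{prop:shortsum_calc} modulo $I_P$, assemble $X_3$ and $Y_3$, observe that $Y_3$ is a unit modulo $I_P$ with a short explicit inverse, and multiply, with the final bookkeeping delegated to a symbolic (Magma) verification. The only cosmetic difference is that the paper evaluates the appendix formulas with the roles of $P_1$ and $P_2$ interchanged, so its intermediate expressions (e.g.\ $Y_3 \equiv 1 + a_1X_1 \bmod I_P$) look different from yours, but the resulting $(P_3)_x$ is the same.
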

\begin{proof}
    Notice that, in accordance with previous notations, $(P_3)_x$ is the $x$ coordinate of $P_3$ in standard form while $X_3$ and $Y_3$ are the results of the addition as defined in \cite{bosma95}. By Proposition \ref{prop:shortsum} we have
    \begin{align*}
        \begin{aligned}
            g_1 &\equiv X_1 + X_2 + a_1X_1X_2 + a_3X_1Z_2 &\bmod I_P,  \\ 
            g_2 &\equiv Z_2 &\bmod I_P,
        \end{aligned}
    \end{align*}
    while Proposition \ref{prop:shortsum_calc} gives
    \begin{align*}
        \begin{aligned}
            H_1 &\equiv 1 - a_2X_1X_2 - a_4Z_2X_1 + a_1X_1 &\bmod I_P, \\
            H_2 &\equiv -a_4X_1X_2 - 3a_6X_1Z_2 + a_3X_1 &\bmod I_P, \\
            H_3 &\equiv 3X_1X_2 + a_2X_1Z_2 &\bmod I_P, \\
            H_4 &\equiv 1 + a_2X_1X_2 + a_4X_1Z_2 &\bmod I_P.
        \end{aligned}     
    \end{align*}
    Computing the addition we obtain
    \begin{align*}
        \begin{aligned}
            X_3 &\equiv X_1 + X_2 - a_2X_2^2X_1 - 2a_4X_1X_2Z_2 + 2a_1X_1X_2 + 2a_3X_1Z_2 - 3a_6X_1Z_2^2 &\bmod I_P, \\
            Y_3 &\equiv 1 + a_1X_1 &\bmod I_P.
        \end{aligned}     
    \end{align*}
    From here it is clear that the inverse modulo $I_P$ of $Y_3$ is $1-a_1X_1$. Multiplying it by $X_3$ we obtain the required expression for $(P_3)_x$. Explicit computations can be found in \code{inspection.magma}.
\end{proof}

\begin{prop}
    \label{prop:inspection_eps}
    Let $P_1 = (X_1:1:Z_1), P_2 = (X_2:1:Z_2) \in E^\infty$ be two points and $P_3 = P_1 +_{(0:1:0)} P_2$.
    Let also $m = \min\{\vi(P_1), \vi(P_2) \}$.
    Then we have
    $$ (P_3)_x \equiv X_1 + X_2 \bmod \eps^{m+1}.$$
\end{prop}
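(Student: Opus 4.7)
The plan is to derive this statement directly from Proposition \ref{prop:sommemodulop2}, bounding the valuations of all the terms that are not $X_1+X_2$. First I would note that, since $P_1, P_2 \in E^\infty$, both $X_1, X_2$ lie in $\m = (\eps)$, hence $\vi(X_i) \geq 1$ and in particular $m \geq 1$. Moreover, by Proposition \ref{prop:zfx} we have $Z_i = \ff(X_i)$ with $x^3 \mid \ff(x)$, which gives $\vi(Z_i) \geq 3\vi(X_i) \geq 3m$. These two facts will be used systematically to discard higher-order terms.

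Next I would invoke Proposition \ref{prop:sommemodulop2}, which yields
\[
(P_3)_x \equiv X_1 + X_2 + \big(a_1X_2 - a_2 X_2^2 + 2a_3 Z_2 - 2a_4 X_2Z_2 - 3a_6 Z_2^2\big)X_1 \bmod \langle X_1^2, Z_1 \rangle.
\]
Since $\vi(X_1^2) \geq 2m$ and $\vi(Z_1) \geq 3m$, the ideal $\langle X_1^2, Z_1 \rangle$ is contained in $(\eps^{2m}) \subseteq (\eps^{m+1})$ (using $m \geq 1$), so the congruence automatically holds modulo $\eps^{m+1}$. It then remains to check that each monomial of the correction factor, once multiplied by $X_1$, has $\eps$-valuation at least $m+1$: the leading contribution $a_1 X_1 X_2$ has valuation $\geq \vi(X_1) + \vi(X_2) \geq 2m \geq m+1$, while every other term either carries an additional factor of $X_2$ (raising the valuation by at least $m$) or a factor of $Z_2$ (raising it by at least $3m$).

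Combining these bounds shows the entire correction reduces to zero modulo $\eps^{m+1}$, leaving exactly $X_1 + X_2$, which is the claim. There is no significant obstacle here: the only care required is to observe that the valuation estimate is uniform regardless of which of $P_1, P_2$ achieves the minimum $m$, since Proposition \ref{prop:sommemodulop2} singles out the variables of $P_1$ but we use only the bounds $\vi(X_i) \geq m$ and $\vi(Z_i) \geq 3m$, which hold for both indices. The result is thus essentially a corollary of Propositions \ref{prop:zfx} and \ref{prop:sommemodulop2}, together with a straightforward valuation count.
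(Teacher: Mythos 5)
Your proposal is correct and follows essentially the same route as the paper: both reduce the claim to Proposition \ref{prop:sommemodulop2} and then observe that $I_P=\langle X_1^2,Z_1\rangle$ and every monomial of the correction term have $\eps$-valuation at least $m+1$, using $\vi(X_i)\geq m\geq 1$ and $x^3\mid\ff(x)$ for the $z$-coordinates. Your write-up is merely more explicit about the valuation bounds than the paper's one-line argument.
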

\begin{proof}
    It follows from Proposition \ref{prop:sommemodulop2}, by noting that $\eps^{m+1} | X_1^2$ and $\eps^{m+1} | Z_1$, hence $I_P \subseteq \langle \eps^{m+1} \rangle$ and $\eps^{m+1}$ divides both $X_1X_2$ and $X_1Z_2$, so the third term of the expression of $(P_3)_x$ given by Proposition \ref{prop:sommemodulop2} is $0 \bmod \eps^{m+1}$.
\end{proof}

\subsection{Computing \texorpdfstring{$\psi_i(n)$}{psii(n)} } \label{sec:coeff}
Let $P = (X:1:\ff(X)) \in E^{\infty}$.
Thanks to Proposition \ref{prop:zfx_calc} we know the symbolic expression of $\ff$ for a fixed $k$. The $i$-th multiplication polynomial $\psi_i(n)$ is the coefficient of $X^i$ in $(nP)_x$, which by Theorem \ref{teo:psi_poli} it is a polynomial of degree $i$ in $n$. 

To compute it we derive the symbolic expression of $nP$ as a function of $X$ for $1 \leq n \leq i+1$, and for every monomial expression in $a_1, \dots, a_6$, we fit a degree-$i$ polynomial on its coefficients. This is a standard technique for Faulhaber formulas, see \cite{gunderson10}.
Once we have the interpolated expression for $\psi_i(n)$, we can also validate it: we just check that $(n-1)P + P = np$, where $(n-1)P$ and $nP$ are expressed through the previously computed $\psi_i(n)$.
We remark that on all the computations regarding $\psi_i(n)$ we may assume $k=i+1$.

A script performing this operation can be found in \code{ind.magma} and \code{indred.magma}. The parameter $k$ can be modified, while setting \code{proof=true;} verifies every computed $\psi_i$. The first few values of $\psi_i$ in the extended form can be found in \code{psi\_stored\_30.magma}. The first ones, rearranged, are reported below:
\begin{align*}
    \begin{aligned}
        \psi_1 &= n, \\ 
        \psi_2 &= \binom{n}{2}a_1, \\
        \psi_3 &= \binom{n}{3}a_1^2 - 2\binom{n+1}{3}a_2, \\
        \psi_4 &= \binom{n}{4}a_1^3 - \binom{n+1}{3}(2n-3)a_1a_2 + \frac{n(n^3-1)}{2}a_3. \\
    \end{aligned}     
\end{align*}
The first few values for the short Weirstrass form can be found in \code{psired\_stored\_222.magma}.
The first nonzero values are reported below:
\begin{align*}
    \begin{aligned}
        \psi_1 &= n, \\ 
        \psi_5 &= -\frac{2}{5} An(n^4 - 1), \\
        \psi_7 &= -\frac{3}{7} Bn(n^6 - 1), \\
        \psi_9 &= \frac{2}{15} A^2 (n^4-1)(n^4 - 5). \\
    \end{aligned}     
\end{align*}

\subsection{The exceptional case} \label{sec:exeption_prob}
Due to the structure of our rings $R_k$, a curve $E$ is exceptional if and only if $\pi\big(\psi_p(p)\big) = 0$.
For this reason, for every prime $p$ we count the zeros of $\psi_p(p)$ among the non-singular choices of the curve coefficients.
For computational reasons, we assume $p>5$ so we work with the short Weierstrass form.
The code can be found in \code{except\_coeff.magma}, while results for primes up to $p=79$ can be found in Table \ref{tbl:rates}.
\begin{table}[ht]
    \begin{tabular}{l|l||l|l||l|l||l|l}
    $p$ & rate & $p$ & rate & $p$ & rate & $p$ & rate \\ \hline
    5   & 1/5  & 7   & 1/7  & 11  & 2/11 & 13  & 1/13 \\
    17  & 2/17 & 19  & 2/19 & 23  & 3/23 & 29  & 3/29 \\
    31  & 3/31 & 37  & 1/37 & 41  & 4/41 & 43  & 2/43 \\
    47  & 5/47 & 53  & 3/53 & 59  & 6/59 & 61  & 3/61 \\
    67  & 2/67 & 71  & 7/71 & 73  & 2/73 & 79  & 5/79
    \end{tabular}
    \vspace{0.2cm}
    \caption{Rate of exceptional cases.\label{tbl:rates}}
\end{table}

\begin{example}
    \label{ex:strange_ex1_calc}
    Let $E$ be the elliptic curve over $R_k$ as defined in Example \ref{ex:strange_ex1}.
    In \code{ex1.magma} we perform some computations to verify our claims. First of all, we directly compute the coefficients $\psi_3(3)$ and $\psi_9(3)$, which as shown in Theorem \ref{teo:grp_struct_exceptional} are enough to determine the group structure.
    After that, we pick random points and show how their minimal degree changes when multiplied by $3$.
    Finally, we compute the trajectory of $g_m = \big(\eps^m:1:\ff(\eps^m)\big)$ for every $m \in \acal$, namely every $m$ not contained in the trajectory of any $g_n$ for $n < m$.
\end{example}
\begin{lemma}
    \label{lem:psi_dt_23_calc}
    Let $p \leq 13$ and $E$ be an elliptic curve defined over $R_k$, such that $\vi\big(\psi_p(p)\big) = d > 0$. Then conditions \ref{c1}, \ref{c2}, and \ref{c3} of Theorem \ref{teo:grp_struct_exceptional} are always satisfied.
\end{lemma}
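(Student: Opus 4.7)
The plan is to verify the three conditions by direct symbolic computation for each prime $p \in \{2,3,5,7,11,13\}$, exploiting the computational framework developed in Section \ref{sec:coeff} of the Appendix. Thanks to Remark \ref{rem:deg_influence}, computing $\psi_i(p)$ for $i < p^2$ only requires working modulo $X^{p^2}$, so we may assume $k = p^2$ throughout, making the computation finite.

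For $p \in \{2,3\}$ we use the general extended Weierstrass form, treating $a_1,\dots,a_6$ as formal variables. For $p \in \{5,7,11,13\}$, since $\chr(\F_q) \nmid 6$, we reduce to the short Weierstrass form with coefficients $A,B$, as in Section \ref{sec:coeff}. In both settings, we first compute the symbolic expression of $\ff$ up to $X^{p^2}$ via Proposition \ref{prop:zfx_calc}, and then the multiplication polynomials $\psi_i(n) \in \Q[a_1,\dots,a_6][n]$ (or $\Q[A,B][n]$) for $1 \le i < p^2$ using the interpolation procedure of Section \ref{sec:coeff}.

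Once these polynomials are tabulated, verifying the three conditions reduces to polynomial checks. For $(C_2)$, for each $i < p^2$ coprime to $p$ we substitute $n = p$ into $\psi_i$ and check that the result vanishes identically as a polynomial in the curve coefficients; this is the algebraic reflection of Corollary \ref{cor:psi_i_zero} extended beyond its immediate range $i < p$. For $(C_1)$, we compute $\psi_{p^2}(p)$ and verify that in every case where $\psi_p(p)$ is non-invertible, i.e. reduces to $0$ modulo the maximal ideal of the coefficient ring, the polynomial $\psi_{p^2}(p)$ still has a constant (non-nilpotent) term as a polynomial in the $a_i$'s. For $(C_3)$, for each $i < p^2$ with $p \mid i$ we exhibit an explicit expression $\psi_i(p) = \alpha_i \cdot \psi_p(p)$ in the polynomial ring of coefficients, obtained via polynomial division.

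The main obstacle is computational rather than conceptual: for $p = 13$ one must handle $\psi_i(13)$ for $i < 169$, using a ring with $k = 169$, which pushes the symbolic addition-law machinery to its practical limit. The reduction to short Weierstrass form for $p \ge 5$ is essential here, as it keeps the polynomial ring to two variables $A,B$. The MAGMA scripts performing these verifications are provided at \cite{repo}; running them for each $p \in \{2,3,5,7,11,13\}$ confirms that $(C_1)$, $(C_2)$, $(C_3)$ hold, completing the proof.
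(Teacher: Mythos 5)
Your treatment of conditions $(C_2)$ and $(C_3)$ matches the paper's: both are settled by a single symbolic computation (identical vanishing of $\psi_i(p)$ for $(i,p)=1$, and exhibiting $\psi_i(p)$ as a polynomial multiple of $\psi_p(p)$ for $p \mid i$), and these properties survive every specialization of the coefficients. The gap is in your verification of $(C_1)$. You propose to check that ``$\psi_{p^2}(p)$ still has a constant (non-nilpotent) term as a polynomial in the $a_i$'s'' whenever $\psi_p(p)$ is non-invertible. This is not a valid check for two reasons. First, a nonzero constant term of a formal polynomial does not survive specialization: $1+A$ has constant term $1$ but vanishes at $A=-1$. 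Second, and worse, $\psi_{p^2}(p)$ has \emph{no} constant term at all --- in the short form every nonzero $\psi_i$ is a multiple of $A$ or $B$ (cf.\ $\psi_5 = -\tfrac{2}{5}An(n^4-1)$, $\psi_9 = \tfrac{2}{15}A^2(n^4-1)(n^4-5)$ in Section \ref{sec:coeff}), so your criterion would simply fail. Meanwhile, the quantifier ``in every case where $\psi_p(p)$ is non-invertible'' ranges over all $q=p^e$, all $k$, and all coefficient specializations with $\psi_p(p)\in\m$; this is an infinite family and cannot be enumerated by the finite symbolic computation you set up.

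What is missing is the paper's key idea for $(C_1)$: a single ideal-membership certificate involving the discriminant. For $p\in\{2,3\}$ one verifies symbolically that $\Dt_E \in \langle \psi_p(p), \psi_{p^2}(p)\rangle$ in $\Z[a_1,\dots,a_6]$; since ideal membership is preserved under specialization, $\Dt_E\in R_k^*$ (non-singularity) together with $\psi_p(p)\in\m$ forces $\psi_{p^2}(p)\in R_k^*$. For $5\le p\le 13$ the analogous certificate is $\Dt_E \in \langle A,B\rangle \subseteq \sqrt{\langle \psi_p(p), \psi_{p^2}(p)\rangle}$, whence $\psi_p(p)$ and $\psi_{p^2}(p)$ cannot both lie in $\m$. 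Replacing your constant-term check with this certificate repairs the argument; the rest of your proposal is sound.
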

\begin{proof}
    The cases $p \in \{2,3\}$ can be found in \code{proof\_23.magma}. Conditions \ref{c2} and \ref{c3} can be directly computed. For condition \ref{c1} is enough to check that for both $p=2,3$ we symbolically have $\Dt_E \in \langle \psi_p(p), \psi_{p^2}(p)\rangle$ (where $\Dt_E$ is the discriminant of the curve $E$), and since $\psi_p(p) \not\in R_k^*$ and $\Dt_E \in R_k^*$, we must have $\psi_{p^2}(p) \in R_k^*$. \\
    Notice that for $5 \leq p \leq 13$ we can restrict ourselves to work with a short Weierstrass form. Again \ref{c2} and \ref{c3} are directly computed. \ref{c1} is similarly implied by verifying that
    \[ \Dt_E \in \langle A, B \rangle \subseteq \sqrt{\langle \psi_p(p), \psi_{p^2}(p)\rangle}, \]
    thus $\psi_p(p)$ and $\psi_{p^2}(p)$ cannot both belong to $\m$. \\
    Explicit computations can be found in \code{proof\_short.magma}.
\end{proof}

\subsection{ECDLP} \label{sec:ecdlp}
We provide here further details about the algorithm introduced in Section \ref{subs:ecdlp}. As already shown, we must compute
\[ b_i = \left( \Big(Q - \sum_{j=1}^{i-1} b_jp^jP\Big)_x \bmod \eps^{m_i+1}\right) \bigg/ \big((p^iP)_x \bmod \eps^{m_i+1}\big). \]
To entirely determine $n$, we compute the $\log_p(n)$ digits $b_i$ of its base-$p$ representation.
If we store the partial sum $S_i = \sum_{j=1}^{i} b_jp^jP$ and the point $p^iP$, at every step we can compute $b_i$ using only two point multiplication, since $S_{i+1} = S_{i} + b_{i+1}p^{i+1}P$ and $p^{i+1}P = p(p^{i}P)$.
We multiply by $b_i<p$ and $p$ respectively, so these multiplication have complexity $\log(p)$.
The inversion is a field operation, so the whole algorithm has complexity $\log(p)\log(n)$.

\begin{example}
    \label{ex:dlp_comp_1}
    Let us consider $q=p=2$ and $k=10$, namely the ring $\rcal=\F_2[x]/(x^{10}) \simeq \F_2[\eps]$, and the curve $E$ defined over $\rcal$ by
    $a_1 = \eps^8 + \eps^7 + \eps^6 + \eps^3 + \eps^2 + 1$, 
    $a_2 = \eps^9 + \eps^8 + \eps^6 + \eps^4 + \eps^3 + \eps^2 + \eps + 1$, 
    $a_3 = \eps^9 + \eps^8 + \eps^4 + \eps^3 + \eps + 1$, 
    $a_4 = \eps^7 + \eps^4 + \eps^2$ and 
    $a_6 = \eps^9 + \eps^7 + \eps^5 + \eps^3$. 
    We address the DLP given by $Q = nP$, with
    \[
    P_x = \eps^9 + \eps^7 + \eps^6 + \eps^3 + \eps, \quad Q_x = \eps^6 + \eps^5 + \eps^4 + \eps^3 + \eps.
    \]
    At the first iteration, $m_0 = \vi(P) = 1$. By \eqref{eq:iterDLP} we compute $b_0 = 1$ and
    $(Q-S_0)_x = \eps^7 + \eps^6 + \eps^4$.
    At the second iteration, $m_1 = 2$ and we obtain $b_1 = 0$. Hence we do not update the sum and get $(Q-S_1)_x = (Q-S_0)_x$. Similarly, we obtain $b_2 = b_3 = 1$, which imply $n =$ \texttt{1101}$_\text{2}$ $ = 13$. 
\end{example}
\begin{example}
    \label{ex:dlp_comp_2}
    Let us consider $q=p=3$ and $k=29$, namely the ring $\rcal=\F_3[x]/(x^{29}) \simeq \F_3[\eps]$, and the curve $E$ defined over $\rcal$ by
    \begin{equation*}
        \begin{gathered}
            a_1 = 2\eps^{27} + \eps^{26} + 2\eps^{23} + 2\eps^{22} +\eps^{21} + 2\eps^{20} +\eps^{19} + 2\eps^{16} + \\
            2\eps^{13} + \eps^{10} + 2\eps^9 + 2\eps^8 +\eps^6 +\eps^4 + 2\eps^3 + 2\eps^2 +\eps + 2,
        \end{gathered}
    \end{equation*}
    \begin{equation*}
        \begin{gathered}
            a_2 =\eps^{27} + 2\eps^{25} +\eps^{24} + 2\eps^{22} +\eps^{19} +\eps^{18} +\eps^{16} +\eps^{15} +\eps^{13} + 2\eps^{12} \\
            + 2\eps^{11} +\eps^{10} +\eps^9 +\eps^8 + 2\eps^7 + 2\eps^6 + 2\eps^4 + 2\eps^3 +\eps^2 +\eps,
        \end{gathered}
    \end{equation*}
    \begin{equation*}
        \begin{gathered}
            a_3 = 2\eps^{28} +\eps^{27} +\eps^{26} + 2\eps^{24} +\eps^{23} +\eps^{22} +\eps^{21} +\eps^{20} +\eps^{19} + 2\eps^{17} \\ 
            + \eps^{16} +\eps^{14} +\eps^{13} +\eps^{12} +\eps^{11} +\eps^8 +\eps^6 +\eps^4 +\eps^2 + 1,
        \end{gathered}
    \end{equation*}
    \begin{equation*}
        \begin{gathered}
            a_4 =\eps^{28} +\eps^{27} + 2\eps^{22} + 2\eps^{21} + 2\eps^{20} +\eps^{19} +\eps^{18} +\eps^{17} +\eps^{15} +\eps^{14} \\ 
            + \eps^{13} + 2\eps^{12} + 2\eps^{11} +\eps^9 + 2\eps^8 + 2\eps^7 +\eps^6 + 2\eps^5 + 2\eps^4 + 2,
        \end{gathered}
    \end{equation*}
    \begin{equation*}
        \begin{gathered}
            a_6 = 2\eps^{28} + 2\eps^{27} + 2\eps^{26} +\eps^{25} +\eps^{24} + 2\eps^{23} + \eps^{22} +\\
            \eps^{21} + 2\eps^{19} + \eps^{18} + 2\eps^{17} + 2\eps^{16} +\eps^{15} + \eps^{13} + \\
             2\eps^{11} +\eps^{10} +\eps^8 + 2\eps^7 + 2\eps^6 + 2\eps^4 + 2\eps^3 +\eps^2 + 1.
        \end{gathered}
    \end{equation*}
    We address the DLP given by $Q = nP$, with
    \begin{equation*}
        \begin{gathered}
            P_x = 2\eps^{28} + 2\eps^{24} + 2\eps^{23} + 2\eps^{22} +\eps^{21} +\eps^{20} + 2\eps^{18} + \eps^{17} + \\
            2\eps^{16} + 2\eps^{15} +\eps^{14} + 2\eps^{13} +\eps^{12} +\eps^8 +\eps^7 + 2\eps^2 + 2\eps
        \end{gathered}
    \end{equation*}
    and
    \begin{equation*}
        \begin{gathered}
            Q_x =\eps^{27} + 2\eps^{25} +\eps^{24} + 2\eps^{23} + 2\eps^{22} + 2\eps^{20} +\eps^{19} + 2\eps^{18}  \\
            +\eps^{15} +\eps^{12} + 2\eps^{11} +\eps^{10} + 2\eps^9 + 2\eps^8 + 2\eps^6 + 2\eps^5 + 2\eps^3 +\eps
        \end{gathered}
    \end{equation*}
    At the first iteration, $m_0 = \vi(P) = 1$. We compute $b_0 = 2$ and the residual
    \begin{equation*}
        \begin{gathered}
            (Q-S_0)_x = 2\eps^{28} + 2\eps^{27} + 2\eps^{25} +\eps^{24} + 2\eps^{21} +\eps^{20} +\eps^{18} \\ +\eps^{17} +\eps^{16} + 2\eps^{15} + 2\eps^{12} + 2\eps^{11} +\eps^{10} + 2\eps^9
        \end{gathered}
    \end{equation*}
    At the second iteration, $m_1 = 3$ and we obtain $b_1 = 0$. Hence we do not update the sum and get $(Q-S_1)_x = (Q-S_0)_x$. Similarly, we obtain $b_2 = b_3 = 1$, which imply $n =$ \texttt{1102}$_\text{3}$ $ = 38$. 
\end{example}
The implemented algorithm, as well as the actual recovery of $n$ for random choices of $E$ and $p$, can be found in \code{d\_log.magma}.
	
\bibliographystyle{plain}
\bibliography{refs}

\begin{thebibliography}{10}

\bibitem{bern2011}
Daniel~J Bernstein and Tanja Lange.
\newblock A complete set of addition laws for incomplete edwards curves.
\newblock {\em Journal of Number Theory}, 131(5):858--872, 2011.

\bibitem{bosma97magma}
Wieb Bosma, John Cannon, and Catherine Playoust.
\newblock The magma algebra system {I}: The user language.
\newblock {\em Journal of Symbolic Computation}, 24(3-4):235--265, 1997.

\bibitem{bosma95}
Wieb Bosma and Hendrik~W. Lenstra.
\newblock Complete systems of two addition laws for elliptic curves.
\newblock {\em Journal of Number theory}, 53(2):229--240, 1995.

\bibitem{graham}
Ronald~L Graham, Donald~E Knuth, Oren Patashnik, and Stanley Liu.
\newblock Concrete mathematics: a foundation for computer science.
\newblock {\em Computers in Physics}, 3(5):106--107, 1989.

\bibitem{gunderson10}
David~S Gunderson and Kenneth~H Rosen.
\newblock {\em Handbook of Mathematical Induction}.
\newblock CRC Press LLC, Boca Raton, Florida US, 2010.

\bibitem{repo}
Riccardo Invernizzi and Daniele Taufer.
\newblock Elliptic curves over local rings.
\newblock \url{https://github.com/r98inver/ec-local-rings}, 2023.

\bibitem{sign}
Don Johnson, Alfred Menezes, and Scott Vanstone.
\newblock The elliptic curve digital signature algorithm (ecdsa).
\newblock {\em International Journal of Information Security}, 1:36--63, 2001.

\bibitem{Koblitz}
Neal Koblitz.
\newblock Modular elliptic curves and fermat’s last theorem.
\newblock {\em Mathematics of computation}, 48(177):203--209, 1987.

\bibitem{lenstra86}
Hendrik~W. Lenstra.
\newblock Elliptic curves and number-theoretic algorithms.
\newblock {\em Proceedings of the International Congress of Mathematicians},
  1986:99--120, 1986.

\bibitem{DH}
Ralph~C. Merkle.
\newblock Secure communications over insecure channels.
\newblock {\em Communications of the ACM}, 21(4):294--299, 1978.

\bibitem{Miller}
Victor~S. Miller.
\newblock Use of elliptic curves in cryptography.
\newblock {\em Advances in Cryptology — CRYPTO ’85 Proceedings},
  218:417--426, 1985.

\bibitem{sala20znz}
Massimiliano Sala and Daniele Taufer.
\newblock The group structure of elliptic curves over {Z/NZ}.
\newblock {\em arXiv preprint arXiv:2010.15543}, 2020.

\bibitem{salataufer}
Massimiliano Sala and Daniele Taufer.
\newblock Elliptic loops.
\newblock {\em Journal of Pure and Applied Algebra}, 227(12), 2023.

\bibitem{sala23surv}
Massimiliano Sala and Daniele Taufer.
\newblock A survey on the group of points arising from elliptic curves with a
  weierstrass model over a ring.
\newblock {\em International Journal of Group Theory}, 12(3):177--196, 2023.

\bibitem{SatohAraki}
Takakazu Satoh and Kiyomichi Araki.
\newblock Fermat quotients and the polynomial time discrete log algorithm for
  anomalous elliptic curves.
\newblock {\em Commentarii Mathematici Universitatis Sancti Pauli},
  47(1):81--92, 1998.

\bibitem{Shparlinski}
Igor~E. Shparlinski.
\newblock Pseudorandom number generators from elliptic curves.
\newblock {\em Contemporary Mathematics}, 477:121--142, 2009.

\bibitem{silverman09}
Joseph~H. Silverman.
\newblock {\em The arithmetic of elliptic curves}, volume 106.
\newblock Springer, Berlin, 2009.

\bibitem{Smart}
Nigel~P. Smart.
\newblock The discrete logarithm on elliptic curves of trace one.
\newblock {\em Journal of Cryptology}, 12:193--196, 1999.

\bibitem{Fermat}
Andrew Wiles.
\newblock Modular elliptic curves and fermat’s last theorem.
\newblock {\em Annals of Mathematics}, 142(3):443--551, 1995.

\end{thebibliography}
	
\end{document}